\documentclass[reqno,oneside,12pt]{amsart}

\usepackage[T1]{fontenc}
\usepackage{times,mathptm,caption}
\usepackage{amssymb,enumerate}

\usepackage{hyperref}
\hypersetup{
    colorlinks,
    citecolor=black,
    filecolor=black,
    linkcolor=red,
    urlcolor=black
}

\theoremstyle{plain}

\newtheorem{thm}{Theorem}[section]
\newtheorem{cor}[thm]{Corollary}

\newtheorem{lem}[thm]{Lemma}
\newtheorem{proposition-principale}[thm]{Proposition principale}
\newtheorem{thm-principal}{Main Theorem}

\newtheorem{mthm}{Theorem}

\theoremstyle{definition}

\newtheorem{eg}[thm]{Example}
\newtheorem{rem}[thm]{Remark}

\newenvironment{defi-G}
{\noindent{\bf Definition.}\it}{\\}

\newenvironment{thm-M}
{\noindent{\bf Main Theorem.}\it }{}

\newenvironment{thm-C}
{\noindent{\bf Classification Theorem.}\it }{}

\newenvironment{thm-A}
{\noindent{\bf Theorem A.}\it}{\\ }

\newenvironment{thm-B}
{\noindent{\bf Theorem B.}\it}{\\ }

\newenvironment{thm-BB}
{\noindent{\bf Theorem B'.}\it}

\def\C{\mathbf{C}}
\def\R{\mathbf{R}}

\def\Z{\mathbf{Z}}

\def\Fp{{\mathbf{F}}_p}
\def\Fq{{\mathbf{F}}_q}
\def\bfk{{\mathbf{k}}}

\def\F{{\mathsf{F}}}

\def\Aut{{\sf{Aut}}}
\def\Out{{\sf{Out}}}
\def\Inn{{\sf{Inn}}}

\def\haar{{\mathrm{Haar}}}
\def\mes{{\mathrm{m}}}

\def\Rep{{\sf{Hom}}}
\def\Mod{{\sf{Mod}}}
\def\Epi{{\sf{Epi}}}
\def\Epitop{{\sf{Epi}}} 
\def\Epifin{{\sf{Epi}^{\#}}}
\def\Red{{\sf{Red}}}

\def\Redtop{{\sf{Red}}}

\def\card{{\mathrm{card}}}
\def\inc{{\mathrm{ic}}}
\def\lc{{\mathrm{cl}}}
\def\rg{{\mathrm{d}}}

\def\GL{{\sf{GL}}}
\def\SO{{\sf{SO}}}
\def\OO{{\sf{O}}}

\def\UU{{\sf{U}}}

\def\Bij{{\sf{S}}}
 
\def\SL{{\sf{SL}}\,}
\def\Sp{{\sf{Sp}}\,}

\def\tr{{\sf{tr}}}

\def\det{{\sf{det}}}

\newcommand{\Id}{{\rm Id}}

\newcommand{\grouptop}[1]{\left< #1 \right>_{top}}

\addtocounter{section}{0}             
\numberwithin{equation}{section}       

\begin{document}

\setlength{\baselineskip}{0.56cm}        
%
%
\title[$\Aut(\F_n)$-action on $\Rep(\F_n;G)$]
{Rigidity of the dynamics of $\Aut(\F_n)$ on representations into  a compact group}
\date{2025/2026}
\author{S. Cantat, C. Dupont, F. Martin-Baillon}
\address{CNRS, IRMAR - UMR 6625 \\ 
Universit{\'e} de Rennes 
\\ France}
\email{serge.cantat@univ-rennes.fr}
\email{christophe.dupont@univ-rennes.fr}
\email{florestan.martin-baillon@mis.mpg.de}
\thanks{{\footnotesize{The research activities of the authors are partially funded by the European Research Council (ERC GOAT 101053021 and  ERC PosLieRep 101018839).}}
}

%
%

%
%

%
%

\begin{abstract}
Let $G$ be a compact Lie group. Let $\F_n$ be the free group of rank~$n$. 
We describe the orbits of $\Aut(\F_n)$ on $\Rep(\F_n;G)$ when $n$ is sufficiently large. 
The dynamics stabilizes: orbit closures and invariant probability measures are algebraic, as in Ratner's 
theorems. 

\vspace{0.15cm}

\noindent{\sc{R\'esum\'e.}} Soit $G$ un groupe de Lie compact. Soit $\F_n$ le groupe libre sur~$n$ lettres. 
Nous  décrivons les orbites de l'action du groupe $\Aut(\F_n)$ sur $\Rep(\F_n;G)$ pour $n$ suffisamment grand. 
Une forme de stabilité apparaît, les adhérences d'orbites et les mesures de probabilité invariantes   étant algébrique, comme dans les théorèmes de Ratner. 
 \end{abstract}

\maketitle

\setcounter{tocdepth}{1}

\tableofcontents

\vfill




\pagebreak

\section{Introduction}

\subsection{Nielsen moves} Let $ \F_n=\langle a_1, \ldots, a_n\, \vert \, \emptyset\rangle$ be the free group on $n$ generators. 
Given some group $G$, we denote by $\Rep(\F_n; G)$ the set of homomorphisms $\F_n\to G$. 
A homomorphism 
$\rho\colon \F_n\to G$ is uniquely determined by the tuple $(\rho(a_1), \ldots, \rho(a_n))$, which provides an identification $\Rep(\F_n;G)=G^n$.

The group of automorphisms of $\F_n$ acts on $\Rep(\F_n;G)$ by precomposition; more precisely, if $\varphi\in \Aut(\F_n)$ and $\rho\in \Rep(\F_n;G)$, then $\varphi_*\rho=\rho\circ \varphi^{-1}$. Under the identification $\Rep(\F_n;G)\simeq G^n$, this action coincides with the action of {\bf{Nielsen moves}} that is, with the action generated by the group of permutations on $n$ letters and  the following elementary transformations

\begin{align}
	\label{eq:elementary}
 (g_1 , \dots , g_i  , \dots ,  g_n ) &\mapsto (g_1 , \dots , g_i^{-1}  , \dots , g_n ) \\
(g_1 , \dots , g_i  , \dots ,  g_n ) &\mapsto (g_1 , \dots , g_i g_j  , \dots , g_n )  
 \end{align}
for pairs $(i,j) \in \{ 1 , \dots , n \}$ with $i \neq j$. 
Our goal is to understand this action when $G$ is a compact Lie group: we shall see that the main dynamical features of this action ``stabilize'' as $n$ becomes large. 

\subsection{Stabilization of the dynamics}
\label{par:intro_main_result}
  To state our main result we need the following notation.
Let $H$ be a closed subgroup of $G$. Then $H$ is a Lie subgroup of $G$; we denote by $H^\circ$ its neutral component
 and by $ H^{\#} $ the finite group $H/H^\circ$. 
Let $\pi : H \to H^{\#}$ be the projection
and let $\# \colon \Rep(F_n; H) \to \Rep(F_n; H^{\#})$ be defined by $\rho \mapsto \rho^\# := \pi \circ \rho$.
Define $ \Epifin(\F_n; H) $
to be the subset of representations
$ \rho : F_n \to H $ such that the induced representation
$ \rho^{\#} : \F_n \to H^{\#} $
is surjective.
Then $\Epifin(\F_n;H)$ is a closed $\Aut(\F_n)$-invariant subset of $\Rep(\F_n;G)$.

There is a natural probability measure $\mes_H^n$ on
$\Epifin(\F_n;H)$,
invariant by $ \Aut(\F_n) $,
constructed in the following way.
Firstly, if $ H $ is connected,
we normalize its Haar measure $\haar_H$ to be a probability measure and define $\mes_H^n = \haar_H^{\otimes n}$
on $\Epifin(\F_n;H) = \Rep(\F_n;H)\simeq H^n$. It is invariant under Nielsen moves (resp.\ 
under $\Aut(\F_n)$) because a compact Lie group is unimodular, so that $h\mapsto hg$ and $h\mapsto h^{-1}$ preserve $\haar_H$.
Secondly, if $ H $ is finite, we define $ \mes_H^n $ to be the uniform measure on $ \Epifin(F_n; H) $.
For the general case,
note that if $F\subset \Rep(\F_n; H)=H^n$ is a fiber of $\#$, and $(g_1, \ldots, g_n)$ is a point of $F$, 
then the multiplication by $(H^\circ)^n$ on the right provides a natural diffeomorphism 
\begin{equation}\label{eq:mes_H}
F=\prod_{i=1}^n g_i H^\circ. 
\end{equation}
Moreover, such a fiber is contained in $\Epifin(F_n; H) $ if and only if it intersects it non-trivially.
We can now define $ \mes_H^n $ to be the measure on $ \Epifin(F_n; H) $
such that its projection to $ \Epifin(F_n; H^{\#})$ is $ m_{H^{\#}} $
and its restriction to each fiber of the projection $\Epifin(F_n; H) \to \Epifin(F_n; H^\#) $ is proportional to
$ \mes_{H^{0}} $ under the diffeomorphism given by Equation~\eqref{eq:mes_H}. By unimodularity, this definition does not depend on the chosen base points $(g_1, \ldots, g_n)$ in the fibers of $\#$ and gives a probability measure $ \mes_{H}^n $ that is invariant under the action of $\Aut(\F_n)$.

\begin{mthm}
	\label{thm:main}
	Let  $G$ be a compact Lie group. If $n$ is large enough, then 
	\begin{enumerate}[\rm (1)]
		\item for any $\rho\in \Rep(\F_n;G)$, the
			closure of $\Aut(\F_n)\rho$ in $\Rep(\F_n;G)$ is equal to $\Epifin(\F_n;H_\rho)$, where $H_\rho$ is the closure of the group $\rho(\F_n)$;
		\item the closures of orbits of $\Aut(\F_n)$ in $\Rep(\F_n;G)$ are exactly the 
			$\Epifin(\F_n;H)$, for the closed subgroups $H$ of $G$;
		\item if $\mu$ is an $\Aut(\F_n)$-invariant and ergodic probability measure on
			$\Rep(\F_n; G)$,  there exists a unique closed subgroup $H\subset G$ such that
			$\mu$ coincides with~$\mes_H^n$.
	\end{enumerate}
\end{mthm}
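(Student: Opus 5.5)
The plan is to reduce everything to two ingredients, treated separately: a finite-group part, governed by $H^\#$, and a connected part, governed by $H^\circ$.

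\emph{Step 1: uniform bounds.} Closed subgroups of a compact Lie group $G$ satisfy a Jordan-type property, so one can bound the number of components and the rank needed to topologically generate them. Concretely, I would find $d=d(G)$ such that every closed subgroup $H$ is topologically generated by $d$ elements, and $H^\#$ is generated by $d$ elements. Then, for $n\ge 2d+2$ or so, two classical facts become available:
\begin{itemize}
\item[(a)] A Gaschütz/Dunwoody-type statement: for every finite group $Q$ generated by $d$ elements, $\Aut(\F_n)$ acts transitively on $\Epi(\F_n;Q)$ when $n\ge d+1$. Here $Q$ ranges over quotients $H^\#$; for non-solvable $Q$ this can fail, so I would instead use transitivity on the set of epimorphisms whose lift contains a redundant generating tuple. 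This is enough because with $n$ large we may first move generators into the first $d$ slots.
\item[(b)] Topological generation: for $n$ large, any $\rho$ with closure $H$ can be moved by Nielsen moves so that $\rho(a_1),\dots,\rho(a_d)$ already topologically generate $H$, leaving the remaining coordinates ``free''.
\end{itemize}

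\emph{Step 2: orbit closures (1) and (2).} The inclusion $\overline{\Aut(\F_n)\rho}\subset \Epifin(\F_n;H_\rho)$ is immediate, since this set is closed, invariant and contains $\rho$. For the reverse inclusion, take $\rho'\in\Epifin(\F_n;H)$. Using (a), move $\rho$ so that $\rho^\#=\rho'^\#$. Now put $\rho$ in the normal form of (b). For each free coordinate $i>d$, the moves $g_i\mapsto g_i w(g_1,\dots,g_d)$ let us multiply $g_i$ by any element of the dense subgroup $\langle g_1,\dots,g_d\rangle$. Because $\rho^\#$ is fixed, this approximates any target in the same $H^\circ$-coset, so the free coordinates can be made close to those of $\rho'$. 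Then use some free coordinates, which by now are close to generators of $H$, to correct the first $d$ coordinates in the same way. This ``swap the roles'' trick needs $n\ge 2d$. Statement (2) then follows from (1), since every $\Epifin(\F_n;H)$ is realized as an orbit closure by any $\rho$ whose closure is $H$.

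\emph{Step 3: measures (3).} Let $\mu$ be ergodic. The map $\rho\mapsto H_\rho$ is invariant, but only measurable into the space of closed subgroups. I would show it is $\mu$-a.e.\ constant up to conjugacy and equal to some $H$ on a positive-measure set, so that $\mu$ is supported on $\Epifin(\F_n;H)$. Projecting via $\#$, ergodicity together with (a) makes the image of $\mu$ the uniform measure $m_{H^\#}$. For the fibers, the key claim is that $\mu$ is invariant under right translation by $(H^\circ)^n$ on each fiber. To get this, consider the transvections $g_i\mapsto g_i g_j^k$ for $k\in\Z$. Averaging, by unique ergodicity of rotations on the closure of $\langle g_j\rangle$, shows that $\mu$ is invariant under $g_i\mapsto g_i t$ for all $t\in\overline{\langle g_j\rangle}$, through a disintegration argument. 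Combining over many $j$ and words, and again using the free coordinates, the subgroup of translations preserving $\mu$ fiberwise contains $H^\circ$ in each coordinate. Hence $\mu=\mes_H^n$. Uniqueness of $H$ follows because $\mes_H^n$-almost every $\rho$ has $H_\rho=H$.

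\emph{Main obstacle.} The hardest step is Step 3. The difficulty is making the measurable choice of $H_\rho$ and the translation-invariance argument rigorous when $\overline{\langle g_j\rangle}$ varies with the point. I would first try a Fubini/disintegration argument over the coordinates other than $i$, combined with the mean ergodic theorem for the $\Z$-action $g_i\mapsto g_ig_j$.
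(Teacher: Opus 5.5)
\textbf{The gap is in Step~1.} The whole difficulty of the theorem sits there, and you assume it rather than prove it.

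\emph{The justification you give is wrong.} Jordan's theorem does not bound the number of components of closed subgroups $H\subset G$: the subgroups $\Z/N\Z\subset\UU_1$ have $N$ components. A uniform bound $d$ on the number of generators also does not by itself give Nielsen transitivity or compressibility. Lubotzky's argument needs $n\geq\inc(H^\#)+\rg(H^\#)$, and $\inc(\Z/N\Z)$ is at least the number of prime factors of a squarefree $N$ (Example~\ref{eg:abelian_inc_rg}). This is unbounded over subgroups of $G$.

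\emph{The fallback is circular.} Your fallback in (a) (``first move generators into the first $d$ slots'') and your fact (b) are exactly a uniform redundancy statement, namely Theorem~\ref{THMB} and Corollary~\ref{cor:jordan_and_compact}. That is the main technical input of the paper, not a classical fact.

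\emph{What is missing.} The paper proceeds in three steps:
\begin{enumerate}
\item Lift $H^\#$ to a finite subgroup of $H$ (Borel--Serre, Brion) and apply Jordan's theorem. This gives an abelian normal subgroup $A\subset H^\#$ of rank $\leq m$ and index $\leq J(m)$.
\item Combine Dunwoody's transitivity for abelian groups with Lubotzky's compression in the bounded quotient. This gives redundancy and transitivity on $\Epi(\F_n;H^\#)$ for $n\geq N_\#(m)$.
\item Induct on the length of chains of closed connected subgroups, which is at most $2m-1$, to get redundancy for every compact $H$.
\end{enumerate}
Even this only compresses a tuple, inside the orbit, to about $N(m)$ coordinates. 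Your (b), with $d$ coordinates, is stronger. Topological generation is not an open condition, so it does not follow from density of the orbit either. The paper reaches $(g_1,\dots,g_d,1,\dots,1)$ only in the orbit closure, using that the relation ``$y$ lies in the orbit closure of $x$'' is transitive (Lemma~\ref{lem:red_implies_dense}).

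\textbf{Steps 2 and 3 also need repair, even granting redundancy.}

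\emph{Step 2.} Matching $\rho^\#$ with $\rho'^\#$ and then moving to the normal form (b) undoes the matching. The final correction of the first $d$ coordinates also fails. An arbitrary $\rho'\in\Epifin(\F_n;H)$ need not generate anything large, so the free coordinates, once made close to those of $\rho'$, may not generate enough to correct the first $d$. Moreover, being close to a generating tuple is not the same as generating. The paper instead passes through the canonical point to reach every $d$-redundant topological epimorphism. It then uses density of $\Epi(\F_n;H)$ in $\Epifin(\F_n;H)$ (Lemma~\ref{lem:dense_in_epifin}).

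\emph{Step 3, support.} Pinning down $H$ needs the countability of conjugacy classes of closed subgroups (Montgomery--Zippin). It then needs a second ergodicity argument through $G/{\mathrm{N}}(H)$.

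\emph{Step 3, fiber measures.} Conditional measures are invariant under right translation of a coordinate only where the complementary coordinates generate a group containing $H^\circ$. In a fixed Nielsen frame, redundancy gives this only on a set of positive measure, not almost everywhere. So invariance of $\mu$ under $(H^\circ)^n$ on each fiber does not follow. The paper instead does the following:
\begin{enumerate}
\item It uses $4$-redundancy to put free coordinates in $H^\circ$.
\item It shows their conditional law is Haar on a set of positive measure.
\item It uses that almost every pair generates $H^\circ$ to get $\mu\geq\alpha'\mes_H^n$.
\item It concludes by ergodicity (Lemma~\ref{lem:ergodic_redundant}).
\end{enumerate}
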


The upshot is that the dynamics stabilizes for $n$ large: orbit closures and invariant measures become algebraic,  as in Ratner's results in homogeneous dynamics.
We shall give a precise meaning to the assumption ``{\sl{$n$ is large enough}}''. For instance, there is a universal constant $b\leq 10^3$ such that
\begin{equation}
\label{eq:N(m)_intro}
n\geq 2 m^2\log_2(m)+3m^2+b
\end{equation}
is sufficient, where $m$ is the dimension of the smallest faithfull linear representation of $G$. See~Remark~\ref{rem:estimation_Nm} below. 
A similar phenomenon already appeared in the work of Avni and Garion~\cite{Avni-Garion:2008} on the product replacement algorithm in finite simple groups.

\subsection{Redundancy} To prove such a result, we follow the same path as Gelander in~\cite{Gelander:Aut(Fn)2008}. Our main technical input concerns redundant homomorphisms $\rho\colon \F_n\to G$ or, equivalently, redundant tuples $(g_1, \ldots, g_n)\in G^n$, a concept that we now introduce. 
For $A \subset G$, denote by $\langle  A \rangle$ the subgroup generated by
$A$ and by $\langle  A \rangle_{top}$ the closure of $\langle  A \rangle$ in $G$. 
Let $\Epitop(\F_n ; G)$ be the set of homomorphisms with $\langle  \rho(\F_n) \rangle_{top}=G$, i.e.\  topological epimorphisms.
We have $$\Epitop(\F_n ; G) \subset \Epifin(\F_n ; G)  \subset \Rep(\F_n;G) .$$
Following Lubotzky, Gelander and Minsky \cite{Lubotzky, Gelander-Minsky}
we say that $\rho$ is {\bf{redundant}}
if there is a decomposition $\F_n=A\star B$ into a non-trivial free product (i.e.\ $A$ and $B$ are free groups of positive ranks) such that $\rho(A)$ and $\rho(\F_n)$ have the same closure; equivalently, $\rho$ is redundant if there is a $\varphi\in \Aut(\F_n)$ such that  
\[
\langle 
\rho(\varphi^{-1}(a_1)), 
\ldots, 
\rho(\varphi^{-1}(a_{n-1}))
\rangle_{top}
= \overline{\rho(\F_n)}.
\]
So, being redundant is a topological notion. Note that, even when $G$ is finite, it does not coincide with the notion used by Lubotzky in~\cite{Lubotzky}, for his notion concerns $\rho$ while our notion concerns the orbit of $\rho$ under $\Aut(\F_n)$.
The homomorphism  is 
{\bf{$k$-redundant}} if we can impose the rank of $B$ to be $\geq k$; so, being redundant is the same as being $1$-redundant. 

Translated into properties of tuples $(g_1, \ldots, g_n)\in G^n$, $k$-redundancy means
that, after some Nielsen move mapping the $g_i$ to some $g_i'$, one can forget
the last $k$ generators $g_{n'-k+1}$, $\ldots$, $g_n'$ without changing the
closure of the group generated by the $g_i$. We denote by  $\Redtop(\F_n ; G)$
the set of redundant homomorphisms and by $\Redtop^k(\F_n ; G)$ the set of $k$-redundant homomorphisms. 

\begin{mthm}
\label{THMB}
For every integer $m\geq 1$, there is an integer $N(m)\geq 1$ such that the following property holds for all $n\geq N(m)$: 
if $G$ is any compact subgroup of $\GL_m(\C)$,  then  $\Rep(\F_n ; G) \subset \Redtop(\F_n ; G)$.
\end{mthm}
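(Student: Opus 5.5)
Replacing $G$ by the closure $H=\overline{\rho(\F_n)}$, which is again a compact subgroup of $\GL_m(\C)$, it suffices to show that every tuple $(g_1,\ldots,g_n)\in H^n$ with $\langle g_1,\ldots,g_n\rangle_{top}=H$ can be transformed by Nielsen moves into a tuple whose first $n-1$ entries already topologically generate $H$. The plan is to control two pieces separately: the identity component $H^\circ$ and the finite quotient $H^{\#}$. Both must be bounded in terms of $m$ alone.

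\textbf{Step 1: finite part.} By Jordan's theorem, a compact subgroup $H\subset \GL_m(\C)$ (conjugate into $\UU(m)$) has an abelian normal subgroup of index $\leq J(m)$. One can arrange this subgroup to contain $H^\circ$ as a torus-extension piece, or more simply use that $H$ has a closed normal subgroup $A\supset H^\circ$ with $H/A$ of order $\leq J(m)$ and $A/H^\circ$ abelian. A finite group of order $J$ is generated by $\leq \log_2 J$ elements. By the usual Nielsen-move argument (Gaschütz-type reduction: in a generating tuple of length $>\log_2|Q|$, some element lies in the subgroup generated by the others and can be multiplied into $1$), Nielsen moves produce a tuple in which $g_1,\ldots,g_r$ with $r\le \log_2 J(m)$ generate $H$ modulo $A$. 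The remaining $g_{r+1},\ldots,g_n$ can then be multiplied by words in $g_1,\ldots,g_r$ so that they lie in $A$.

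\textbf{Step 2: the group $A$.} The entries $g_{r+1},\ldots,g_n\in A$, together with the conjugates of all entries by $g_1,\ldots,g_r$, generate a dense subgroup of $A$. Since $A^\circ=H^\circ$ has dimension $\leq m^2$, the closures $L_k=\langle g_{r+1},\ldots,g_{r+k}\rangle_{top}$ form an increasing chain of closed subgroups of $A$. Each strict inclusion either raises $\dim L_k$, which can happen at most $m^2$ times, or, at fixed dimension, raises the number of components. The key lemma is a bound on the length of such chains at fixed identity component: components live in the finite group $N(L^\circ)\cap A/L^\circ$, which by Jordan again is virtually abelian with bounded-rank torus part. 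So past some $N(m)$ generators, adding one more element does not change the closure. A redundant entry then exists, namely one lying in the closed group generated by the others. In the abelian part one instead uses that a dense subgroup of a torus $T^d$ times a finite abelian group of rank $\le m$ is generated by $d+m+1$ elements, with Nielsen moves reducing any longer tuple (Kronecker/Smith normal form over $\Z$ acting on the tuple).

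\textbf{Step 3: the main obstacle.} The delicate point is that ``some $g_j$ lies in $\langle g_i : i\neq j\rangle_{top}$'' does not immediately give redundancy in the Nielsen sense. We need a Nielsen move that replaces $g_j$ by an element close to the identity, or else shows that dropping $g_j$ keeps the closure. Since $g_j$ lies in the closure, dropping it keeps the closure equal, and redundancy as defined here is purely topological. So this is actually fine, and the real difficulty moves to Step 2: producing a uniform bound $N(m)$ on the length of chains $L_1\subsetneq L_2\subsetneq\cdots$ of closed subgroups of compact subgroups of $\GL_m(\C)$ generated by successively added elements. I would try to prove this by induction on $(\dim L,\ \text{number of components of } L \text{ modulo a bounded-index piece})$. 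For the torus part, I would use that closed subgroups of $T^d$ are determined by lattices in $\Z^d$, so that chains are bounded once one accounts for finite-index jumps, controlled via the Jordan bound. This yields an estimate of the shape~\eqref{eq:N(m)_intro}.
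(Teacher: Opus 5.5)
\textbf{There is a genuine gap.} It is the ``key lemma'' of your Step 2, which Step 3 relies on. The claim is that, at fixed identity component, the chain $L_1\subsetneq L_2\subsetneq\cdots$ of closures of initial segments has length bounded in terms of $m$, so that past $N(m)$ entries some $g_j$ lies in $\langle g_i : i\neq j\rangle_{top}$. This is false already for $m=1$.

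Take distinct primes $p_1,\dots,p_n$ and $g_k=e^{2\pi i/p_k}\in\UU_1$. Then $L_k=\mu_{p_1\cdots p_k}$ (roots of unity) increases strictly at every step. No entry lies in the closed group generated by the others, however large $n$ is.

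The same happens with a fixed non-abelian identity component. In $\mathrm{SU}(2)\times\mu_{P}\subset\UU_3$, take $g_k=(s_k,e^{2\pi i/p_k})$ with generic $s_k$. Then $L_k=\mathrm{SU}(2)\times\mu_{p_1\cdots p_k}$ for $k\geq 2$, so the whole tuple is again incompressible for every $n\geq 3$.

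Jordan's theorem bounds the index and the rank of the abelian part, but bounded rank does not bound chain length. This is exactly Example~\ref{eg:abelian_inc_rg}: $\inc$ of an abelian group is not bounded by its rank. So no induction on ``number of components modulo a bounded-index piece'' can yield a uniform $N(m)$. (Also, $N_A(L^\circ)/L^\circ$ is not finite in general.)

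Compressibility of the given tuple is simply not available. Redundancy has to be created by Nielsen moves that actually change the entries, e.g.\ $(g_1,g_2)\mapsto(g_1g_2,g_2)$ in the $\UU_1$ example.

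\textbf{What is needed.} Your Smith-normal-form remark points at the right tool. However, it only covers an abelian $A$, which fails as soon as $H^\circ$ is non-abelian, and it is not built into a recursion. Two things are required.
\begin{enumerate}
\item Redundancy for the finite component group, obtained by Nielsen moves. Jordan's theorem for $H^\#$ gives an abelian normal subgroup of rank $\leq m$ and index $\leq J(m)$. Combined with Dunwoody's transitivity for abelian groups (Corollary~\ref{cor:lubotzky}), every tuple in $(H^\#)^n$ is redundant once $n\geq N_\#(m)$ (Corollary~\ref{cor:jordan_and_compact}).
\item An induction on a quantity bounded in terms of $m$ that sees only identity components: $\dim$, or the length $\ell$ of chains of closed connected subgroups.
\end{enumerate}

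This is how the paper's induction runs. After step~1, all but at most $N_\#(m)-1$ entries lie in $G^\circ$.
\begin{itemize}
\item If the closure of these tail entries has smaller $\ell$, apply the induction hypothesis to the tail, viewed in that compact subgroup of $\GL_m(\C)$. Its own component group is again handled by step~1.
\item If the closure equals the connected group $G^\circ$, drop the last entry. Either the closure is still $G^\circ$, and you are done, or $\ell$ drops and induction applies to the shorter tail.
\end{itemize}
Lemma~\ref{lem:redundancy_from_partial_redundancy} (redundancy on a free factor implies redundancy) then concludes. With $\ell\leq 2m-1$ this gives $N(m)=2mN_\#(m)$; the crude bound $\ell\leq m^2$ would also suffice.

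Without this structure, the quantitative estimate you claim at the end is not supported.
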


This statement implies $k$-redundancy for $n\geq N(m)+k-1$. 
To prove Theorem~\ref{THMB}, we rely on (1) Jordan's theorem on the structure of finite subgroups of $\GL_m(\C)$, (2) a result of Borel and Serre which is used to provide an extension of Jordan's theorem to algebraic subgroups of $\GL_m(\C)$, (3) the fact that every compact subgroup of $\GL_m(\C)$ is an algebraic subgroup in the sense of real algebraic groups, and (4) some basic facts concerning finite groups and Wiegoldt's problem. Thus, the proof relies on well known classical results and will not be a suprise to specialists. 
In a work to appear soon, Cohen and Vigdorovich proved a similar theorem using a different strategy (see Remark~\ref{rem:Cohen-Vigdorovich}).

Theorem~\ref{THMB} can be used to prove Theorem~\ref{thm:main} for, by the Peter-Weyl theorem, any compact Lie group embeds in some $\GL_m(\C)$. Indeed, in complement to~\cite{Gelander:Aut(Fn)2008, Gelander-Minsky}, we shall see that enough redundancy implies dynamical rigidity.

Theorems~\ref{thm:main} and~\ref{THMB} answer positively Conjecture~2.6 and Question~2.9 of~\cite{Gelander:JournalAlgebra2024} in the regime when the rank $n$ of $\F_n$ is large enough.
Section~\ref{par:Applications} contains three applications to the dynamics of $\Aut(\F_n)$ on the character variety $\chi(\F_n;\SL_d(\R))$: Corollary~\ref{cor:minimal} describes minimal invariant compact subsets.

An interesting problem is to classify stationary measures. 
For this, 
let $Z\subset \Rep(\F_n; G)$ be the set of representations such $\rho(\F_n)$ is not dense in~$G$.
If in average the dynamics of $\Aut(\F_n)$ on $\Rep(\F_n; G)\setminus Z$ is expanding and $Z$ is repelling, Theorem~1.1.5 of~\cite{BEFRH} implies that every ergodic stationary measure $\mu$ on $\Rep(\F_n; G)$ with $\mu(Z)=0$ is in fact invariant, and then Theorem~\ref{thm:main} shows that $\mu=\mes_G^n$. So, a natural continuation of this paper is to study expansion properties of $\Aut(\F_n)$ on the tangent space of $\Rep(\F_n; G)$.

\subsection*{Acknowledgement} We thank Alonso Beaumont Llona, Sébastien Gouëzel, Vincent Guirardel, Seung uk Jang, François Maucourant for useful discussions. Special thanks to Tsachik Gelander for discussions and the reference~\cite{Avni-Garion:2008} and Alex Lubotzky for connecting us with Tal Cohen and Itamar Vigodrovich. 

\section{Finite groups}

\subsection{Rank, compressibility, and chains} Let $G$ be a finite group. 
We define 
\begin{align*}
\rg(G) &= \min \{ \card(S) \; ; \; S\subset G \text{ and generates } G \}, \\
\tilde \rg(G) &= \max\{ \rg(H)  \; ; \; H \text{ subgroup of } G \},
\end{align*}
The number $\rg(G)$ is called the {\bf{rank}} of $G$. 
A subset $S\subset G$ is {\bf{compressible}} if there is an element $s\in S$ such that  $s\in \langle S\setminus \{s\}\rangle$. If there is no such element $s$, we say that $S$ is incompressible.  
Define two incompressibility constants by
\begin{align}\label{eq: incomp}
\inc(G) &= \max\{ \card(S) \; ; \; S\subset G \text{ generates } G \text{ and is incompressible }\}, \\
\tilde\inc(G) &= \max\{ \card(S) \; ; \; S\subset G \text{ is incompressible}\},
\end{align}
so that $\tilde\inc(G)$ is the maximum of $\inc(H)$ over all subgroups $H\subset G$. We have 
$\rg(G)\leq \inc(G)< \card(G)$ and $\tilde\rg(G)\leq \tilde\inc(G) < \card(G)$.
Let $\lc(G)$ be the maximal length of an increasing chain of subgroups 
\begin{equation}
\label{eq:chain_length}
\{1\} \subsetneq G_1 \subsetneq G_2\subsetneq \cdots \subsetneq G_i\subsetneq\cdots  \subsetneq G_{\ell}=G.
\end{equation}
In a chain of subgroups, the index of $G_i$ in $G_{i+1}$ is at least $2$. Thus,
\begin{equation}
\label{eq:d_ic_lc}
\rg(G)\leq \inc(G)\leq \tilde\inc(G) \leq \lc(G)\leq \log_2(\card(G))
\end{equation}

\begin{eg}
Consider $\Bij_n$, the group of permutations on $n$ letters. By the Stirling formula, $\log(\card(\Bij_n))\simeq n\log(n)$ up to an error of order $\log(n)$; in particular, $n\simeq\log(\card(\Bij_n))/\log\log(\card(\Bij_n))$. Since the groups $\Bij_n$ form an increasing sequence, we obtain $\lc(\Bij_n)\geq n-1$. In fact,  it is proven in~\cite{Cameron-Solomon-Turull:1989} that
$ \lc(\Bij_n) = \lfloor \frac{3n-1}{2}\rfloor - b(n)$ where $b(n)\leq \log_2(n)$ is the number of occurences of $1$ in the binary expansion of $n$.
Since the transpositions $(i,i+1)$ form an incompressible set, $\inc(\Bij_n)\geq n-1$;  Whiston shows in~\cite{Whiston:2000} that this is optimal: if $S\subset \Bij_n$ is incompressible, then  $\card(S)\leq n-1$ and  if $\card(S)=n-1$ then $\langle S\rangle = \Bij_n$.
Thus,  $\inc(\Bij_n) = \tilde\inc(\Bij_n)=n-1$ for every subgroup $H$ of $\Bij_n$. 
\end{eg}

\begin{eg}
\label{eg:abelian_inc_rg}
In the group $\Z/N\Z$, where $N=p_1\cdots p_k$ for $k$ distinct  prime numbers, the set $S=\{N/p_1, \ldots, N/p_k\}$ is generating and incompressible. For instance $(2,3)$ is an incompressible generating pair for $\Z/6\Z$. 
A Nielsen move maps it to $(2,1)$, which is compressible; so, the set of incompressible tuples is not invariant under Nielsen moves. 
\end{eg}

\subsection{Transitivity} When $G$ is finite, we have $$\Epitop(\F_n ; G) =   \{ \rho : \F_n \to G  \; ; \; \rho(\F_n) = G \} ,$$ 
$$ \Red(\F_n ; G) =   \{ \rho : \F_n \to G  \; ; \; \rho(A) = \rho(\F_n) \text{ for some proper free factor } A \text{ of } \F_n  \} . $$

\begin{thm}\label{thm:lubotzky}
Let $G$ be a finite group. 
\begin{enumerate}[\rm (1)]
\item If $n\geq \inc(G)+\rg(G)$ then $\Aut(F_n)$ acts transitively on $\Epi(\F_n;G)$;
\item if $G$ is solvable and  $n>\rg(G)$,  $\Aut(\F_n)$ acts transitively on $\Epi(\F_n;G)$.
\end{enumerate}
Thus, if $n\geq \tilde\inc(G)+\tilde\rg(G)$, the orbits of $\Aut(\F_n)$ in $\Rep(\F_n;G)$ are exactly the subsets $\Epi(\F_n; H)$, where $H$ is any subgroup of $G$. 
\end{thm}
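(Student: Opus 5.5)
For (1), I would use the classical Nielsen-move argument. Fix a generating tuple $(s_1,\dots,s_d)$ of $G$ with $d=\rg(G)$, and take any $\rho\in\Epi(\F_n;G)$ with associated tuple $(g_1,\dots,g_n)$. The plan is to bring $(g_1,\dots,g_n)$ to the normal form $(s_1,\dots,s_d,1,\dots,1)$ by Nielsen moves. Since the action on tuples is by a group, two epimorphisms with the same normal form lie in the same orbit.

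\emph{Compress.} Among the $g_i$, choose a minimal sub-collection that still generates $G$. Suppose some $g_i$ lies in the subgroup generated by the other entries. Then $g_i$ is a word in them, and right/left multiplications (together with inversions and permutations) turn $g_i$ into $1$ without changing the other entries. Iterating, we reach a tuple whose nontrivial entries form an incompressible generating set. This set has at most $\inc(G)$ elements, so at least $n-\inc(G)\geq \rg(G)=d$ entries equal $1$.

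\emph{Insert.} Each $s_j$ is a word in the remaining nontrivial entries. Multiplying the $j$-th trivial slot successively by the letters of that word, we put $s_j$ into $d$ of the trivial slots.

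\emph{Clean up.} The entries $s_1,\dots,s_d$ now generate $G$, so every other entry is a word in them. Each such entry can therefore be killed to $1$ by multiplying by the inverse word. Permuting then gives the normal form.

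The counting $n\geq\inc(G)+\rg(G)$ is exactly what guarantees enough free slots in the insert step. This step is the only delicate point: one must check that the entries being multiplied into a slot are never the slot itself. That is fine here, because the slots are distinct from the incompressible entries.

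For (2), I expect the main obstacle. The bound $\inc(G)$ is too weak, and one needs the classical theorem, due to Dunwoody, that for solvable $G$ the Nielsen classes of generating $n$-tuples with $n>\rg(G)$ are a single class. My plan is to invoke that result from the literature (Dunwoody, or Lubotzky's survey on product replacement) rather than reprove it. If I had to prove it, I would induct on a chief series: take a minimal normal abelian subgroup $N$ (an $\mathbf{F}_p$-module), and apply the induction hypothesis to $G/N$ to match the images. Then, since $n>\rg(G)$, I would use the extra slot and module-theoretic (Gaschütz-type) counting to adjust by elements of $N$ via Nielsen moves. That adjustment is the genuinely hard part.

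For the final assertion, first note that $\Aut(\F_n)$ preserves the image $\rho(\F_n)$, so $\Rep(\F_n;G)$ is the disjoint union of the invariant sets $\Epi(\F_n;H)$ over subgroups $H$. Each is nonempty when $n\geq\rg(H)$. Since $\inc(H)\leq\tilde\inc(G)$ and $\rg(H)\leq\tilde\rg(G)$, part (1) applied to $H$ shows each $\Epi(\F_n;H)$ is a single orbit.
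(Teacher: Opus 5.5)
Your proposal is correct and follows the paper's argument. For (1), the paper likewise compresses the tuple to an incompressible generating set of at most $\inc(G)$ entries padded by $1$'s, inserts a fixed generating tuple of length $\rg(G)$ into the free slots, and then kills the remaining entries. For (2), the paper also cites Dunwoody (via Lubotzky) and only proves the abelian case, by an induction on a minimal subgroup similar in spirit to your sketch. The final assertion is, as in your last paragraph, obtained by applying (1) to each subgroup $H$ with $\inc(H)\le\tilde\inc(G)$ and $\rg(H)\le\tilde\rg(G)$.
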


This theorem is proven in~\cite{Lubotzky}, the most difficult part being Assertion~(2), due to Dunwoody~\cite{Dunwoody:1970}.
Assertion~(1) also holds if $n \geq \lc(G) +1$, see~\cite{Lubotzky}. 

We reproduce the proof of Assertion~(1) ({\sl{i.e.}}\ Proposition 3.1 in \cite{Lubotzky}), because it will be generalized in Section \ref{sec:orbit_closure}. We shall also prove Assertion~(2) for abelian groups in Example \ref{eg:zpr}. This is the only case we shall need in this paper, precisely for Lemma \ref{lem: exsequence}.
	
\begin{proof}[Proof of {\rm{(1)}}]
		We shall use the following elementary but crucial remark. 
		\begin{rem}\label{Delta}
	if
	$ (g_1, \dots, g_n) \in G^{n} $
	and $ h \in \langle  g_1, \dots, g_{n-1}\rangle$, then
	there is a Nielsen move
	from 
	$ (g_1, \dots, g_n)$
	to
	$ (g_1, \dots, g_{n-1},  g_n h^{\pm 1} )$
	(resp.\ to $ (g_1, \dots, g_{n-1}, h^{\pm 1} g_n )$).
\end{rem}
		Set
	$ d = \rg(G) $ and 
	$ m = \inc(G) $. We can assume $G$ non-trivial, {\sl{i.e.}}\  $\rg(G)\geq 1$.
	Fix a system 
	$ (g_1, \dots, g_d) $ of generators of $ G $.
	Pick
	$ \rho \in \Epi(F_n; G) $,
	and identify it 
	with the tuple 
	$ \left( h_1, \dots, h_n \right) $
	in $G^n$ 
	defined by $h_i= \rho(a_i)$.
	
	By definition of $ \inc(G) $
	and because $ n \ge \inc(G) + \rg(G) $,
	the set
	$ \left( h_1, \dots, h_n \right) $
	is compressible.
	This means that, up to permutation,
	$ h_n \in \langle h_1, \dots, h_{n-1} \rangle $;
	hence
	$ (h_1, \dots, h_{n-1}) $
	still generates $ G $.
	By induction,
	we can assume that
	$ (  h_1, \dots, h_{m} ) $
	generates $ G $, and consequently
        the remaining elements
	$ h_{m+1}, \dots, h_n $
	are in 
	$ \langle h_1, \dots, h_{m} \rangle $.
	By Remark \ref{Delta}, there is a Nielsen move mapping	
        $ (h_1, \dots, h_{n}) $
	to
	$ (h_1, \dots, h_{m}, 1, \dots, 1) $.
	In the same way,
	$g_1$, $\dots$, $g_d$ 
	are all in 
	$ \langle h_1, \dots, h_{m} \rangle $
	and
	$ (h_1, \dots, h_{m}, 1, \dots, 1) $
	can be moved to the tuple
	\[ (h_1, \dots, h_{m},
	g_1, \dots, g_d,
	1, \dots, 1)
	\]
	because
	$ n \ge m + d $.

	We now go the other way arround.
	The elements 
	$ h_i$ 
	are all in 
	$ \langle g_1, \dots, g_d \rangle$,
	thus
	we can move
	$ (h_1, \dots, h_{m},
	g_1, \dots, g_d,
	1, \dots, 1)
	$
	to
	$ (
	g_1, \dots, g_d,
	1, \dots, 1)
	$.
	This establishes
	that 
	$ (
	g_1, \dots, g_d,
	1, \dots, 1)
	$ is in the orbit
	of $\rho \in \Epi(F_n; G) $,
	and shows that the action
	is transitive on $\Epi(F_n; G)$.
\end{proof}

 If $A$ is an abelian group of rank $d$, the rank of any of its subgroups its at most $d$. Thus, 
 Assertion~(2) of Theorem~\ref{thm:lubotzky} gives

\begin{cor}
\label{cor:lubotzky}
Let $A$ be a finite abelian group. 
\begin{enumerate}[\rm (1)]
\item ${\tilde{\rg}}(A)=\rg(A)$; 
\item if $n>\rg(A)$, the orbits of the action of $\Aut(F_n)$ on $\Rep(\F_n;A)$ are exactly the subsets $\Epi(\F_n;H)$, where $H$ is any subgroup of $A$. 
\end{enumerate}
\end{cor}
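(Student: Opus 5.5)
The two assertions follow from Theorem~\ref{thm:lubotzky}(2), which covers abelian groups because they are solvable, together with the classical fact recalled just before the corollary: every subgroup of an abelian group of rank $d$ has rank at most $d$. We prove~(1) first and then deduce~(2) from it.

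For (1), the inequality $\rg(A)\leq\tilde\rg(A)$ is immediate, since $A$ is one of its own subgroups. For the reverse inequality we justify the subgroup-rank fact. Write $A=\bigoplus_p A_p$, the sum of its Sylow subgroups. Then $\rg(A)=\max_p \rg(A_p)$, and for a $p$-group one has $\rg(A_p)=\dim_{\Fp}(A_p/pA_p)=\dim_{\Fp} A_p[p]$, where $A_p[p]$ is the $p$-torsion. If $H\subset A$ is a subgroup, then $H_p[p]\subset A_p[p]$, so $\rg(H_p)\leq \rg(A_p)$ for every $p$. Taking the maximum over $p$ gives $\rg(H)\leq\rg(A)$, and hence $\tilde\rg(A)=\rg(A)$.

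For (2), let $\rho\in\Rep(\F_n;A)$ and put $H=\rho(\F_n)$. Each set $\Epi(\F_n;H)$ is $\Aut(\F_n)$-invariant, because precomposing with an automorphism does not change the image $\rho(\F_n)$. These sets partition $\Rep(\F_n;A)$ as $H$ ranges over the subgroups of $A$, since every $\rho$ lies in exactly one of them, namely the one indexed by its image. It therefore suffices to show that $\Aut(\F_n)$ acts transitively on each $\Epi(\F_n;H)$. The subgroup $H$ is abelian, hence solvable, and by~(1) we have $\rg(H)\leq\rg(A)<n$. Theorem~\ref{thm:lubotzky}(2) applied to $H$ then gives transitivity on $\Epi(\F_n;H)$. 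The sets $\Epi(\F_n;H)$ are nonempty for the same reason, $n\geq\rg(H)$, so each subgroup $H$ really does give an orbit.

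The main obstacle is that everything rests on Dunwoody's theorem, Theorem~\ref{thm:lubotzky}(2). The paper says it will reprove the abelian case in Example~\ref{eg:zpr}, so that is the version to rely on. A direct argument would use Nielsen moves to perform Smith-normal-form style reductions on the tuple $(g_1,\dots,g_n)$ in $A$. With $n>\rg(H)$ one can reach a standard tuple $(e_1,\dots,e_d,1,\dots,1)$, where $(e_1,\dots,e_d)$ is an invariant-factor basis of $H$. The delicate point is handling the "determinant" or unit ambiguity in this normal form, and the spare coordinate available because $n>\rg(H)$ is what absorbs it.
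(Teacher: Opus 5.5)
Your proof is correct and is essentially the paper's own derivation: the paper obtains the corollary from the fact that subgroups of $A$ have rank at most $\rg(A)$ together with Theorem~\ref{thm:lubotzky}(2), and your Sylow/$p$-torsion argument supplies the rank bound that the paper states without proof. One small correction to your closing remark: the paper's self-contained proof for abelian groups is not Example~\ref{eg:zpr}, which only treats $\Fq^r$ and itself invokes this corollary. It is the induction on $\lc(A)$ given right after the corollary, which quotients by a subgroup $G_1\cong\Z/p\Z$ and uses a spare coordinate to absorb the error in $G_1$.
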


On the other hand,  $\inc(A)$, hence $\lc(A)$, is not bounded in terms of $\rg(A)$, as Example~\ref{eg:abelian_inc_rg} shows. To be self-contained, we present Dunwoody's proof of Corollary~\ref{cor:lubotzky}\,(2) (the proof for solvable groups, as in Theorem~\ref{thm:lubotzky}, is similar). 

\begin{proof}[Proof of Corollary~\ref{cor:lubotzky}\,(2)] 
We set $d=\rg(A)$. It suffices to show that the Nielsen moves act transitvely on the systems of generators $(g_1, \ldots, g_n)$.
We argue  by induction on the chain length $\lc(A)$. 
This is trivial when $\lc(A)=0$. 
Now, suppose $\lc(A)=\ell$ and the statement proven up to length $\ell-1$. Fix a maximal sequence of subgroups $G_i\subset A$, as in~\eqref{eq:chain_length}. Then $G_1$ is a proper subgroup of $A$ isomorphic to $\Z/p\Z$ for some prime $p$ and $\lc(A/G_1)=\ell-1$. Let $(h_1, \ldots, h_d)$ be a fixed system of generators of $A$ and  $(g_1, \ldots, g_n)$ be any system of generators. Since $\lc(A/G_1)=\ell-1$, there is a Nielsen move mapping $(g_1, \ldots, g_n)$ to $(m_1h_1, \ldots, m_dh_d, m_{d+1}, \ldots, m_n)$, with each $m_i$ in $G_1$. If $m_j\neq 1$ for some $j\geq d+1$, then $m_j$ generates $G_1$, the last $n$-tuple can be moved to $(h_1, \ldots, h_d, m_j, 1, \ldots, 1)$, and then to $(h_1, \ldots, h_d, 1, \ldots 1)$ since the $h_i$ generate $A$. Otherwise, the $m_ih_i$ generate $A$, so we can replace $m_{d+1}$ by a non-trivial element of $G_1$ and apply the previous argument. In all cases $(h_1, \ldots, h_d, 1, \ldots 1)$ is in the orbit of $(g_1, \ldots, g_n)$, which concludes the proof.
\end{proof}

The next examples show that Corollary~\ref{cor:lubotzky} is optimal, 
and that Theorem~\ref{thm:main} fails when $n$ is too small.

\begin{eg} \label{eg:zpr}
Fix a prime $q$ and an integer $r\geq 1$,  and set $A=\Fq^r$.  By the previous corollary, $\Aut(\F_n)$ acts transitively on $\Epi(\F_n;A)$ if $n\geq r+1$. If $n=r$, the set $\Epi(\F_r; A)$ can be identified to the set of basis of the $\Fq$-vector space $\Fq^r$, hence to $\GL_r(\Fq)$. With such an identification, the action of  $\Aut(\F_r)$ is the action of  $\SL_r^{\pm}(\Fq)$ on $\GL_r(\Fq)$ by right multiplication, where $\SL_r^{\pm}(\Fq)=\{g\in \GL_r(\Fq)\; ; \; \det(g)=\pm 1\}$. This action is not transitive, it has $(q-1)/2$ orbits given by the different values of the determinant on $\GL_r(\Fq)$ up to $\pm 1$. On the other hand, by Corollary~\ref{cor:lubotzky}\,(2), the action of $\Aut(\F_n)$ is transitive on $\Epi (\F_n ;  A)$  if $n\geq r+1$
(\footnote{This can be proved directly by linear algebra. Indeed, if $(g_1, \ldots, g_n)$ generates $A$, one can assume that $(g_1, \ldots, g_r)$ is a basis and  find a matrix in $\SL_r(\Fq)$ that transforms the $g_i$ into $(e_1, \ldots, \alpha e_r, g'_{r+1}, \ldots, g'_n)$, where $(e_1, \ldots, e_r)$ is the standard basis, $\alpha\in \Fq^\times$, and the $g'_j$ are in $A$. Thus, with additional Nielsen moves, one can replace $(g'_{r+1}, \ldots, g'_n)$ by $(e_r, 0, \ldots, 0)$, and then arrive at $(e_1, \ldots, e_r, 0, \ldots, 0)$.}).
\end{eg}

\begin{eg} Fix two integers $r, q\geq 1$.
Consider the lamplighter group $L$ obtained by letting $\Z/r\Z$ act on the additive group of functions from $\Z/r\Z$ to $\Z/q\Z$. 
Thus, $L$ is the semi-direct product of $\Z/r\Z$ and the abelian group $F$ of all functions $f\colon \Z/r\Z\to \Z/q\Z$, where $n\in \Z/r\Z$ acts on $F$ by $f(\cdot)\mapsto f(\cdot-n)$. 
Its rank is $2$: indeed, $L$ is generated by  $1\in \Z/r\Z$ and the Dirac function $\delta_0\in F$ ({\sl{i.e.}}\ $\delta_0(0)=1$ and $\delta_0(n)=0$ for all $n\in \Z/r\Z\setminus \{ 0\}$). Dunwoody's theorem shows that  $\Aut(\F_n)$   acts transitively on  $\Epi(\F_n; L)$ when $n\geq 3$. On the other hand, the subgroup $F\subset A$ is isomorphic to $(\Z/q\Z)^r$, so its rank is  $r$. By, the previous example, the action of $\Aut(\F_n)$ on $\Epi(\F_n;F)$ is not transitive when  $n\leq q$; if $q$ is prime and  $n=r$, there are  $(q-1)/2$ orbits.
\end{eg}

\subsection{Jordan families}
If $G$ is a finite group and $N$ is a normal subgroup of $G$, 
\begin{enumerate}[\rm (1)]
\item $\inc(G)\leq \inc(G/N)+\tilde\inc(N)$;
\item $\lc(G)= \lc(G/N)+\lc(N)$.
\end{enumerate}
These results are taken from~\cite{Cameron-Solomon-Turull:1989, Dunwoody:1970, Whiston:2000}.
We only sketch the proof of the first assertion, because it is useful to have it in mind in what follows. 
For this, fix some incompressible generating set $S\subset G$, let $n =\card (S)$. Its projection $\overline S$ in $G/N$ generates $G/N$, hence there is $T\subset S$ such that $\overline T$ generates $G/N$ and $\card(T) \leq \inc(G/N)$. Permuting the elements of $S$, we can write
$$S=\{ t_1, \ldots, t_d, s_{d+1}, \ldots, s_n \} $$
with $T=\{ t_1, \ldots t_d \}$. There are words $w_j$ in the $t_i$ such that the $ s'_j:=s_jw_j$ are in $N$ for every $j>d$. 
Let $S'=\{ t_1, \ldots, t_d, s'_{d+1}, \ldots, s'_n \}$ be the set obtained by these Nielsen moves. Then, $\card(S')=\card(S)$, $S'$ generates $G$, and $\{ s'_{d+1}, \ldots, s'_{n} \}$ is an incompressible system in $N$; indeed, if one of the $s'_k$ could be generated by the other $s'_j$,
we could write $s_kw_k$ as a word in the $s_jw_j$, hence $s_k$ itself would be a word in the $t_i$ and the $s_j$ (for $d+1 \leq j \leq n$ and $j \neq k$) contradicting the incompressibility of~$S$. Thus, $\inc(G)\leq \inc(G/N)+\tilde\inc(N)$. 

\begin{lem} 
\label{lem: exsequence} 
Let $G$ be a finite group and let $ 0\to A \to G \to Q\to 1$
be an exact sequence 
with $A$ abelian. If  \[ n \geq \rg(A) + \tilde\inc(Q) +1, \] every $\rho \in \Rep(\F_n;G)$ is redundant.
\end{lem}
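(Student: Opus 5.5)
Write $\rho=(g_1,\ldots,g_n)$, set $H=\rho(\F_n)$ (a finite group, so the closure is irrelevant), and work inside $H$ with the induced exact sequence $0\to A\cap H\to H\to \overline H\to 1$, where $\overline H$ is the image of $H$ in $Q$. The plan is to replace $\rho$ by a Nielsen-equivalent tuple in which one generator can be turned into $1$ while the others still generate $H$. The first step mimics the proof of $\inc(G)\leq \inc(G/N)+\tilde\inc(N)$ given above. The images $\overline g_i$ generate $\overline H\subset Q$. Repeatedly discarding a compressible element, we find a subset of at most $\inc(\overline H)\leq\tilde\inc(Q)$ indices whose images still generate $\overline H$. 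After a permutation we may assume these are $g_1,\ldots,g_d$ with $d\leq\tilde\inc(Q)$.

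For each $j>d$, choose a word $w_j$ in $g_1,\ldots,g_d$ with $\overline{g_jw_j}=1$. By Remark~\ref{Delta}, a Nielsen move replaces $g_j$ by $s_j:=g_jw_j\in A\cap H$. This step does not change $\langle g_1,\ldots,g_n\rangle=H$. We now have a tuple $(g_1,\ldots,g_d,s_{d+1},\ldots,s_n)$ with $s_j\in A':=A\cap H$, and the number of $s_j$ is $n-d\geq \rg(A)+1$.

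The second step uses abelian redundancy. Let $B=\langle s_{d+1},\ldots,s_n\rangle\subset A$, an abelian group. By Corollary~\ref{cor:lubotzky}\,(1), $\rg(B)\leq\tilde\rg(A)=\rg(A)<n-d$. By Corollary~\ref{cor:lubotzky}\,(2), applied to $B$ with $n-d>\rg(B)$, the action of $\Aut(\F_{n-d})$ on $\Epi(\F_{n-d};B)$ is transitive. Nielsen moves among the last $n-d$ entries therefore transform $(s_{d+1},\ldots,s_n)$ into $(b_1,\ldots,b_r,1,\ldots,1)$ with $r=\rg(B)<n-d$, so at least one entry equals $1$. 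These moves only involve the last $n-d$ coordinates, so they are Nielsen moves of the full tuple. Moreover, $\langle g_1,\ldots,g_d,b_1,\ldots,b_r\rangle$ still equals $H$.

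The resulting tuple has a trivial coordinate, so after a permutation its first $n-1$ entries generate $\rho(\F_n)$. Since this tuple is $\varphi_*\rho$ for some $\varphi\in\Aut(\F_n)$, we conclude that $\rho$ is redundant.

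The only delicate point I expect is bookkeeping. First, the bound must use $\tilde\inc(Q)$ applied to the subgroup $\overline H$ rather than to $Q$ itself; this is why the hypothesis involves $\tilde\inc$. Second, the Dunwoody-type transitivity must be applied to the subgroup $B$ generated by the $s_j$, not to $A$; this is why we need $\tilde\rg(A)=\rg(A)$, i.e.\ Corollary~\ref{cor:lubotzky}\,(1).
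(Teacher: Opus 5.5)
Your proof is correct and follows the paper's route. You reduce to at most $\inc(\overline H)\leq\tilde\inc(Q)$ entries whose images generate the image of $H$ in $Q$, push the remaining $n-d\geq\rg(A)+1$ entries into $A\cap H$ via Remark~\ref{Delta}, and then apply Dunwoody's transitivity (Corollary~\ref{cor:lubotzky}\,(2)) to the subgroup $B$ they generate, which produces a trivial entry. Your bookkeeping is, if anything, slightly more careful than the paper's: you take \emph{at most} rather than exactly $\inc(\overline H)$ lifted generators, and you justify $\rg(B)\leq\tilde\rg(A)=\rg(A)$ explicitly.
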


\begin{proof}
Let $\rho$ be an element of $\Rep(\F_n;G)$ and let $H$ be its image. 
Denote by $Q_H$ the projection of $H$ in $Q$ and by $A_H$ the intersection of $H$ with $A$.
Set   $$d=\rg(Q_H)\leq \tilde\rg(Q) \ \ , \ \ I=\inc(Q_H)\leq \tilde\inc(Q) \ \ , \ \ e = \rg(A) .$$
Our assumption implies $n-I-e \geq 1$.

 Set $(g_1 , \ldots , g_n) = (\rho(a_1), \ldots , \rho(a_n))$, where as above $\F_n = \langle  a_1,\ldots, a_n  \vert \emptyset \rangle$. Since $n\geq  \tilde\inc(Q)$, there is a Nielsen move that maps
 $(g_1, \ldots, g_n)$ to a tuple 
 \[
 (h_1, \ldots, h_{I}, h'_{I+1}, \ldots, h'_n)
 \]
where $\{ \bar h_1 , \ldots , \bar h_I \}$ generates $Q_H$ ($\bar h_j$ denoting the projection of $h_j$ in $Q$) and is incompressible (in particular $h_j \notin A$ for $1 \leq j \leq I$), and $h'_j \in A_H$ for $I+1 \leq j \leq n$. Let $B$ be the subgroup of $A_H$ generated by $\{h'_{I+1}, \ldots , h'_n \}$. The rank of $B$ is $\leq \rg(A_H)\leq \rg(A)=e$ thus we can fix a system of generators $\{ b_1, \ldots, b_{e} \}$ of $B$. 
Since $n-I \geq e +1 \geq d(B) +1$, Corollary~\ref{cor:lubotzky}\,(2) with $G=B$ gives a Nielsen move acting only on the $h'_j$ that maps
$(h'_{I+1}, \ldots, h'_n)$ to $(b_1, \ldots, b_e, 1, \ldots, 1)$, with at least one $1$ at the end. 
This shows that $\rho$ is redundant. \end{proof}

\begin{thm}\label{thm:jordan_size} 
 Let $G$ be a finite group with an exact sequence $ 0 \to A \to G \to Q\to 1$ where $A$ is an abelian group. 
 If $$ n \geq 1 + \rg(A) + \tilde\rg(Q)+\tilde\inc(Q) $$
 then the orbits of  $\Aut(\F_n)$ in $\Rep(\F_n; G)$ are exactly the subsets $\Epi(\F_n; H)$ where $H$ is any subgroup of $G$. 
\end{thm}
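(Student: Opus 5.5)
Fix a subgroup $H\subset G$. The orbits of $\Aut(\F_n)$ preserve the image $\rho(\F_n)$, so $\Rep(\F_n;G)$ is the disjoint union of the invariant sets $\Epi(\F_n;H)$. It therefore suffices to show that $\Aut(\F_n)$ acts transitively on each $\Epi(\F_n;H)$. Set $A_H=A\cap H$ and let $Q_H$ be the image of $H$ in $Q$. Then $0\to A_H\to H\to Q_H\to 1$ is exact, with
\[ \rg(A_H)\leq \rg(A),\qquad \rg(Q_H)\leq\tilde\rg(Q),\qquad \inc(Q_H)\leq \tilde\inc(Q). \]
The first inequality is Corollary~\ref{cor:lubotzky}\,(1). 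So we may replace $G$ by $H$ and prove transitivity on $\Epi(\F_n;G)$ under the hypothesis $n\geq 1+e+d+I$, where $e=\rg(A)$, $d=\rg(Q)$ and $I=\inc(Q)$.

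The plan is to move every generating tuple to a normal form. I would first fix a tuple $(t_1,\ldots,t_d)$ of elements of $G$ whose images generate $Q$. Take $(g_1,\ldots,g_n)\in\Epi(\F_n;G)$ and argue as in the proof of Lemma~\ref{lem: exsequence}. A Nielsen move brings the tuple to $(h_1,\ldots,h_I,a_{I+1},\ldots,a_n)$, where the $\bar h_j$ generate $Q$ and $a_j\in A$. Since $n-I\geq d+e+1$, I would then run the two-step trick of Theorem~\ref{thm:lubotzky}\,(1) on the $Q$-coordinates.

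The first step is to insert the $t_i$. Each $t_i$ lies in $\langle h_1,\ldots,h_I\rangle A$. Remark~\ref{Delta} lets us multiply $a_{I+i}$, for $i=1,\ldots,d$, by words in the $h_j$. This turns those slots into $t_i\alpha_i$ with $\alpha_i\in A$.

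The second step is to remove the $h_j$. Each $h_j$ lies in $\langle t_1\alpha_1,\ldots,t_d\alpha_d\rangle A$, so each $h_j$ can be replaced by an element of $A$. After a permutation we reach
\[ (t_1\alpha_1,\ldots,t_d\alpha_d,\,b_1,\ldots,b_{n-d}),\qquad b_k\in A, \]
with $n-d\geq e+1+I\geq e+1$ entries in $A$.

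The main obstacle is the bookkeeping of the $A$-parts: the correction terms $\alpha_i$ and the group generated by the $b_k$. Let $K=\langle t_1\alpha_1,\ldots,t_d\alpha_d\rangle$ and $B=\langle b_k\rangle$, where $B\subset A$. Since the tuple generates $G$, the normal closure of $B$ under $K$, together with $K\cap A$, generates $A$.

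I would first normalize the $b_k$ by Corollary~\ref{cor:lubotzky}\,(2), which applies to $B$ since $n-d>e\geq\rg(B)$. This brings them to $(c_1,\ldots,c_e,1,\ldots,1)$ with at least $I+1\geq 1$ free slots. Conjugates of the $c_i$ by the $t_j\alpha_j$ belong to the group generated by the tuple, so Remark~\ref{Delta} lets us place any element of $A$ into a free slot. One would like to use this to change the $c_i$ into a fixed generating set of $A$ and the $\alpha_i$ into $1$.

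The delicate point is that in Remark~\ref{Delta} we may only multiply a slot by elements generated by the other slots. Killing $\alpha_i$ requires that $\alpha_i$ be generated without using $t_i\alpha_i$ itself. I would handle this with the spare $I+1$ slots. First fill them with elements of $A$ which, together with conjugates, generate all of $A$ independently of the $t$-slots: since $A$ lies in the group generated by the tuple, this is possible by writing generators of $A$ as words and copying them into free slots. Then use these $A$-slots to strip the $\alpha_i$. Finally apply Corollary~\ref{cor:lubotzky}\,(2) once more, now to all of $A$, on the remaining $n-d\geq e+1$ abelian slots. This reduces the tuple to the fixed form $(t_1,\ldots,t_d,a^0_1,\ldots,a^0_e,1,\ldots,1)$, where $(a^0_i)$ is a generating set of $A$ fixed in advance. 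Every tuple in $\Epi(\F_n;G)$ reaches this form, which gives transitivity.

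One caveat needs checking: the fixed form should only require the $a^0_i$ to generate $A$ as a normal subgroup of $G$, which would cost more care. In any case it suffices that they generate $A$ as a group, which is what the last application of Corollary~\ref{cor:lubotzky}\,(2) provides.
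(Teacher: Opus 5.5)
Your proposal is correct and follows essentially the paper's route: compression via $\inc(Q_H)$, normalization of the abelian block by Corollary~\ref{cor:lubotzky}\,(2), insertion of lifts of generators of $Q_H$, pushing the $h_j$ into $A$, and enlarging the abelian block to all of $A_H$ through a trivial slot. The only real difference is that you insert the lifts $t_i\alpha_i$ into occupied slots and must strip the $\alpha_i$ afterwards, whereas the paper first normalizes the abelian block and inserts the exact $q_i$ into trivial slots, so no correction terms arise. Two points need care in your filling step: the $I+1$ free slots may not hold a full generating set of $A$ at once, so insert generators one at a time and re-apply Corollary~\ref{cor:lubotzky}\,(2) after each insertion to recreate a trivial slot; and the abelian slots must end up generating $A$ as a group, not merely up to conjugation, since that is what stripping the $\alpha_i$ requires.
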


\begin{proof}
 Let $\rho\colon \F_n\to G$ be a homomorphism and let $H=\rho(\F_n)$. We denote by $Q_H$ the projection of $H$ in $Q$ and $A_H = A \cap H$.
Let $\{ q_1, \ldots q_d \} \subset H$ where $d = \rg(Q_H)$ and whose projection generates $Q_H$.
Let $\{ u_1, \ldots, u_e \} \subset A_H$ be a generating subset of $A_H$,  with  $e:=\rg(A_H)\leq \rg(A)$. 
Let $I = \inc(Q_H)$. As in the proof of Lemma~\ref{lem: exsequence}, we begin by performing a Nielsen move which maps $(g_1,\ldots,g_n)$ to $\omega_1 := (h_1, \ldots, h_I, h'_{I+1}, \ldots, h_n')$.  Let $B$ be the subgroup of $A_H$ generated by $\{h'_{I+1}, \ldots , h'_n \}$, it satisfies $\rg(B) \leq \rg(A_H) =e$. Let $\{ b_1, \ldots, b_{e} \}$ be a generating subset of $B$. 
Observe that  $$n-I \geq n - \tilde \inc(Q) \geq 1 + \rg(A) + \tilde \rg(Q) \geq 1 +e+ \rg(Q_H) .$$
In particular, $n-I \geq 1+e \geq 1 + \rg(B)$. By Corollay~\ref{cor:lubotzky}\,(2) applied to $B$, there is a Nielsen move which maps $\omega_1$ to 
$\omega_2 := (h_1, \ldots, h_I, b_1, \ldots, b_e , 1 , \ldots , 1)$. Since $n-I-e \geq 1 + \rg(Q_H)$,   
$\omega_2$ can be mapped to $$(h_1, \ldots, h_I, b_1, \ldots, b_e , q_1 , \ldots , q_d , 1 , \ldots , 1)$$ and then to   
$$\omega_3 := (\tilde h_1, \ldots, \tilde h_I, b_1, \ldots, b_e , q_1 , \ldots , q_d , 1 , \ldots , 1), $$  where $\tilde h_j \in A_H$ for $1 \leq j \leq I$.
Now observe that $n-I \geq 1+e \geq 1+ \rg(B')$ for every subgroup $B'$ of $A_H$, and that the following subset of $H$ generates $A_H$:
$$\{ \tilde h_1, \ldots, \tilde h_I, b_1, \ldots, b_e , q_1 , \ldots , q_d \}.$$
By using successively Corollary~\ref{cor:lubotzky}\,(2) and the occurence of the identity element $1$ in $\omega_3$, we can map $\omega_3$ to $$(\tilde h_1, \ldots, \tilde h_I, u_1, \ldots, u_e , q_1 , \ldots , q_d , 1 , \ldots , 1) , $$ and then to $( u_1, \ldots, u_e , q_1 , \ldots , q_d , 1 , \ldots , 1)$. This concludes the proof. 
\end{proof}

We say that a finite group $G$ has {\bf{Jordan size}} at most  $(J,R)$ ($J$ and $R$ being non-negative integers) if $G$ contains a normal, abelian subgroup $A$ of rank $\leq R$ such that $\card(G/A)\leq J$. Theorem \ref{thm:jordan_size} immediately implies:

\begin{cor}
\label{cor:epi_JR}
Let $G$ be a finite group of Jordan size at most $(J,R)$. If
$$ n \geq 1 + R + \max\{\tilde\rg(Q)+\tilde\inc(Q)\; ; \; Q {\text{ is a group with }} \card(Q) \leq  J\}$$ 
then the orbits of  $\Aut(\F_n)$ in $\Rep(\F_n; G)$ are exactly the subsets $\Epi(\F_n; H)$ where $H$ is any subgroup of $G$. 
\end{cor}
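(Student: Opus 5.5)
The corollary is a direct consequence of Theorem~\ref{thm:jordan_size}, and the plan is simply to check that its hypotheses are met. Since $G$ has Jordan size at most $(J,R)$, we fix a normal abelian subgroup $A\subset G$ with $\rg(A)\leq R$ and $\card(G/A)\leq J$. We then set $Q=G/A$, which gives the exact sequence $0\to A\to G\to Q\to 1$ required by Theorem~\ref{thm:jordan_size}.

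The only thing to verify is the numerical inequality $n\geq 1+\rg(A)+\tilde\rg(Q)+\tilde\inc(Q)$. First, $\rg(A)\leq R$ holds by the choice of $A$. Second, $Q$ is a group with $\card(Q)\leq J$, so $\tilde\rg(Q)+\tilde\inc(Q)$ is bounded by the maximum appearing in the hypothesis. This maximum is finite: up to isomorphism there are only finitely many groups of order at most $J$, and both quantities are bounded by $\log_2 J$ by~\eqref{eq:d_ic_lc}. Combining the two bounds gives
\[
1+\rg(A)+\tilde\rg(Q)+\tilde\inc(Q)\leq 1+R+\max\{\tilde\rg(Q')+\tilde\inc(Q')\;;\;\card(Q')\leq J\}\leq n .
\]

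With the inequality in hand, Theorem~\ref{thm:jordan_size} applies and yields exactly the stated description of the orbits as the sets $\Epi(\F_n;H)$. There is no genuine obstacle in this argument. The only point needing care is that the maximum is taken over a finite set of isomorphism classes, so it is a well-defined integer; this holds since it depends only on $J$.
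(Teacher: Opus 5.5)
Your proposal is correct and matches the paper, which states the corollary as an immediate consequence of Theorem~\ref{thm:jordan_size}. As you do, it applies that theorem to the sequence $0\to A\to G\to G/A\to 1$ given by the Jordan-size hypothesis, using $\rg(A)\leq R$ and $\card(G/A)\leq J$ to bound $1+\rg(A)+\tilde\rg(G/A)+\tilde\inc(G/A)$ by the stated threshold.
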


\section{Compact Lie groups}

\subsection{Peter-Weyl theorem} 
\label{par:peter-weyl}
One consequence of Peter-Weyl theory is that every compact Lie group admits a faithful continuous representation on a finite dimensional vector space. Moreover, any compact subgroup of $\GL_m(\R)$ is in fact an algebraic subgroup of $\GL_m(\R)$ (see~\cite{mneimne-testard}, p.78), and the connected components of the group (with respect to the euclidean topology) coincide with its irreducible components (for the Zariski topology). Altogether, we obtain the following theorem.
 
\begin{thm}[Peter-Weyl] If $G$ is a compact Lie group, there is an integer $m$, a real algebraic group $G'(\R)\subset\GL_m(\R)$ and an isomorphism of Lie groups $G\to G'(\R)$. 
\end{thm}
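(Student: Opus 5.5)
The plan is to assemble three ingredients: a faithful finite-dimensional representation, the passage from a unitary to an orthogonal group, and the algebraicity of compact linear groups. We then check that the real points of the resulting algebraic group, with the induced Lie group structure, are isomorphic to $G$.

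\emph{Step 1: a faithful representation.} By the Peter--Weyl theorem, the matrix coefficients of finite-dimensional unitary representations separate the points of $G$. For each $g\neq 1$ we can therefore find a finite-dimensional representation $\pi_g$ with $\pi_g(g)\neq \Id$. The kernels $\ker \pi$ form a family of closed normal subgroups whose intersection is trivial. Since $G$ is a compact Lie group, it has no small subgroups: a neighbourhood $U$ of $1$ contains no nontrivial subgroup. By compactness of $G\setminus U$, finitely many kernels already have intersection inside $U$, and that intersection is a subgroup, so it is trivial. Taking the direct sum of these finitely many representations gives a faithful continuous representation $G\to \UU(k)$. Composing with the realification $\UU(k)\subset \OO(2k)\subset \GL_{2k}(\R)$, and setting $m=2k$, we get an injective continuous homomorphism $\iota\colon G\to\GL_m(\R)$. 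Its image is compact, and $\iota$ is a homeomorphism onto its image because $G$ is compact and $\GL_m(\R)$ is Hausdorff.

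\emph{Step 2: algebraicity.} Following the cited result in~\cite{mneimne-testard}, a compact subgroup $K\subset \GL_m(\R)$ is the zero set of a family of real polynomials. The argument runs as follows. Let $Z$ be the Zariski closure of $K$; it is a real algebraic group containing $K$. Take $x\in Z\setminus K$. Since $K$ is compact, the compact sets $K$ and $Kx$ are disjoint, and by Stone--Weierstrass there is a polynomial $f$ separating them. Averaging $f$ over $K$ with the Haar measure, we get $F(y)=\int_K f(ky)\,dk$. Because left translation acts on polynomials of bounded degree through a finite-dimensional representation, $F$ is again a polynomial. It is constant on $K$ and takes a different value at $x$, hence a different value on $Kx$. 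Then $F-F(1)$ vanishes on $K$ but not at $x$, which contradicts $x\in Z$. So $K=Z(\R)$ is the set of real points of the algebraic group $G'$ defined by these polynomials.

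\emph{Step 3: the Lie structure.} The set $G'(\R)$ is a closed subgroup of $\GL_m(\R)$, so it is an embedded Lie subgroup by Cartan's closed subgroup theorem. The map $\iota\colon G\to G'(\R)$ is a continuous bijective homomorphism between Lie groups. It is therefore smooth, since continuous homomorphisms of Lie groups are analytic, and a continuous bijective homomorphism of Lie groups with compact (hence $\sigma$-compact) source is a diffeomorphism by the open mapping theorem for Lie groups. Hence $\iota$ is an isomorphism of Lie groups. The statement about components is not needed for the theorem as stated.

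The main obstacle is Step 2, which is the only nonformal point. Specifically, it requires knowing that the Haar average of a polynomial is again a polynomial; this holds because the space of polynomials of degree at most $d$ is invariant under translation by $\GL_m(\R)$. It also requires separating $K$ from $Kx$, which uses that $Kx$ is compact and disjoint from $K$, since $x\notin K$ and $K$ is a group.
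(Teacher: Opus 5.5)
Your proposal is correct and follows the same route as the paper, which combines the faithful finite-dimensional representation given by Peter--Weyl with the algebraicity of compact subgroups of $\GL_m(\R)$ (cited from \cite{mneimne-testard}); you also prove both ingredients, using the no-small-subgroups argument and Haar averaging of a separating polynomial. The one point to state explicitly is that $f$ must separate $K$ and $Kx$ with a gap (say $f<1/3$ on $K$ and $f>2/3$ on $Kx$), so that the averages $F(1)$ and $F(x)$ are guaranteed to differ.
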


More precisely, there is an equivalence of category between compact Lie groups and compact real algebraic subgroups of $\GL_m(\R)$, every continuous homomorphism being automatically algebraic (since its graph is closed). 

Since every compact subgroup of $\GL_m(\R)$ is contained in a conjugate of the orthogonal group $\OO_m(\R)$, one can furthermore assume $G'\subset \OO_m(\R)$. Using complex representations and unitary groups instead of real representations, we get a similar result with $G'\subset \UU_m(\C)\subset\GL_m(\C)$ (doing so, the optimal value of $m$ can be smaller than for embeddings into orthogonal groups). In what follows, we use embeddings into $\GL_m(\C)$ (or $\UU_m(\C)$), {\emph{with $\GL_m(\C)$ considered as a real Lie group}}.

\subsection{Jordan theorem} 
The classical Jordan's theorem says that a finite subgroup of $\GL_m(\C)$ has Jordan size at most $(J(m),R(m))$ where $R(m)=m$ and $J(m)$ depends only on $m$, see~\cite{Breuillard:Jordan} or~\cite{Curtis-Reiner} for two proofs of this result. We shall always denote by $J(m)$ the optimal constant for which Jordan's theorem holds, and call it the {\bf{Jordan constant}}. 

\begin{rem}
The permutation group $\Bij_{m+1}$ embeds in $\GL_m(\R)$, so  $J(m)\geq (m+1)!$ for all $m$, and Collins proved in~\cite{Collins:Jordan} that 
$J(m)=(m+1)!$ as soon as $m\geq 71$ (he also computed $J(m)$ for $m\leq 70$). 
\end{rem}

We shall need the following version of Jordan's theorem, which is perhaps less known. If $G$ is a Lie group then $G^{\#}$ denotes the finite group of connected components of $G$: it is the quotient of $G$ by its neutral component $G^\circ$.

\begin{thm}[Jordan]\label{thm:jordan_general_version}
Let $m$ be a positive integer. Given any real algebraic subgroup $G$ of $\UU_m(\C)$ (resp.\ any complex algebraic subgroup of $\GL_m(\C)$) 
there is an exact sequence $0\to A\to G^{\#} \to Q\to 1$ where $A$ is abelian of rank $\leq m$ and $\card(Q)\leq J(m)$ (where  $J(m)$ is the Jordan constant).
\end{thm}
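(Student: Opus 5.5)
The plan is to reduce the statement to the classical Jordan theorem for finite subgroups of $\GL_m(\C)$, by producing a finite subgroup $F\subset G$ that meets every connected component of $G$. Concretely, I want a finite group $F\subset G$ with $F G^\circ=G$; then $G^{\#}=G/G^\circ\simeq F/(F\cap G^\circ)$. Applying Jordan's theorem to $F$ gives a normal abelian subgroup $A_F\subset F$ of rank $\leq m$ with $\card(F/A_F)\leq J(m)$. Let $A$ be the image of $A_F$ in $G^{\#}$. It is abelian and normal, being the image of a normal subgroup under a surjection, and its rank is $\leq m$, since it is a quotient of an abelian group of rank $\leq m$. The quotient $Q=G^{\#}/A$ is a quotient of $F/A_F$, so $\card(Q)\leq J(m)$. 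This yields the required exact sequence $0\to A\to G^{\#}\to Q\to 1$.

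The key step, and the main obstacle, is the existence of such a finite $F$. Here I would invoke the Borel--Serre result mentioned in the introduction: an algebraic group $G$ over a field of characteristic zero (real or complex) contains a finite subgroup $F$ with $G=F\cdot G^\circ$. The idea behind it runs as follows. Let $T$ be a maximal torus of $G^\circ$ (a maximal compact torus in the unitary case) and $N=N_G(T)$. By conjugacy of maximal tori in $G^\circ$, a Frattini argument gives $G=N\cdot G^\circ$. Next, $N/N^\circ$ is finite, and $N^\circ=T$ when $G$ is compact (more generally $N^\circ=Z_{G^\circ}(T)^\circ$). So we are reduced to an extension $1\to T\to N'\to E\to 1$ with $E$ finite, where it remains to find a finite subgroup of $N'$ surjecting onto $E$. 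The extension class lies in $H^2(E;T)$, which is killed by $e=\card(E)$. Since $T$ is divisible and $T[e]$ is finite, the class comes from $H^2(E;T[e])$. Hence there is a finite subgroup $F'\subset N'$, an extension of $E$ by a subgroup of $T[e']$ for a suitable $e'$, mapping onto $E$.

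In the compact case, which is the one we use, this argument is clean. For the complex algebraic case I would either pass to a maximal compact subgroup $K$ of $G(\C)$, which meets every connected component and reduces everything to the unitary case, or simply cite Borel--Serre directly.

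Finally, I would check the bookkeeping. The embedding $F\subset G\subset\GL_m(\C)$ is what lets us use the constant $J(m)$. The rank bound needs $\rg$ to be non-increasing under quotients of abelian groups, which is immediate. No bound on $\card(F)$ is needed, since Jordan's theorem is uniform over all finite subgroups of $\GL_m(\C)$.
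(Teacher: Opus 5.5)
Your proof is correct and follows the paper's argument. You take a finite subgroup $F\subset G$ meeting every component, apply the classical Jordan theorem to $F$, and push $A_F$ down to $G^{\#}$. The paper simply cites Brion's Theorem~1.1 (or Borel--Serre, Lemma~5.11) for $F$, while you also sketch why it exists, via $N_G(T)$ and lifting the extension class to $H^2(E;T[e])$. The paper re-derives the rank bound by diagonalizing $A_F$ in $(\C^\times)^m$, whereas you read it directly off the statement of Jordan's theorem; this difference is only cosmetic.
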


\begin{proof}
Let $\pi \colon G\to G^{\#}$ be the quotient map. Considering $\GL_m(\C)$ as a real algebraic group and $G$ as an algebraic subgroup, Theorem~1.1 of~\cite{Brion:Pacific2015} 
(see also Lemma 5.11 in~\cite{Borel-Serre:1964}) provides a finite subgroup  $F\subset G$ such that $\pi(F)=G^{\#}$. 

Applied  to $F$, the classical Jordan's theorem
 gives a normal subgroup $A_F\subset F$  such that  $\card(F/A_F)\leq J(m)$. 
Since $A_F$ is a finite abelian subgroup of $\GL_m(\C)$, it is diagonalizable, and it can be seen as a subgroup of  $(\C^\times)^m$. Such a group has rank $\leq m$, for every finite subgroup of  $(\R/\Z)^m$ is generated by $\leq m$ elements (every discrete subgroup of $\R^m$ has rank $\leq m$). To conclude, set $A = \pi(A_F)\subset G^{\#}$ and $Q = G^{\#} / A$. The group $A$ is normal in~$G^{\#}$, of rank $\leq m$, and of index $\leq J(m)$. 
\end{proof}

\subsection{Generators of finite groups} 

\begin{thm}[Kov\'acs and Robinson] 
\label{thm:kovacs-robinson}
Every finite subgroup of $\GL_m(\C)$ is generated by at most $\lfloor 3m/2\rfloor$ elements.
\end{thm}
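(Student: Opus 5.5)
We do not intend to reprove this classical result in full. We would quote it from Kov\'acs and Robinson and only indicate the structure of the argument we would follow if a self-contained proof were needed. The plan is to reduce generation of a finite subgroup $G\subset \GL_m(\C)$ to generation of its Sylow subgroups, and then bound the rank of finite $p$-subgroups of $\GL_m(\C)$ by induction on $m$.

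\emph{Step 1: reduction to $p$-groups.} We would use the Guralnick--Lucchini inequality $\rg(G)\leq \max_p \rg(P)+1$, where $P$ runs over the Sylow subgroups of $G$. This reduces the theorem to showing $\rg(P)\leq \lfloor 3m/2\rfloor-1$ for every finite $p$-subgroup $P\subset \GL_m(\C)$. This reduction fails as stated in small cases. For instance, when $m=1$ the group is cyclic and we must not lose the extra $+1$. Likewise, the elementary abelian group $(\Z/2\Z)^m$ acting diagonally needs $m$ generators. So these small or abelian cases would be treated directly: an abelian $G$ is diagonalizable, hence has rank $\leq m\leq \lfloor 3m/2\rfloor$, as in the proof of Theorem~\ref{thm:jordan_general_version}. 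We would keep the $+1$ only where the Sylow bounds leave room for it.

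\emph{Step 2: $p$-groups.} A finite $p$-group $P$ is monomial. Writing $\C^m$ as a sum of irreducible $P$-modules of dimensions $m_1+\cdots+m_k=m$, the group $P$ embeds in $\prod_i P_i$, where each $P_i\subset \GL_{m_i}(\C)$ is irreducible. Each $P_i$ is an extension of a diagonal abelian group $D_i$, of rank $\leq m_i$, by a transitive $p$-subgroup $T_i$ of $\Bij_{m_i}$. For such $P_i$ we would bound the rank using two facts. First, a transitive $p$-subgroup of $\Bij_{p^a}$ is generated by few elements: by the known estimate for iterated wreath products, $\rg(T_i)\leq \lfloor m_i/p\rfloor$ roughly. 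Second, as a module over $T_i$, the group $D_i$ is cyclic-ish, so far fewer than $m_i$ extra generators are needed. Counting the diagonal part with at most $m$ generators and the permutation parts with at most $m/2$ (the worst case being $p=2$) produces the bound $3m/2$. The passage from $\prod_i P_i$ to the subdirect product $P$ would be handled with the same normal-subgroup estimate $\rg(G)\leq \rg(G/N)+\rg(N)$ that is used above for $\inc$.

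The main obstacle is Step 2 in its sharp form. Controlling ranks of subgroups of products, and of the abelian normal part viewed as a module, without losing additive constants requires the detailed wreath-product analysis of Kov\'acs--Robinson. The general Step 1 inequality also relies on the classification of finite simple groups. For the purposes of this paper only \emph{some} bound $\rg(G)\leq c\,m$ is needed. A weaker and easier route suffices for that: combine Theorem~\ref{thm:jordan_general_version} with $\rg(G)\leq \rg(A)+\rg(G/A)\leq m+\log_2 J(m)$, which is how we would fall back if the sharp constant were not essential.
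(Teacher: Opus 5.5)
Quoting Kov\'acs and Robinson is exactly what the paper does: it gives no argument, only the reference, so as a citation your proposal agrees with the paper. The self-contained outline, however, has a genuine gap, and it is not where you locate it.

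\textbf{Step~1 cannot work.} Step~1 needs $\rg(P)\leq\lfloor 3m/2\rfloor-1$ for every finite $p$-subgroup $P\subset\GL_m(\C)$, and this is false in every dimension.
\begin{itemize}
\item Take $P_2=\langle X,Z,iI\rangle\subset\UU_2(\C)$, with $X=\left(\begin{smallmatrix}0&1\\1&0\end{smallmatrix}\right)$ and $Z=\mathrm{diag}(1,-1)$. It has order $16$, and all its squares and commutators lie in $\{\pm I\}$. So its Frattini quotient is $(\Z/2\Z)^3$ and $\rg(P_2)=3=\lfloor 3\cdot2/2\rfloor$.
\item Block sums of $k$ copies of $P_2$, plus a $\{\pm1\}$ block when $m=2k+1$, are $2$-subgroups of $\GL_m(\C)$ of rank exactly $\lfloor 3m/2\rfloor$.
\end{itemize}
Hence the Guralnick--Lucchini inequality can only certify $\lfloor 3m/2\rfloor+1$ as soon as a Sylow subgroup is extremal. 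For instance, it gives $4$ for $P_2\times\langle e^{2\pi i/3}I\rangle\subset\GL_2(\C)$, which needs only $3$ generators.

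So the obstruction is not in ``small or abelian'' cases. The diagonal $(\Z/2\Z)^m$ is harmless, since $m\leq\lfloor 3m/2\rfloor-1$ for $m\geq 2$. ``Keeping the $+1$ only where there is room'' is therefore not an argument. You would need an extra idea that controls $G$ itself when its Sylow $2$-subgroup is extremal.

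\textbf{Step~2 does not rescue it.} Done properly, Step~2 uses the crude count $\rg(P_i)\leq\rg(D_i)+\rg(T_i)\leq m_i+m_i/2$. It then inducts on $m$ through the kernel of the projection of $P$ onto one irreducible summand, which acts faithfully on the complement. This can at best give $\lfloor 3m/2\rfloor$ for $p$-groups, which is sharp. Your heuristic that $D_i$ is ``cyclic-ish'' as a $T_i$-module already fails for $P_2$: its diagonal part $\langle iI,Z\rangle\cong\Z/4\Z\times\Z/2\Z$ is not the normal closure of one element. Combined with Step~1, the outline yields $\lfloor 3m/2\rfloor+1$, not the statement.

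\textbf{The fallback.} Your bound $\rg(G)\leq\rg(A)+\rg(G/A)\leq m+\log_2 J(m)$ is correct. It is the finite-group version of the remark the paper makes after Theorem~\ref{thm:top_generators}. It would serve all uses of Theorem~\ref{thm:kovacs-robinson} in the paper (Corollary~\ref{cor:jordan_and_compact}, Remark~\ref{rem:N_diese}, Theorem~\ref{thm:top_generators}), at the cost of changing some constants. But it is not of the form $c\,m$, as you claim: since $J(m)\geq(m+1)!$, it grows like $m\log_2 m$. And it does not give the constant $\lfloor 3m/2\rfloor$ in the statement.
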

In other words, $\rg(F)\leq \lfloor 3m/2\rfloor$ if $F\subset \GL_m(\C)$ is finite. This is proven in~\cite{kovacs-robinson}. 
Let 
\begin{equation}\label{eq:N_diese}
N_{\#}(m) =  1+  5m/2 + \log_2(J(m))  . 
\end{equation}

\begin{cor}
\label{cor:jordan_and_compact}
Let $G$ be a compact subgroup of $\GL_m(\C)$.
If $n\geq   N_{\#}(m)$, then
\begin{enumerate}[\rm (1)]
\item every $\rho\in \Rep(\F_n; G^{\#})$ is redundant, and
\item  the orbits of  $\Aut(\F_n)$ in $\Rep(\F_n; G^{\#})$ are exactly the subsets $\Epi(\F_n; H)$ where $H$ is any subgroup of $G^{\#}$. 
\end{enumerate}
\end{cor}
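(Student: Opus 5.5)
The plan is to combine Theorem~\ref{thm:jordan_general_version} with Lemma~\ref{lem: exsequence} and Theorem~\ref{thm:jordan_size}. The only remaining work is to bound the quantities $\rg(A)$, $\tilde\rg(Q)$ and $\tilde\inc(Q)$ by expressions in $m$.

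First, since $G$ is a compact subgroup of $\GL_m(\C)$, it is conjugate into $\UU_m(\C)$ and is a real algebraic subgroup there (Section~\ref{par:peter-weyl}). Theorem~\ref{thm:jordan_general_version} therefore gives an exact sequence $0\to A\to G^{\#}\to Q\to 1$ with $A$ abelian, $\rg(A)\leq m$, and $\card(Q)\leq J(m)$.

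Next, we bound the invariants of $Q$. From \eqref{eq:d_ic_lc} we get $\tilde\inc(Q)\leq \lc(Q)\leq \log_2(\card(Q))\leq \log_2(J(m))$. For $\tilde\rg(Q)$ we want a bound of the form $3m/2$, since $N_\#(m)=1+m+3m/2+\log_2 J(m)$ with $5m/2 = m + 3m/2$. By the proof of Theorem~\ref{thm:jordan_general_version}, $G^{\#}=\pi(F)$ and $Q$ is a quotient of $F$, where $F$ is a finite subgroup of $\GL_m(\C)$. Let $Q'$ be any subgroup of $Q$. Its preimage $F'\subset F$ is a finite subgroup of $\GL_m(\C)$, so Theorem~\ref{thm:kovacs-robinson} gives $\rg(F')\leq\lfloor 3m/2\rfloor$. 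Since $Q'$ is a quotient of $F'$, it follows that $\rg(Q')\leq \lfloor 3m/2\rfloor$, and hence $\tilde\rg(Q)\leq 3m/2$.

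With these bounds, $n\geq N_\#(m)$ implies
\[
n\geq 1+\rg(A)+\tilde\rg(Q)+\tilde\inc(Q).
\]
Assertion~(2) then follows directly from Theorem~\ref{thm:jordan_size} applied to $G^{\#}$. Assertion~(1) follows from Lemma~\ref{lem: exsequence}, whose hypothesis $n\geq \rg(A)+\tilde\inc(Q)+1$ is weaker than the one just obtained.

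The only step requiring care is the bound on $\tilde\rg(Q)$. The point is to apply the Kov\'acs--Robinson theorem to finite subgroups of $\GL_m(\C)$ that surject onto subgroups of $Q$, and this uses the finite subgroup $F$ provided by the Borel--Serre/Brion result rather than $G^{\#}$ itself. The rest is bookkeeping of the constants.
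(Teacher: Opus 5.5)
Your proposal is correct and follows the paper's proof essentially step by step: the paper uses the Jordan-type sequence for $G^{\#}$, bounds $\tilde\rg(Q)\leq\lfloor 3m/2\rfloor$ via Kov\'acs--Robinson applied to the finite subgroup $F$ with $\pi(F)=G^{\#}$, and uses $\tilde\inc(Q)\leq\log_2 J(m)$, before invoking the redundancy lemma for abelian-by-$Q$ extensions and the orbit theorem for Jordan size. Your preimage argument simply makes explicit the paper's remark that the generation bound passes from subgroups of $F$ to subgroups of its quotient $Q$.
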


\begin{proof}
The group $G$ is identified with a subgroup of $\UU_m(\C)$. By Theorem \ref{thm:jordan_general_version}, there is an exact sequence $0\to A\to G^{\#} \to Q\to 1$ where $A$ is abelian of rank $\rg(A) \leq m$ and $\card(Q)\leq J(m)$.
The proof of Theorem \ref{thm:jordan_general_version} specifies that there exists a finite subgroup $F$ of $G$ with $\pi(F) = G^{\#}$. 
By Theorem~\ref{thm:kovacs-robinson}, any subgroup of $F$ is generated by $\lfloor 3m/2\rfloor$ elements.
The same property then holds for $\pi(F) = G^{\#}$ and for $Q$. Hence $\tilde\rg(Q)\leq \lfloor 3m/2\rfloor$ and $\rg(A) +  \tilde\rg(Q) \leq 5m/2$. Moreover, Inequality~\eqref{eq:d_ic_lc} implies $\tilde\inc(Q) \leq \log_2(\card (Q)) \leq  \log_2(J(m))$. We deduce 
$$ N_{\#}(m) =  1 +  5m/2 + \log_2(J(m)) \geq 1 + \rg(A) +  \tilde\rg(Q) + \tilde\inc(Q) .$$
The conclusion follows  by applying Lemma~\ref{lem: exsequence} and Theorem~\ref{thm:jordan_size} to $G^{\#}$.
\end{proof}

\begin{rem}
\label{rem:N_diese}
As a by-product of the preceding proof, we get $\rg(G^{\#}) \leq \lfloor 3m/2\rfloor$. This immediately implies $N_{\#}(m) \geq  2 +  \rg(G^{\#})$ for every $m \geq 1$, which will be used in the proof of Lemma \ref{lem:dense_in_epifin}.
\end{rem}

The constant $N_{\#}(m)\simeq m\log_2(m)$ is rather small. To get it we used~\cite{Collins:Jordan} and~\cite{kovacs-robinson}, which both rely on the classification of finite simple~groups, but an upper
bound $N_{\#}(m) \lesssim m^2\log_2(m)$ can be obtained from the  theorem of Schur described in~\cite{Curtis-Reiner}, page 258.

\begin{rem} Example~\ref{eg:zpr} shows that Corollary~\ref{cor:jordan_and_compact} is almost optimal: the best constant $N_\#(m)$ such that this corollary holds is necessarily $\geq m$.\end{rem}

\subsection{Generators of compact groups} \label{par:generators_of_compact_lie_groups}
Any connected, compact Lie group $G$ can be written as a quotient $(K\times T)/F$ where 
\begin{itemize}
\item $K$ is a product $K_1\times \cdots K_l$ of simply connected, almost simple, compact Lie groups $K_i$,
\item $T$ is a torus $\R^k/\Z^k$, and
\item $F$ is a finite subgroup of $K\times T$ that does contain any non-trivial element of $\{1\}\times T$.
\end{itemize}
A theorem of Kuranishi shows that  $K$ is topologically generated by $2$ elements (see~\cite{Kuranishi}). 
More precisely, there is an open and dense subset $U_K\subset K\times K$, of total Haar measure, such that every pair $(g,h)\in U_K$ generates a dense subgoup of $K$ (see Lemma~1.4 in~\cite{Gelander:Aut(Fn)2008}).
The following extension of these results will be used in Section \ref{12main}.

\begin{thm}
	\label{thm:top_generators}
	Let $G$ be a compact subgroup of $\GL_m(\C)$. Then, 
	\begin{enumerate}[\rm (1)]
\item $G$  is topologically generated by at most $\lfloor 2+(3m/2)\rfloor$ elements;
\item if $G$ is connected, pairs of elements generating $G$ form a subset of total Haar measure in $G\times G$.
\end{enumerate}
\end{thm}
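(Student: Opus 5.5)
The plan is to prove assertion (2) first and then deduce (1) from (2) together with the finite-group results of the previous subsection.

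For (2), let $G$ be connected and write $G=(K\times T)/F$ as above. I would show that the set of pairs $(g,h)\in G\times G$ that do \emph{not} generate a dense subgroup is a countable union of closed sets of Haar measure zero. A pair generates a proper closed subgroup $H$ exactly when its image in some quotient is degenerate. Two cases can occur.
\begin{itemize}
\item The projection $G\to G/[G,G]=T'$, a torus, can fail to generate. A pair $(a,b)\in T'\times T'$ generates a dense subgroup unless some non-trivial character $\chi$ satisfies $\chi(a)=\chi(b)=1$. There are countably many characters, and each condition defines a null set.
\item The image of the pair in the semisimple part $K/(\text{center})$ can fail to generate. This is handled by Kuranishi's theorem in the form of Lemma~1.4 of~\cite{Gelander:Aut(Fn)2008}: the good set $U_K$ has full measure.
\end{itemize}
It remains to combine the two factors. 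If a closed subgroup $H\subset G$ surjects onto both $T'$ and the adjoint quotient $G/Z(G)^\circ$, then $H^\circ$ contains $[G,G]$, because $[H,H]$ surjects onto the semisimple part. Since $H$ also surjects onto $G/[G,G]$, this gives $H=G$.

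The technical point in (2) is a fibration issue. The pullback to $G\times G$ of a null set in $T'\times T'$ or in $(G/Z)^2$ is null, because Haar measure projects to Haar measure. So a full-measure set of pairs satisfies both conditions, which proves (2).

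For (1), let $G\subset\GL_m(\C)$ be compact. By (2), two generic elements $g_1,g_2\in G^\circ$ topologically generate $G^\circ$. By Remark~\ref{rem:N_diese}, $\rg(G^{\#})\leq\lfloor 3m/2\rfloor$, so we can pick lifts $g_3,\dots,g_{k+2}\in G$ of a generating set of $G^{\#}$ with $k\leq\lfloor 3m/2\rfloor$. The closed group generated by $g_1,\dots,g_{k+2}$ contains $G^\circ$ and surjects onto $G^{\#}$, so it equals $G$. This gives at most $\lfloor 2+3m/2\rfloor$ topological generators.

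The main obstacle is the structural step in (2): checking that a closed subgroup surjecting onto both the abelianization and the adjoint group must be all of $G$. This step has to handle the finite group $F$ and the finite center carefully. I would first try to use that $H^\circ$ is normalized by a dense set, and then apply Goursat-type arguments at the Lie algebra level. Under the two surjectivity hypotheses, $\mathfrak h$ must be an ideal projecting onto both $\mathfrak t$ and $[\mathfrak g,\mathfrak g]$, which forces $\mathfrak h=\mathfrak g$.
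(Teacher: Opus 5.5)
Your proof is correct and follows essentially the paper's route: Kuranishi's theorem in the form of Gelander's full-measure set $U_K$ for the semisimple part, generic pairs in the torus, a Goursat-type Lie algebra argument (a semisimple and an abelian Lie algebra have no common nonzero quotient) to combine them, and then Brion's finite subgroup plus Kov\'acs--Robinson, as packaged in Remark~\ref{rem:N_diese}, for the component group. The only difference is cosmetic: you test generation through the two quotients $G/[G,G]$ and $G/Z(G)^\circ$ and pull back null sets, whereas the paper lifts to the cover $K\times T$ and argues with product pairs $((g,s),(h,t))$.
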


\begin{proof} 
Write $G^\circ=(K\times T)/F$, as above.
Pick a pair $(g,h)$ in the open set $U_K$, choose $s, t\in T$ with $\langle s, t\rangle_{top}=T$, and consider the pair $S=((g,s), (h,t))$. 
The closed subgroup $H:=\langle S\rangle_{top}\subset K\times T$ projects onto $K$ under the first projection and onto $T$ under the second. At the level of Lie algebras, $\mathfrak{h}\subset \mathfrak{k}\oplus \mathfrak{t}$ projects surjectively onto $\mathfrak{k}$ and $\mathfrak{t}$. Since $\mathfrak{k}$ is semi-simple, this implies that $\mathfrak{h}= \mathfrak{k}\oplus \mathfrak{t}$. Hence $H=K\times T$. This proves that $G^\circ$ is topologically generated by $2$ elements; more precisely, the generating pairs form a dense subset of full Haar measure in $G^\circ \times G^\circ$. This proves the second assertion.

For the first assertion, we combine Theorem~1.1 of~\cite{Brion:Pacific2015}, which provides a finite subgroup  of $G$ intersecting every connected component of $G$, with Theorem~\ref{thm:kovacs-robinson}, and with the second assertion (applied to $G^\circ$).
\end{proof}

In this proof, if we replace the reference to~Theorem~\ref{thm:kovacs-robinson} by Jordan's theorem, we obtain a similar statement  with $2+m+J(m)$ in place of
$\lfloor 2+(3m/2)\rfloor$. 

\section{A redundancy Theorem}

\begin{thm}\label{thm: rdu}
For every  integer $m \geq 1$, there exist a positive integer $N(m)$ such that 
$ \Rep (\F_n ; \UU_m(\C)) \subset \Red (\F_n ; \UU_m(\C)) $ for all  $n\geq N(m)$.
Equivalently, 
for every compact subgroup $G$ of $\GL_m(\C)$ and every $n\geq N(m)$,  we have 
$$  \Epitop (\F_n ; G) \subset \Redtop (\F_n ; G). $$
\end{thm}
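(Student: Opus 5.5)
The plan is an induction on $\dim G$, using $\dim G\leq m^2$. The finite case is the base, and the connected part is handled by a dimension-greedy argument. It suffices to treat $\rho\in\Epitop(\F_n;G)$ with $G\subset \UU_m(\C)$ compact, and to find a Nielsen move after which the last coordinate can be dropped without changing the closure.

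\emph{Base case ($\dim G=0$).} Here $G=G^{\#}$ is finite. Corollary~\ref{cor:jordan_and_compact}(1) gives redundancy as soon as $n\geq N_{\#}(m)$.

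\emph{Reduction to the connected part.} For general $G$, project to $G^{\#}$. Since $\rho^{\#}$ is onto, Corollary~\ref{cor:jordan_and_compact}(2) together with Remark~\ref{rem:N_diese} lets me apply a Nielsen move so that $\rho^{\#}$ becomes $(q_1,\dots,q_d,1,\dots,1)$ with $d\leq\lfloor 3m/2\rfloor$. Concretely, $g_1,\dots,g_d$ surject onto $G^{\#}$ and $g_{d+1},\dots,g_n\in G^\circ$. By Remark~\ref{Delta}, every later modification of the last block by elements of $\langle g_1,\dots,g_{n-1}\rangle$ is again a Nielsen move. So the whole problem lives inside $G^\circ$, modulo the conjugation action of the first $d$ entries.

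\emph{Greedy dimension step.} Starting from $S_0=\{g_1,\dots,g_d\}$, I add one of the remaining entries whenever doing so strictly increases $\dim\langle S\rangle_{top}$. This stops after at most $m^2$ additions and produces a set $S$ of size $\leq d+m^2$. Set $L=(\langle S\rangle_{top})^\circ$. Every remaining entry $g$ satisfies $\dim\langle S,g\rangle_{top}=\dim L$. Hence $g$ normalizes $L$, so $L$ is normal in $G$ because $G$ is topologically generated by all the entries. I then pass to $\bar G=G/L$, a compact Lie group with $\dim\bar G<\dim G$ (unless $L=G^\circ$, in which case I go directly to the finite case below). I realize $\bar G$ inside some $\GL_{m'}(\C)$ by Peter--Weyl and apply the induction hypothesis to the images of the entries outside $S$, together with $S$, in $\bar G$.

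This yields Nielsen moves, acting only on the entries outside $S$ and using elements of $\langle S\rangle$, after which one entry lies in $L$ modulo the others. Since $L\subset\langle S\rangle_{top}$, that entry is topologically redundant. One must check that "redundant modulo $L$" lifts to genuine redundancy. It does, because $L$ is already contained in the closure generated by the kept entries. When $L=G^\circ$, the quotient $G/L$ is a quotient of $G^{\#}$, and I conclude with Lemma~\ref{lem: exsequence} applied to the Jordan exact sequence of Theorem~\ref{thm:jordan_general_version}. The resulting bound has the shape $N(m)\approx m^2\,N_{\#}(m')$, which matches the $m^2\log_2 m$ order announced in~\eqref{eq:N(m)_intro}.

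\emph{Main obstacle.} I expect the hardest step to be controlling $m'$: I need the quotients $G/L$ to embed in $\GL_{m'}(\C)$ with $m'$ bounded in terms of $m$ alone, so that the induction yields a uniform $N(m)$. A naive Peter--Weyl embedding gives no such bound. My first attempt is to avoid embedding $G/L$ at all. Instead, I would apply Jordan's theorem to the finite group $N_G(L)/L\cdot C$, where $C$ is a centralizer of $L$. Alternatively, I would use that $G$ acts on $\mathfrak{g}/\mathfrak{l}\subset\mathfrak{gl}_m$, of dimension $\leq m^2$, which gives $m'\leq m^2$. In both cases the Jordan and Kov\'acs--Robinson constants stay bounded by functions of $m$.
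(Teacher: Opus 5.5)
\paragraph{There is a genuine gap, and it is the one you flag.} Your reduction to $G^\circ$ and the greedy construction of the normal subgroup $L$ are fine. The lifting step is also correct, provided you apply the induction only to the sub-tuple of entries outside $S$; if the Nielsen moves were allowed to mix in the $S$-entries, the kept entries might no longer generate a closed group containing $L$.

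The problem is that your induction hypothesis concerns compact subgroups of $\GL_m(\C)$, with a constant depending on $m$. You then apply it to the quotient $G/L$, which is not a subgroup of $\GL_m(\C)$. Neither proposed repair works:
\begin{itemize}
\item The adjoint representation on $\mathfrak{g}/\mathfrak{l}$ is trivial on $L$, but it is far from faithful on $G/L$. If $G=\UU_1^m$ and $L$ is a subtorus, $\mathrm{Ad}$ is trivial while $G/L$ is a nontrivial torus.
\item Applying Jordan's theorem to $N_G(L)/L\cdot C$ is not an argument as stated.
\end{itemize}
Without a proven bound $m'\leq f(m)$ on the faithful dimension of every such quotient, iterated through up to $m^2$ induction steps, you get no uniform $N(m)$. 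Separately, the order $m^2N_\#(m')$ does not match $m^2\log_2 m$: $N_\#(m)$ is already of order $m\log_2 m$ because of the term $\log_2 J(m)$.

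\paragraph{The missing idea is that you never need quotients.} Induct instead on the closed subgroups generated by sub-tuples; these remain compact subgroups of the same $\GL_m(\C)$.
\begin{enumerate}
\item Arrange, as you do, that $\pi(g_1),\dots,\pi(g_k)$ generate $G^{\#}$ with $k\leq N_\#(m)-1$, and that $g_{k+1},\dots,g_n\in G^\circ$.
\item Put $H=\langle g_{k+1},\dots,g_n\rangle_{top}\subset G^\circ$. If $H\neq G^\circ$, then $\dim H<\dim G$, since a closed subgroup of $G^\circ$ of full dimension contains $G^\circ$. By induction the sub-tuple is redundant, so $\rho$ is redundant by Lemma~\ref{lem:redundancy_from_partial_redundancy}.
\item If $H=G^\circ$, set $K=\langle g_{k+1},\dots,g_{n-1}\rangle_{top}$. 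Either $K=G^\circ$, and then $g_n$ is redundant directly. Or $K$ is strictly smaller, and you apply the induction to $(g_{k+1},\dots,g_{n-1})$.
\end{enumerate}
With $\dim G\leq m^2$ as the induction parameter, this already gives $N(m)=(m^2+1)N_\#(m)$. The paper runs the same dichotomy but inducts on the length $\ell(G)\leq 2m-1$ of chains of closed connected subgroups (Theorem~\ref{thm:longest_chain_in_Um}). That gives $N(m)=2mN_\#(m)$, which is what produces the $m^2\log_2 m$ order.
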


 The proof is given in Section~\ref{par:proof:redundancy} below. It shows that
 \begin{equation}
 \label{eq:N_m}
 N(m)=2m N_\#(m)=2m (1+5m/2+\log_2(J(m)))
 \end{equation}
 works, but we do not claim this is optimal. 
 
\begin{rem}
\label{rem:redundant_vs_k-redundant}
Theorem~\ref{thm: rdu} implies that every homomorphism $\F_n\to \UU_m(\C)$ is $k$-redundant if $n\geq N(m)+k-1$.
\end{rem}

In parallel to the proof of Theorem~\ref{thm: rdu}, we shall also prove a version of the theorem in the category of linear algebraic groups. In this context, the notion of epimorphism and redundant representation
have to be considered with respect to the Zariski topology. Then, the theorem becomes the following.

\begin{thm}\label{thm: rdu_alg}
Let $ \bfk $ be a field of characteristic $0$. 
For every  integer $m \geq 1$, there exist an integer $Z(m)$ such that 
$ \Rep (\F_n ; \GL_{m}(\bfk) ) \subset \Red (\F_n ; \GL_{m}(\bfk)) $ for all $n\geq Z(m)$.
Equivalently, 
for every algebraic subgroup $G$ of $\GL_m(\bfk)$ and every $n\geq Z(m)$,  we have 
$  \Epitop (\F_n ; G) \subset \Redtop (\F_n ; G). $
\end{thm}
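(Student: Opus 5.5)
We prove Theorem~\ref{thm: rdu_alg} by induction on the dimension of the Zariski closure, mirroring the compact case with $Z(m)$ of size $2m\,N_\#(m)$ or similar. Fix $\rho\colon\F_n\to\GL_m(\bfk)$ and let $G$ be the Zariski closure of $\rho(\F_n)$. Since Zariski closure commutes with field extensions in characteristic $0$, we may assume $\bfk$ algebraically closed, and then, by the Lefschetz principle (finitely generated subfields embed into $\C$), that $\bfk=\C$. So $G$ is a complex algebraic subgroup of $\GL_m(\C)$, and we must find a free factor $A$ of rank $<n$ with $\rho(A)$ Zariski dense in $G$.

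\emph{Reduction to the neutral component.} First apply Theorem~\ref{thm:jordan_general_version} in its complex algebraic form. It gives $0\to A\to G^\#\to Q\to 1$ with $\rg(A)\le m$ and $\card(Q)\le J(m)$. Theorem~\ref{thm:kovacs-robinson}, applied to a finite subgroup $F\subset G$ with $\pi(F)=G^\#$ (Brion, Borel--Serre), bounds $\tilde\rg(Q)$. Theorem~\ref{thm:jordan_size} and Lemma~\ref{lem: exsequence} then apply whenever $n\ge N_\#(m)$. Using them, after a Nielsen move the tuple becomes $(g_1,\dots,g_k,h_{k+1},\dots,h_n)$ with the following properties: $k\le N_\#(m)-1$, the images of $g_1,\dots,g_k$ generate $G^\#$, and $h_j\in G^\circ$ for $j>k$. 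Each such move subtracts from later entries words in the $g_i$ to push them into $G^\circ$, as in the proof of $\inc(G)\le\inc(G/N)+\tilde\inc(N)$.

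\emph{Filtration by dimension.} Set $H_0$ to be the Zariski closure of $\langle g_1,\dots,g_k\rangle$. We then absorb the $h_j$ one block at a time. Let $H_i$ be the Zariski closure of the group generated by $g_1,\dots,g_k$ and the first $i$ blocks, each block of length $N_\#(m)$. At each stage, either $\dim H_{i+1}>\dim H_i$, or $H_{i+1}^\circ=H_i^\circ$. In the second case the new block only changes the finite group $H_{i+1}^\#$. Corollary~\ref{cor:jordan_and_compact}(1), or rather its algebraic analogue built from Theorem~\ref{thm:jordan_general_version}, shows that the block together with $g_1,\dots,g_k$ is redundant modulo $H_{i+1}^\circ$. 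Concretely, some element of the block can be moved, by multiplying with words in the others, into $H_i^\circ\cdot(\text{stuff already generated})$, and hence, by Remark~\ref{Delta}, into the group already generated. That element is then superfluous, which proves redundancy. Since $\dim G\le m^2$ and each strictly increasing step raises the dimension by at least one, only boundedly many dimension jumps can occur. Taking $n$ larger than $N_\#(m)$ times $(m^2+1)$ forces a non-jumping block, and that block yields redundancy. This gives $Z(m)\approx m^2N_\#(m)$. To get $2mN_\#(m)$ one would instead bound chains of connected algebraic subgroups of $\GL_m$, but the crude bound suffices for existence.

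\emph{Main obstacle.} The delicate step is the non-jumping case. We must show that an element $h$ in the new block can be replaced by an element of the group generated by everything else, using only Nielsen moves. Knowing $h\in H_{i+1}$ only up to the Zariski closure is not enough. We need $h$ to become \emph{equal} to a word in the other generators, or at least the Zariski closure must be unchanged when $h$ is deleted. The plan is to work with the finite quotient $H_{i+1}/H_i^\circ$ (note $H_i^\circ=H_{i+1}^\circ$ is normal in $H_{i+1}$). Apply Lemma~\ref{lem: exsequence} to this finite group, which has Jordan size $(J(m),m)$ by Theorem~\ref{thm:jordan_general_version} applied to $H_{i+1}$. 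This moves $h$ into $H_i^\circ$. Then $\langle\text{others}\rangle$ already has Zariski closure containing $H_i^\circ$, and deleting $h$ preserves the Zariski closure. The point to verify carefully is that the earlier generators' closure contains $H_i^\circ$ and surjects onto the relevant component group, so that the lemma's hypotheses hold uniformly.
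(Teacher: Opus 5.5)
Your argument is correct, but it is organized differently from the paper's. The paper proves, by induction on $\ell_a(G)$ (the maximal length of a chain of connected algebraic subgroups), that $n\ge(\ell+1)N_\#(m)$ suffices when $\ell_a(G)\le\ell$. After using redundancy in $G^\#$ to isolate $k<N_\#(m)$ entries generating $G^\#$ and pushing the others into $G^\circ$, it argues top-down. Either the tail, or the tail minus its last entry, generates a subgroup with smaller $\ell_a$, in which case induction plus Lemma~\ref{lem:redundancy_from_partial_redundancy} concludes; or the last entry is already superfluous.

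You instead build an increasing filtration $H_0\subset H_1\subset\cdots$ block by block. You pigeonhole on $\dim H_i\le m^2$ to find a stage with $H_{i+1}^\circ=H_i^\circ$. You then apply Lemma~\ref{lem: exsequence} once, to the component group of the \emph{intermediate} group $H_{i+1}$, which has Jordan size $(J(m),m)$ by Theorem~\ref{thm:jordan_general_version}.

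What your route buys is a non-recursive proof that needs only the trivial bound $\dim\GL_m=m^2$, with no appeal to Burness--Liebeck--Shalev. In it, the preliminary reduction to $G^\circ$ is actually unnecessary. The price is a constant of order $m^2N_\#(m)$ instead of the paper's $\frac12 m(m+5)N_\#(m)$. If you count strict inclusions $H_i^\circ\subsetneq H_{i+1}^\circ$ rather than dimension jumps, you need only $\ell_a(\GL_m)+1$ blocks. That recovers the paper's order here and, with $\ell(\UU_m)=2m-1$, exactly the $2m$ blocks of the compact case. Your Lefschetz reduction to $\bfk=\C$ is a legitimate step that the paper leaves implicit.

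Three points to tighten.

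First, the ``delicate step'' is automatic once you apply Lemma~\ref{lem: exsequence} to the images in $H_{i+1}^\#$ of the block \emph{alone}. Perform the resulting Nielsen moves inside the block, together with the final multiplication of $h$ by a word in the other block entries, producing $h'\in H_{i+1}^\circ=H_i^\circ$. The prefix ($g_1,\dots,g_k$ and blocks $1,\dots,i$) is then untouched and its Zariski closure is $H_i\supset H_i^\circ\ni h'$. No surjectivity onto a component group is needed.

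Second, applying redundancy to ``the block together with $g_1,\dots,g_k$'', as you first phrase it, is not enough as stated. The superfluous entry might then be a modified $g_j$ whose removal loses $H_i^\circ$.

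Third, your aside about reaching $2mN_\#(m)$ is inaccurate in the algebraic setting. Connected algebraic subgroups of $\GL_m$ form chains of quadratic length; already the unipotent upper triangular group, of dimension $\frac12 m(m-1)$, contains such chains. The linear bound $2m-1$ is specific to closed subgroups of $\UU_m$.
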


As we shall see, we can take $Z(m)=\frac{1}{2}m(m+5)N_\#(m)$. This answers positively Conjectures 4.2 and 4.3 of~\cite{Gelander:JournalAlgebra2024}, but   only when $n\geq Z(m)$.

\begin{rem}
\label{rem:Cohen-Vigdorovich}
In~\cite{Cohen-Vigdorovich}, the authors obtain a similar redundancy statement: {\emph{let $G\subset \GL_m(\C)$ be a simple complex linear group;
 if $(g_1, \ldots, g_n)$ generates a Zariski dense subgroup of $G$ and $n\geq a m^{10}$, for some universal constant $a>0$, 
then $(g_1, \ldots, g_n)$ is compressible}}.  Their proof relies on three main ingredients. Suppose, for simplicity, that $G\subset \GL_m(\C)$ is defined over $\Z$ and the $g_i$ are in $G(\Z)$. First, the $g_i$ generate $G(\Fp)$ for all large primes, by Weisfeiler's theorem. Second, 
the kernel of $G(\Z_p)\to G(\Fp)$ is the Frattini subgroup of $G(\Z_p)$, again for all large primes. Third, a recent theorem of Harper~\cite{Harper:2023} implies that $\inc(G(\Fp))\leq a m^{10}$ for some universal $a>0$.
\end{rem}

\subsection{Redundancy} 

\begin{lem}\label{lem:redundancy_from_partial_redundancy}
Let $G$ be a topological group. 
Suppose $\rho\colon \F_n\to G$ is a homomorphism and $\F_n=A\star B$ is a free decomposition such that $\rho_{\vert A}$ is redundant. 
Then, $\rho$ is redundant. 
\end{lem}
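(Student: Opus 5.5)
The idea is to take the free factor that witnesses the redundancy of $\rho_{\vert A}$ and enlarge it by $B$, so that the remaining factor of $A$ becomes a redundant factor for all of $\F_n$. Note that "$\rho_{\vert A}$ is redundant" is read with respect to its own image: redundancy of $\rho_{\vert A}\colon A\to G$ compares $\rho(A_1)$ with $\overline{\rho(A)}$, not with $\overline{\rho(\F_n)}$. So no extra hypothesis is needed.

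First, unpack the hypothesis. Since $\rho_{\vert A}$ is redundant, there is a decomposition $A=A_1\star A_2$ into a non-trivial free product, with $A_1$ and $A_2$ free of positive rank, such that
\[
\overline{\rho(A_1)}=\overline{\rho(A)}.
\]
Associativity of free products then gives a new decomposition
\[
\F_n=A\star B=(A_1\star A_2)\star B=(A_1\star B)\star A_2 .
\]
This is a non-trivial free product: $A_1\star B$ has rank $\mathrm{rk}(A_1)+\mathrm{rk}(B)\geq 1$, and $A_2$ has positive rank.

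Next, compare closures. Let $C=\overline{\rho(A_1\star B)}$, which is a closed subgroup of $G$.
\begin{itemize}
\item $C$ contains $\overline{\rho(A_1)}=\overline{\rho(A)}$, so it contains $\rho(A_2)$.
\item $C$ also contains $\rho(B)$ and $\rho(A_1)$.
\end{itemize}
Since $\F_n$ is generated by $A_1$, $A_2$ and $B$, and $C$ is a subgroup, we get $\rho(\F_n)\subset C$. As $C$ is closed, $\overline{\rho(\F_n)}\subset C$. The reverse inclusion $C\subset\overline{\rho(\F_n)}$ is obvious.

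Hence $\overline{\rho(A_1\star B)}=\overline{\rho(\F_n)}$, which is exactly the definition of redundancy of $\rho$, witnessed by the decomposition $(A_1\star B)\star A_2$.

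The only step needing care is the observation that $\overline{\rho(A)}\supset\rho(A_2)$ places $\rho(A_2)$ inside the closed subgroup $C$. Everything else is formal, using only that closed subgroups are stable under taking group closure.
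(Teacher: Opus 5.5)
Your proof is correct and is essentially the paper's argument. The paper works with tuples: after Nielsen moves it arranges $g_1\in\langle g_2,\dots,g_j\rangle_{top}$, so your $A_2$ is the rank-one factor $\langle a_1\rangle$ and your $A_1\star B$ is $\langle a_2,\dots,a_n\rangle$. It then observes, exactly as you do, that $\langle g_2,\dots,g_n\rangle_{top}$ contains $g_1$ and hence equals $\langle g_1,\dots,g_n\rangle_{top}$, so regrouping as $(A_1\star B)\star A_2$ rather than using Nielsen moves changes nothing of substance.
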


Let us simultaneously prove this result and rephrase it in terms of Nielsen
moves. We start with $(g_1, \ldots, g_n)\in G^n$.
The assumption means that,
up to some Nielsen move,
we have
$ \langle g_2, \ldots, g_{j}\rangle_{top}=  \langle g_1, \ldots, g_{j}\rangle_{top}$ 
for some $j\geq 1$. To show that $(g_1, \ldots, g_n)$ is redundant, it suffices to show
$\langle g_2, \ldots, g_n\rangle_{top}
=
\langle g_1, \ldots, g_n\rangle_{top}
$.
But $\grouptop{g_2, \dots , g_n}$ contains $\grouptop{g_2, \dots, g_j}$, hence also $g_1$; so it contains  
$ \left\{ g_1, \dots , g_n \right\} $, hence also $\grouptop{ g_{1}, \dots , g_n  }$.

\begin{rem}
This proof applies to   algebraic groups with their Zariski topology, even though the product topology on $G^n$ does
not coincide with the Zariski topology.
\end{rem}

\subsection{Proof of Theorem~\ref{thm: rdu}}\label{par:proof:redundancy}

Let $m \geq 1$ be fixed.

\subsubsection{Chains of connected subgroups} 
For a compact Lie group $G$, define $\ell(G)$
to be the longest length of an increasing sequence of closed and connected subgroups $G_0=\{1_G\}\subset G_1\subset \cdots \subset G_\ell=G$. Thus,  $\ell(G)=0$ if and only if $G$ is finite. Note that $\ell(H)\leq \ell(G)$ if $H\subset G$ and $\ell(G)=\ell(G^\circ)$. 

In the unitary group $\UU_m$, consider the sequence $G_1=\UU_1$, 
$G_2=\UU_1\times \UU_1$, $\ldots$, $G_m=\UU_1^m$ where the copies of $\UU_1$ are along the diagonal. Then,  define $G_{m+j}$=$\UU_j\times \UU_1^{m-j}$, up to $G_{2m-1}=\UU_m$. This construction shows that $\ell(\UU_m)\geq 2m-1$. In fact, we have equality, as the following theorem shows.

\begin{thm}
\label{thm:longest_chain_in_Um}
If $G$ is a closed subgroup of $\UU_m$, then $\ell(G)\leq \ell(\UU_m)=2m-1$.
\end{thm}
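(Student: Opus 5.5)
We want to show that every strictly increasing chain of closed connected subgroups $\{1\}=G_0\subsetneq G_1\subsetneq\cdots\subsetneq G_\ell$ inside $\UU_m$ has length $\ell\leq 2m-1$. Since $\ell(H)\leq\ell(\UU_m)$ for $H\subset\UU_m$, it is enough to treat $G=\UU_m$ and work with connected subgroups only. The natural potential is $\phi(H)=\dim H+\mathrm{rk}(H)$, where $\mathrm{rk}$ is the dimension of a maximal torus. We will not use $\phi$ itself directly. Instead we use a second quantity adapted to the representation: for a connected closed subgroup $H\subset\UU_m$, let $\psi(H)=\mathrm{rk}(H)+s(H)$, where $s(H)$ is the number of non-abelian irreducible pieces in the structure of $H$ acting on $\C^m$. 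The guiding picture is the model chain, which increases the rank $m$ times (through tori) and then performs $m-1$ ``merges'' $\UU_j\times\UU_1\to\UU_{j+1}$.

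Here is the cleaner plan we will actually follow, by induction on $m$. The statement is trivial for $m=1$, since $\ell(\UU_1)=1$. In the inductive step we take a chain $G_0\subsetneq\cdots\subsetneq G_\ell=G$ of closed connected subgroups of $\UU_m$. Let $\C^m=V_1\oplus\cdots\oplus V_k$ be the decomposition of $\C^m$ into $G$-irreducible summands, so that $G\subset \prod_i \UU(V_i)$ and every $G_j$ preserves this decomposition. We then split into two cases.

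\emph{Reducible case} ($k\geq 2$). Project the chain to the factors $\UU(V_1)$ and $\UU(V_1^\perp)$, of dimensions $m_1$ and $m_2=m-m_1$. We want to prove the sharper statement $\ell(H)\leq \mathrm{rk}(H)+\mathrm{rk}([H,H])$ for connected compact $H$, and then bound ranks. For $H\subset\UU(V_1)\times\UU(V_2)$ we have $\mathrm{rk}(H)\leq m_1+m_2$ and $\mathrm{rk}[H,H]\leq (m_1-1)+(m_2-1)$. This would give $\ell\leq 2m-2$ in the reducible case.

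\emph{Irreducible case} ($k=1$). Since $G$ is connected and acts irreducibly, Schur's lemma shows that its centre lies in the scalars, so $\mathrm{rk}Z(G)^\circ\leq 1$. Moreover $\mathrm{rk}[G,G]\leq m-1$, because $[G,G]\subset \SU_m$.

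The key lemma is therefore: for a compact connected Lie group $H$, every chain of closed connected subgroups has length $\leq \mathrm{rk}(H)+\mathrm{rk}([H,H])$. Writing $H=(K\times T)/F$ as in Section~\ref{par:generators_of_compact_lie_groups}, we have $\mathrm{rk}H=\mathrm{rk}K+\dim T$ and $\mathrm{rk}[H,H]=\mathrm{rk}K$. The proof of the lemma is by induction on $\dim H$. Take a maximal proper connected subgroup $H'\subsetneq H$; we must show $\lambda(H')\leq\lambda(H)-1$, where $\lambda=\mathrm{rk}+\mathrm{rk}[\cdot,\cdot]$. If $\mathrm{rk}H'<\mathrm{rk}H$ this is clear, since $\mathrm{rk}[H',H']\leq \mathrm{rk}[H,H]$ (a semisimple subgroup of $H$ lies in $[H,H]$, via the projection to the torus part). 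If $\mathrm{rk}H'=\mathrm{rk}H$, then $H'$ contains a maximal torus $S$ of $H$. We need $\mathrm{rk}[H',H']<\mathrm{rk}[H,H]$, that is, $\dim Z(H')>\dim Z(H)$. This follows when $H'$ is the centralizer of a subtorus, i.e.\ a Levi-type subgroup. The main obstacle is equal-rank maximal subgroups with the same semisimple rank, such as $\SO_4\subset \mathrm{Sp}_2$, $\mathrm{SU}_3\subset G_2$, or $\mathrm{Sp}_1^4\subset \mathrm{Spin}_8$ (Borel--de Siebenthal). In these examples $\lambda$ does not drop. So $\lambda$ must be refined: we replace $\mathrm{rk}[H,H]$ by the number of simple roots needed, i.e.\ we count $\lambda(H)=\mathrm{rk}H+\sum_i \mathrm{rk}K_i$ weighted appropriately. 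Alternatively we avoid the issue entirely by using the representation on $\C^m$: for $H\subset \UU_m$ we set $\lambda_m(H)=\dim$ of the centre of the commutant algebra of $H$ plus $\mathrm{rk}H$, and we prove monotone strict decrease using the double commutant theorem. We expect this refinement to be the real work, and we would first try this commutant-based potential, since it is exactly $2m-1$ on $\UU_m$ in the model chain.
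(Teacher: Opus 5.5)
\textbf{There is a genuine gap: the key lemma is false, and the proposed repair does not work as defined.}

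Your reduction to $\UU_m$ by monotonicity of $\ell$ is fine. It is also the paper's first step; the paper then simply quotes $\ell(\UU_m)=2m-1$ from Theorems 1 and 2 of Burness--Liebeck--Shalev, who compute the length of every compact connected Lie group using the classification of maximal connected subgroups.

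Your self-contained route rests on the inequality $\ell(H)\le \mathrm{rk}(H)+\mathrm{rk}([H,H])$, and this is false, not merely hard to prove. Two counterexamples:
\begin{itemize}
\item For $H=G_2$, the chain $\{1\}\subsetneq T^1\subsetneq T^2\subsetneq \UU_2\subsetneq \mathrm{SU}_3\subsetneq G_2$ has length $5$, while the right-hand side is $4$.
\item For $H=\mathrm{Sp}_2\subset \UU_4$, the chain $\{1\}\subsetneq T^1\subsetneq T^2\subsetneq \mathrm{SU}_2\times \UU_1\subsetneq \mathrm{SU}_2\times\mathrm{SU}_2\subsetneq \mathrm{Sp}_2$ again has length $5>4$.
\end{itemize}
The equal-rank inclusions you list as ``obstacles'' are exactly counterexamples to the lemma. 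Both your reducible and irreducible cases invoke it, so neither is established.

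The reducible case could be saved by the induction on $m$ that you announce but never use, together with $\ell(A\times B)\le \ell(A)+\ell(B)$. The irreducible case is the real content. A chain can pass through several irreducible subgroups, e.g.\ $\SO_3\subsetneq \mathrm{SU}_3\subsetneq \UU_3$ on $\C^3$. So decomposing $\C^m$ under the top group does not reduce the problem.

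The proposed repair is not a proof either. ``Weighted appropriately'' is never defined. The commutant potential as you define it is not monotone: $\dim Z(\mathrm{End}_H(\C^m))$ counts isotypic components. Along $\{1\}\subset \UU_1^m\subset \UU_m$ your $\lambda_m$ takes the values $1$, $2m$, $m+1$; in particular it equals $m+1$ on $\UU_m$, not $2m-1$.

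Your instinct to combine rank with the action on $\C^m$ can be made to work, but with a different invariant.
\begin{itemize}
\item Let $s(H)$ be the number of irreducible summands of $\C^m$ under $H$, counted with multiplicity. This is the rank of the centralizer of $H$ in $\UU_m$.
\item Set $\Psi(H)=\mathrm{rk}(H)-s(H)$. Then $\Psi(\{1\})=-m$ and $\Psi\le m-1$ on all of $\UU_m$.
\item $s$ cannot decrease when passing to a subgroup, so $\Psi$ drops strictly whenever the rank drops.
\end{itemize}
The missing ingredient is that a proper connected subgroup $H'\subsetneq H$ of the same rank splits some $H$-irreducible summand. This holds:
\begin{itemize}
\item Take a root $\beta$ of $H$ (relative to a common maximal torus) that is not a root of $H'$.
\item Take a summand $V$ on which $\mathfrak{g}_\beta$ acts non-trivially.
\item Choose a Weyl chamber for which the highest weight $\lambda$ of $V$ satisfies $\langle\lambda,\beta^\vee\rangle>0$. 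This is possible because $\beta$ is not orthogonal to all weights of $V$. Then $\lambda-\beta$ is a weight of $V$.
\item If $V$ were $H'$-irreducible, its weights would lie in $\lambda-\Z_{\ge 0}\Phi(H')^+$, so $\beta\in\Z_{\ge 0}\Phi(H')^+$.
\item An easy induction, using that $\Phi(H')$ is closed under root addition, shows that a root in $\Z_{\ge 0}\Phi(H')^+$ lies in $\Phi(H')$. This contradicts the choice of $\beta$.
\end{itemize}
Hence $\Psi$ increases strictly along any chain of connected closed subgroups, which gives $\ell\le 2m-1$. Some such argument, or the paper's citation, is needed; as written, your proposal has no valid bound in the irreducible case.
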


This result follows from  Theorems 1 and 2 in~\cite{burness-liebeck-shalev} (see also~\cite{onishchik-vinberg;book} for the study of maximal subalgebras of semisimple Lie algebras). It replaces the obvious quadratic upper bound $\ell(\UU_m)\leq \dim_\R(\UU_m)=m^2$ by a linear bound. 

\begin{rem}
Let $G$ be a linear algebraic group, defined over some algebraically closed field $\bfk$. Denote by $\ell_a(G)$ the maximal length of an increasing chain of connected algebraic subgroups $G_i\subset G$. Let $R_u(G)$ be the unipotent radical. The quotient ${\overline{G}}:=G/R_u(G)$ is reductive, and we denote by ${\mathsf{rk}}({\overline{G}})$ the rank of this group and by $B({\overline{G}})$ its Borel subgroup. Then, Theorem~1 and~3 of~\cite{burness-liebeck-shalev-alg} show that
\[
\ell_a(G)= \dim(R_u(G)) + {\mathsf{rk}}({\overline{G}}) + \dim(B({\overline{G}})) > \dim(G)/2
\]
This gives 
$\ell_a(\GL_m(\bfk)=\frac{1}{2}m(m+3)$, which is quadratic in $m$. 
Thus, $\ell_a(G)\leq \frac{1}{2}m(m+3)$ for any algebraic subgroup of $\GL_m(K)$ over a field $K$ of characteristic $0$.
\end{rem}

\subsubsection{Definition of $N(m)$} 
Define recursively the sequence $N(m,D)$  by
\begin{align}
N(m,0)&=N_\#(m) \label{eq:definition_N(m,D)_II}\\
N(m, D+1) &= N(m,0) +  \max_{0 \leq j \leq D}  N(m,j)\label{eq:definition_N(m,D)_II}
\end{align}
Note that $N(m,D)$ increases strictly with $D$ and   $m$. 
Thus, in Equation~\eqref{eq:definition_N(m,D)_II}, the max is equal to $N(m,D)$, which yields $N(m,D)=(D+1)N(m,0)$. In particular,  if we define $N(m):=N(m, 2m-1)$ we obtain
\begin{equation}\label{eq:definition_N(m)}
N(m) =  2m N_\#(m).
\end{equation}

\subsubsection{Description of the  recursion} 
Let $P(\ell)$ be the assertion: {\textit{for every $n \geq N(m,\ell)$ and for every compact subgroup $G\subset \GL_m(\C)$ with $\ell(G)\leq \ell$, any homomorphism $\rho : \F_n \to G$ is redundant. }}

We shall prove $P(\ell)$ by induction on $\ell$. 
Then, thanks to Theorem~\ref{thm:longest_chain_in_Um},  Theorem \ref{thm: rdu} follows with $N(m)$ as in Equation~\eqref{eq:definition_N(m)}.

\begin{rem}
To prove Theorem~\ref{thm: rdu_alg}, we set $Z(m)=N(m,m(m+3)/2)=\frac{1}{2}m(m+5)N_\#(m)$.
\end{rem}

\subsubsection{Proof of the  recursion} To verify Property $P(0)$ we only have to refer to~Corollary~\ref{cor:jordan_and_compact}.

Now, assume that $P(k)$ is satisfied for $0\leq k \leq \ell$. To prove $P(\ell+1)$, we fix some $n\geq N(m,\ell+1)$. We take $G$ with $\ell(G)=\ell+1$ and a homomorphism $\rho\colon \F_n\to G$, and  we set $g_i=\rho(a_i)$, $1\leq i\leq n$. Replacing $G$ by the closure of the image of $\rho$, we can assume that  $\rho \in \Epitop(\F_n;G)$.
 
Let $\pi : G \to G^{\#}$ denote the quotient morphism.  
Since $n \geq N(m,0)$,  Corollary~\ref{cor:jordan_and_compact} tells us that every element of $\Rep(\F_n;G^{\#})$ is redundant.
Thus, there is a Nielsen move (mapping the $g_i$ to some $g_i'$) and some integer 
$k\leq N(m,0)-1$  such that $(\pi(g_1'), \ldots, \pi(g_k'))$ generates $G^{\#}$; from this, we can also impose that 
$g_{k+1}'$, $\ldots$, $g_n'$ are all in $G^\circ$,  the neutral component of $G$. Now, we set 
\[
H=\langle g_{k+1}', \ldots, g_n' \rangle_{top}
\]
and distinguish two cases. 

The first case is when $\ell(H) < \ell( G)$. 
By definition of $N(m,\ell+1)$, we have  $n-k \geq  1+ \max_{0 \leq j \leq \ell} N(m,j)$, which implies  by induction that $(g_{k+1}', \ldots, g_n')$ 
 is redundant. By~Lemma~\ref{lem:redundancy_from_partial_redundancy}, $\rho$ is also redundant.

In the second case   $\ell (H) = \ell (G)$, or equivalently $H = G^\circ$.
Let $K\subset H$ be the closed subgroup defined by $K= \langle  g'_{k+1} , \ldots ,  g'_{n-1}  \rangle_{top}$. 
If  $\ell(K) = \ell(G)$, then $K = H$ and $(g'_{k+1}, \ldots, g'_n)$ is immediately redundant. 
Otherwise $\ell (K) < \ell (G)$. Since 
$n \geq N(m,0) + N(m , \ell(K) )$, we obtain $n-k-1 \geq N(m , \ell(K) )$ and the induction hypothesis shows that $(g'_{k+1}, \ldots, g'_{n-1})$ 
is redundant. 
In both cases, we conclude with Lemma~\ref{lem:redundancy_from_partial_redundancy}. 

\begin{rem}
The proof of Theorem~\ref{thm: rdu_alg} is the same, except that the recursion goes up to $\ell=\ell_a(\GL_m(\bfk))$ instead of $\ell(\UU_m)$. 
\end{rem}

\section{Dynamics}

Here, we apply the results of the previous section
to prove
Theorem \ref{thm:main}.

\subsection{Orbit closures}
\label{sec:orbit_closure}
In this section we classify the orbit closures
for the action of $ \Aut(\F_n) $
on
$ \Rep(\F_n; G) $,
when $ n $ is large.

\subsubsection{Two lemmas} Let $ k $ be an integer $< n $.
Recall that
  a homomorphism $\rho\colon \F_n\to G$
is  $k$-redundant
if there is a decomposition
$\F_n=A\star B$
such that (i) $ B $ is a free group of rank at least $ k $
and (ii)
 $\rho(A)$ and $\rho(\F_n)$ have the same closure.
Recall that we denote by  $\Redtop^{k} (\F_n ; G)$  the set of these $k$-redundant homomorphisms.
The following lemma is the topological version
of Theorem~\ref{thm:lubotzky} (1).
\begin{lem}
	[Minimality on $ d $-redundant representations]
	\label{lem:red_implies_dense}
	Let $d$ and $n$ be two integers such that $0\leq d\leq n$ and $n\geq 2$. 
	Let $G$ be a topological group which is topologically generated by $ d $
	elements.
	If
	$\rho \in \Redtop^{d} (\F_n; G) \cap \Epitop(\F_n; G)$, then
		 the $ \Aut(F_n) $-orbit of
	$ \rho $
	is
	dense in
	$\Redtop^{d} (\F_n; G) \cap \Epitop(\F_n; G)$.
\end{lem}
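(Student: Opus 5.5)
We mimic the proof of Theorem~\ref{thm:lubotzky}\,(1), replacing exact equalities by approximations and using Remark~\ref{Delta} in its topological form. Fix $\rho\in \Redtop^{d}(\F_n;G)\cap\Epitop(\F_n;G)$ and a target $\rho'$ in the same set, together with a neighbourhood $\mathcal U$ of $\rho'$ in $G^n$. Fix also topological generators $(s_1,\ldots,s_d)$ of $G$. The plan is to show that both $\rho$ and $\rho'$ can be moved, up to arbitrarily small error, to tuples close to the normal form $(s_1,\ldots,s_d,1,\ldots,1)$. Since the action of each element of $\Aut(\F_n)$ on $G^n$ is a homeomorphism and the action is by a group, this gives density: we bring $\rho$ near the normal form, and then apply the inverse of a Nielsen move that brings $\rho'$ near the normal form.

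The first step handles redundancy. By $d$-redundancy, after a Nielsen move, $\rho=(h_1,\ldots,h_{n-k},h_{n-k+1},\ldots,h_n)$ with $k\geq d$ and $\langle h_1,\ldots,h_{n-k}\rangle_{top}=G$. Each $h_j$ with $j>n-k$ is then a limit of words $w$ in $h_1,\ldots,h_{n-k}$. By Remark~\ref{Delta}, the move $h_j\mapsto h_j w^{-1}$ is a Nielsen move fixing the other entries, so $h_j$ can be made arbitrarily close to $1$. Do this for all $j>n-k$.

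The second step inserts the generators. Similarly, the last $d$ slots (which are near $1$) can be made arbitrarily close to $s_1,\ldots,s_d$: replace $h_j$ by $h_j w$ with $w$ a word in $h_1,\ldots,h_{n-k}$ close to $h_j^{-1}s_i$. Now $(s_1,\ldots,s_d)$ approximately topologically generates, so each of $h_1,\ldots,h_{n-k}$ can be pushed close to $1$ by right multiplication by words in the last $d$ entries.

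The main obstacle is error control. The words used in later steps are in entries that are only approximately $s_i$, and the words can be long, so small errors can grow. To handle this, we perform the steps in order and choose each approximation after the words of the subsequent steps are fixed. Concretely, we first choose words $u$ in $s_1,\ldots,s_d$ with $h_i u\approx 1$; these are finitely many fixed words, and they are continuous in their inputs. Then we choose the precision in the second step small enough that replacing $s_i$ by nearby elements keeps $h_iu$ within the desired error. Continuity of finitely many words, evaluated in a compact (or just topological) group, makes this work.

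If $k>d$, the remaining slots stay near $1$, and moving an entry near $1$ exactly to $1$ is not needed: we only need to land in a neighbourhood. The same procedure applied to $\rho'$ yields $\psi$ with $\psi_*\rho'$ close to $(s_1,\ldots,s_d,1,\ldots,1)$, after a permutation. By continuity of $\psi^{-1}$, taking the approximation for $\rho$ fine enough puts $\psi^{-1}\varphi_*\rho$ in $\mathcal U$. The hypothesis $n\geq2$ only ensures that permutations and Nielsen moves make sense, and the degenerate case $d=0$ (trivial $G$) is immediate.
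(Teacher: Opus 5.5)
Your forward half is correct. With $\nu=(s_1,\ldots,s_d,1,\ldots,1)$, your three steps show that $\nu\in\overline{\Aut(\F_n)\rho}$, because you fix the words $u_i$ before choosing the precision on the $s_i'$. The same steps show $\nu\in\overline{\Aut(\F_n)\rho'}$.

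The gap is in the final assembly. Knowing that both orbit closures contain $\nu$ does not give $\rho'\in\overline{\Aut(\F_n)\rho}$: for a general action, two points can accumulate on a common point without accumulating on each other. What you need is the reverse relation $\rho'\in\overline{\Aut(\F_n)\nu}$, and your continuity argument does not supply it. Continuity of $\psi^{-1}_*$ at $\nu$ only makes $\psi^{-1}_*\varphi_*\rho$ close to $\psi^{-1}_*\nu$, and nothing guarantees $\psi^{-1}_*\nu\in\mathcal U$. Equivalently, you need $\varphi_*\rho$ to enter the open set $\psi_*\mathcal U$, which contains $\psi_*\rho'$ but need not contain $\nu$. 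Closeness of $\psi_*\rho'$ to $\nu$ would give closeness of $\psi^{-1}_*\nu$ to $\rho'$ only if you controlled the modulus of continuity of $\psi^{-1}_*$. You do not have that control, because $\psi$ is built from words chosen after the precision is fixed. With your order of choices, undoing the last step accurately enough would require the earlier words to be known first, which is circular.

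The fix, which is what the paper does, is to run the moves from $\nu$ to $\rho'$:
\begin{enumerate}[\rm (i)]
\item After a Nielsen move, write $\rho'=(h'_1,\ldots,h'_n)$ with $\langle h'_1,\ldots,h'_{n-d}\rangle_{top}=G$; this uses that $\rho'$ is a $d$-redundant topological epimorphism.
\item Fill the trivial slots of $\nu$ with words in the $s_i$, so $\nu$ approaches $(s_1,\ldots,s_d,h'_1,\ldots,h'_{n-d})$.
\item Cancel the $s_i$ with words in the $h'_j$ and permute, reaching $(h'_1,\ldots,h'_{n-d},1,\ldots,1)$.
\item Rebuild the last $d$ entries with words in the $h'_j$, which approaches $(h'_1,\ldots,h'_n)$.
\end{enumerate}

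Chaining these steps needs no explicit error bookkeeping. Each element of $\Aut(\F_n)$ acts by a homeomorphism, so $\overline{\Aut(\F_n)x}$ is closed and invariant. Hence $y\in\overline{\Aut(\F_n)x}$ implies $\overline{\Aut(\F_n)y}\subset\overline{\Aut(\F_n)x}$. This transitivity is what your $\varepsilon$-management was trying to reproduce, and it treats both directions the same way.
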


Note that $G$ needs not be a Lie group for this lemma.
For application to the Zariski topology,
we do not specify the topology on
$ \Rep(\F_n; G) = G^{n} $;
we only require that the projections are continuous.
To prove this lemma, we shall use repetitively the following two facts. 

\noindent{ (a)} For a continuous group action, denote by $x\supset y$ the fact that the orbit of $y$ is contained in the orbit closure of $x$. Then, $\supset$ is transitive: if $x\supset y$ and $y\supset z$, then $x\supset z$.

\noindent{ (b)} If
	$ (g_1, \dots, g_n) \in G^{n} $
	and $ h \in \langle g_1, \dots, g_{n-1}\rangle$
	then
	there are Nielsen moves going from
	$ (g_1, \dots, g_n)$
	to
	$ (g_1, \dots, g_{n-1},  g_n h )$.
		If, instead, 
	$ h \in \langle g_1, \dots, g_{n-1}\rangle_{top}$
	then
	$(g_1, \dots, g_{n-1},  g_n h)$
	is in the orbit closure of
	$ (g_1, \dots, g_n)$.
\begin{proof}
	Set $ m = n - d $. Fix a tuple $ (g_1, \dots, g_d) $ that generates $ G $ topologically.
	
	Set	$(h_1 \ldots , h_n ) =	(\rho(a_1) , \ldots , \rho(a_n))$.
	As $ \rho $ is $ d $-redundant
	and an epimorphism,
	up to applying Nielsen moves,
	we can assume that
	$ \langle h_1 \ldots , h_m \rangle_{top} = G$.
	
	We first show that
	$ (g_1, \dots, g_d, 1, \dots, 1) $
	is in the orbit closure of
	$ \rho $.
	For $ k > m $,
	we have $ h_k \in
	\langle h_1 \ldots , h_m \rangle_{top}$,
	hence
	$(h_1, \dots, h_m, 1, \dots, 1)$ is in the orbit closure of
	$ (h_1, \dots, h_n) $.
	By the same argument,
	\[
		(h_1, \dots, h_m, 1, \dots, 1)
		\supset
		(h_1, \dots, h_m, g_1, \dots, g_d)
		.
	\]
		Now using that
	$ (g_1, \dots, g_d) $ generates $ G $ 
	topologically, we see that
	\[
		(h_1, \dots, h_m, g_1, \dots, g_d)
		\supset
		(g_1, \dots, g_d, 1, \dots, 1)
		.
	\]

	Consider now another representation
	$ \rho' \in
	\Redtop^{d} (\F_n; G) \cap \Epitop(\F_n; G)$
	and set
	$(h'_1 \ldots , h'_n ) =
	(\rho'(a_1) , \ldots , \rho'(a_n))$.
	As before we can assume that
	$ (h'_1, \dots, h'_m) $ generates $ G $ topologically.
	The same reasoning as above, in reverse,
	gives
	\begin{align*}
		(g_1, \dots, g_d, 1, \dots, 1)
		&\supset
		(g_1, \dots, g_d, h'_1, \dots, h'_m)\\
		&\supset
		(h'_1, \dots, h'_m, 1, \dots, 1)\\
		&\supset
		(h'_1, \dots, h'_n)
		.
	\end{align*}
	(Note that we used that $\rho'$ is an epimorphism on the second line.)
	We conclude that the orbit of
	$ \rho $ is dense in
	$\Redtop^{d} (\F_n; G) \cap \Epitop(\F_n; G)$.
\end{proof}

\begin{lem}
	\label{lem:dense_in_epifin}
	Let $ G $ be a compact subgroup of
	$ \GL_m (\C) $.
	Let $ n $
	be an integer such that
	$ n \geq N_\#(m)$.
	Then the closure of
	$\Epitop(\F_n; G)$
	is
	$ \Epifin(\F_n; G) $.
\end{lem}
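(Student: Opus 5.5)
Two inclusions. First, $\Epitop(\F_n;G)\subset\Epifin(\F_n;G)$: if $\rho(\F_n)$ is dense in $G$, its image in the finite discrete group $G^{\#}$ is dense, hence everything, so $\rho^\#$ is onto. Second, $\Epifin(\F_n;G)$ is closed, since $\rho\mapsto\rho^\#$ is continuous into the finite set $\Rep(\F_n;G^{\#})$, i.e.\ locally constant. So the closure of $\Epitop$ lies in $\Epifin$, and the real work is density: every $\rho\in\Epifin(\F_n;G)$ should be a limit of topological epimorphisms.

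Fix $\rho=(g_1,\ldots,g_n)\in\Epifin(\F_n;G)$ and a neighbourhood $U$ of it. I would reduce to a convenient position with Nielsen moves, which act on $G^n$ by homeomorphisms and preserve both $\Epitop$ and $\Epifin$. By Corollary~\ref{cor:jordan_and_compact} (we have $n\geq N_\#(m)$), the $\Aut(\F_n)$-orbits in $\Rep(\F_n;G^{\#})$ are the sets $\Epi(\F_n;H)$. Since $\rho^\#$ is onto, it lies in the same orbit as $(\pi(f_1),\ldots,\pi(f_d),1,\ldots,1)$, where $f_1,\ldots,f_d\in G$ are lifts of a generating set of $G^{\#}$ of size $d=\rg(G^{\#})$. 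Remark~\ref{rem:N_diese} gives $n\geq d+2$. Applying that move to $\rho$, we may assume $g_i\in f_iG^\circ$ for $i\leq d$ and $g_{d+1},g_{d+2},\ldots,g_n\in G^\circ$. It then suffices to approximate this new tuple by topological epimorphisms.

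Now perturb only $g_{d+1},g_{d+2}$ inside $G^\circ$, keeping the other entries fixed. By Theorem~\ref{thm:top_generators}\,(2), pairs topologically generating $G^\circ$ have full Haar measure in $G^\circ\times G^\circ$, hence are dense there. So we can pick $(g'_{d+1},g'_{d+2})$ arbitrarily close to $(g_{d+1},g_{d+2})$ with $\langle g'_{d+1},g'_{d+2}\rangle_{top}=G^\circ$. The new tuple lies in $U$. The closed group $L$ it generates contains $G^\circ$, and it contains the $g_i$ for $i\leq d$, whose images generate $G^{\#}$. Thus $L$ meets every coset of $G^\circ$, so $L=G$ and the perturbed tuple is in $\Epitop(\F_n;G)$.

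The main obstacle is the reduction step: the perturbation needs two coordinates already lying in $G^\circ$, and this is exactly where transitivity on $\Epi(\F_n;G^{\#})$ and the bound $n\geq \rg(G^{\#})+2$ are used. One also has to check that applying a Nielsen move before approximating is harmless, which holds because $\Epitop$ is $\Aut(\F_n)$-invariant and automorphisms act by homeomorphisms. If $G^\circ$ is trivial, the statement is immediate, since then $\Epitop=\Epifin$.
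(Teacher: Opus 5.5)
Your proof is correct and follows the paper's argument. You use Nielsen moves, via Corollary~\ref{cor:jordan_and_compact} and $n\geq \rg(G^{\#})+2$, to put two coordinates in $G^\circ$ while the others project onto generators of $G^{\#}$, and then you approximate that pair by a pair topologically generating $G^\circ$. The only cosmetic difference is that you justify the connected step with Theorem~\ref{thm:top_generators}\,(2) (a full-measure set is dense), where the paper cites Gelander's Lemma~1.10.
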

\begin{proof}
	It suffices to show that
	$\Epitop(\F_n; G)$
	is dense in 
	$ \Epifin(\F_n; G) $,
	because
	$\Epitop(\F_n; G)
	\subset
	\Epifin(\F_n; G) $
	and
	$ \Epifin(\F_n; G) $
	is closed.

	When $ G $ is connected,
	the assertion means
	that
	$\Epitop(\F_n; G)$
	is a dense subset of 
	$\Rep(\F_n; G) $,
	which follows from
	Lemma~1.10
	of~\cite{Gelander:Aut(Fn)2008}.
	In this case
	$ n \ge 2 $
	is sufficient; this will be used below.

	For a general $ G $,
	fix  $ \rho \in \Epifin(\F_n; G) $
	and set
	$ \left( g_{1}, \dots , g_{n}   \right)
	=
	\left( \rho(a_1), \dots, \rho(a_n) \right)$.
	Since 
	$\Redtop^{d} (\F_n; G)$, $\Epitop(\F_n; G)$,
	and
	$ \Epifin(\F_n; G) $
	are $ \Aut(\F_n) $-invariant,
	we can argue up to Nielsen moves.
	Since $n\geq N_\#(m)$, we know by Remark~\ref{rem:N_diese} that $n\geq d(G^\#)+2$.  
	Now, with Corollary~\ref{cor:jordan_and_compact}, we can assume, after some  Nielsen move,
         that 	$ (g_{1}, \dots, g_{n-2}) $
	generates $ G^{\#} $. Applying another Nielsen move, we can also assume that
	$ g_{n-1}$ and $g_{n} $
	are in~$ G^{o} $.
	
	Using the case of a connected group,
	we can find a sequence
	$  (g_{n-1}^{(k)} , g_{n}^{(k)})
	\in
	\Epitop(\F_2; G^{o})$
	which converges to
	$ (g_{n-1}, g_{n}) $.
	Then 
	$ (g_{1}, \dots, g_{n-2},
	g_{n-1}^{(k)} , g_{n}^{(k)})$
	is a sequence in
	$\Epitop(\F_n; G)$
	that converges to
	$ \left( g_{1}, \dots , g_{n}   \right)$.
\end{proof}

\subsubsection{Proof of Assertions~(1) and~(2) of Theorem~\ref{thm:main}} 
\label{12main}
Let $ G $ be a compact Lie group. 
Let $ m $ be the smallest integer $ \ge 1 $ such that $ G $ embeds in $ \GL_{m}(\C) $.
Set $ d = 2 + \left\lfloor 3m/2 \right\rfloor $.
Fix $ n \ge N(m) + d $, where $ N(m) $ is the constant given in Theorem \ref{thm: rdu} (see Equation~\eqref{eq:N_m}).

Fix $ \rho \in \Rep(\F_n; G) $
and denote by
$ H_{\rho} $
the closure of the image of $ \rho $.
By definition,
$ \rho \in \Epitop(\F_n; H_{\rho}) $.
By Theorem \ref{thm: rdu} and
Remark~\ref{rem:redundant_vs_k-redundant},
\[ \Epitop(\F_n; H_{\rho}) =
\Epitop(\F_n; H_{\rho})
\cap
\Redtop^{d}(\F_n; H_{\rho}).
\]
In particular,
the representation
$ \rho $
is $ d $-redundant.

By Theorem \ref{thm:top_generators}\,(1),
the group $ H_{\rho} $ can be topologically generated
by $ d $ elements.
Thus, applying
Lemma \ref{lem:red_implies_dense} to $ \rho $,  the
$ \Aut(\F_n) $-orbit of $ \rho $ is dense in
$ \Epitop(\F_n; H_{\rho}) $.
Since, 
the closure of 
$ \Epitop(\F_n; H_{\rho}) $
is
$ \Epifin(\F_n; H_{\rho}) $ (by 
Lemma \ref{lem:dense_in_epifin}),
we conclude that the
orbit closure of $ \rho $
is
$ \Epifin(F_n; H_{\rho}) $.

\subsection{Invariant probability measures}
Here, we prove Assertion~(3) of
Theorem~\ref{thm:main}.


\subsubsection{Disintegration of measures}\label{par:disintegration} Let us 
recall the concept of disintegration of a probability measure with respect to a fibration 
(see~\cite{Einsiedler-Ward}, Theorem 5.14).
	
Let $ (X,\mu)$ and $(Y,\bar{\mu}) $ be Borel probability spaces,
with a measurable map 	$ \pi : X \to Y $ such that
	$ \bar{\mu} = \pi_{*} \mu $.
	Then there exists a measurable
	family of probability measures
	$ \mu_{y} $ for $ y \in Y $
	such that
	$$
	\mu
	=
	\int_{Y}
	\mu_{y}\, 
	d \bar{\mu}(y)
	,
	$$ 
	and
	$ \bar{\mu} $-almost every
	$ \mu_y $ is supported on
	$ \pi^{-1}(y) $.
	Moreover,
	if
	$ T : X \to X $
	is a measurable transformation
	such that
	$ \mu $ and $ \pi $ are invariant
	under the action of~$ T $,
	then
	for
	$ \bar{\mu} $-almost every $ y $,
	the measure
	$ \mu_y $ is $ T $-invariant.

\subsubsection{Closed subgroups}\label{par:closed subgroups} 

\begin{thm}\label{par:countably_many_subgroups}
Let $G$ be a compact Lie  group. Then $G$ contains at most countably many conjugacy classes of closed subgroups.
\end{thm}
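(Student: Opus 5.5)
The plan is to split a closed subgroup $H\subset G$ into two pieces of data: its neutral component $H^\circ$, which is a closed connected subgroup, and the finite group $H^{\#}=H/H^\circ$. We first show that there are only countably many conjugacy classes of closed connected subgroups. Then we show that, for a fixed connected $H^\circ$, there are only countably many conjugacy classes of closed $H$ with that neutral component.

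For the connected part, a closed connected subgroup $L\subset G$ is determined by its Lie algebra $\mathfrak{l}\subset\mathfrak{g}$. So it is enough to show that the Lie subalgebras of $\mathfrak{g}$ that are Lie algebras of compact subgroups fall into countably many $\mathrm{Ad}(G)$-orbits. Such an $\mathfrak{l}$ is reductive, $\mathfrak{l}=\mathfrak{z}\oplus[\mathfrak{l},\mathfrak{l}]$, with $[\mathfrak{l},\mathfrak{l}]$ compact semisimple. The route is the following.
\begin{itemize}
\item Up to isomorphism there are countably many compact semisimple Lie algebras $\mathfrak{s}$ of dimension at most $\dim\mathfrak{g}$, and countably many compact connected groups $\tilde S\times T$ (simply connected times torus) of bounded dimension.
\item Homomorphisms $\tilde S\times T\to G$ with closed image correspond to a continuous homomorphism on $\tilde S$ together with a homomorphism $T\to G$.
\item For $T$: after conjugation into a fixed maximal torus $T_G$, a homomorphism $T\to T_G$ is an integer matrix, so there are countably many choices.
\item For $\tilde S$: representations of a compact semisimple group into $G\subset \UU_m$ are rigid. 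By Peter--Weyl and the discreteness of characters, $\Rep(\tilde S;\UU_m)/\UU_m$ is countable, and each $\UU_m$-orbit meets $\Rep(\tilde S;G)$ in finitely many $G$-orbits.
\end{itemize}
The finiteness in the last item comes from the Whitehead lemma: $H^1(\tilde S,\mathfrak{g})=0$ makes $G$-orbits open in the compact space $\Rep(\tilde S;G)$. The commuting pairs are handled together by the same vanishing-cohomology argument, applied to homomorphisms of the compact group $\tilde S\times T$, or directly via Richardson's rigidity. This gives countably many conjugacy classes of $L$.

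For the finite part, fix a connected $L$ and consider closed $H$ with $H^\circ=L$. Then $H\subset N_G(L)$, and $H/L$ is a finite subgroup of the compact Lie group $N_G(L)/L$. By Theorem~1.1 of~\cite{Brion:Pacific2015} there is a finite $F\subset H$ with $H=FL$, and by Jordan's theorem together with Theorem~\ref{thm:kovacs-robinson} the group $F$ can be taken generated by at most $\lfloor 3m/2\rfloor$ elements.
\begin{itemize}
\item $H$ is determined by $L$ and a homomorphism $\F_r\to N_G(L)$ with finite image modulo $L$, with $r\le\lfloor 3m/2\rfloor$.
\item There are countably many isomorphism types of finite groups $\Phi$, and each has finitely many generating tuples.
\item So it suffices to show that for a finite group $\Phi$, the homomorphisms $\Phi\to G$ form finitely many $G$-conjugacy classes. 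This again follows from $H^1(\Phi,\mathfrak{g})=0$, which holds for any finite group in characteristic $0$, so the orbits are open in the compact set $\Rep(\Phi;G)$.
\end{itemize}
Hence for each $\Phi$ only finitely many conjugacy classes of finite subgroups $F$ occur, and pairing them with the countably many classes of $L$ gives countability of the pairs $(F,L)$, hence of $H=FL$ up to conjugacy.

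The step I expect to be the main obstacle is making the bookkeeping in the second paragraph clean, in particular the torus factor, where rigidity fails and one instead uses that subtori of $T_G$ correspond to rational subspaces. A shortcut I would try first is to argue uniformly: every closed $H$ equals $\langle F\rangle\cdot\langle\text{torus}\rangle\cdot\langle\text{semisimple}\rangle$. One can also just cite the classical result of Montgomery--Zippin that nearby closed subgroups of a compact Lie group are conjugate into each other, which yields openness-type rigidity and countability after a Baire/second-countability argument on the space of closed subgroups.
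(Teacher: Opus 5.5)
Your two-step plan is sound in outline, but the step that closes the finite part does not follow as written. Counting $F$ up to $G$-conjugacy (finitely many classes per $\Phi$) and $L$ up to $G$-conjugacy (countably many) says nothing about the pair $(F,L)$ up to \emph{simultaneous} conjugacy. That relative position is what determines $H=FL$ up to conjugacy.

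A concrete example: in $G=\SO_3(\R)$, take $F$ generated by a half-turn and $L$ a circle subgroup. The classes of pairs are parametrized by the angle between the two axes, which is a continuum. Only the constraint $F\subset N_G(L)$ cuts this down, to the angles $0$ and $\pi/2$. Your argument discards that constraint as soon as it counts $F$ up to $G$-conjugacy.

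The repair uses tools you already have. Fix representatives $L_1,L_2,\ldots$ of the classes of closed connected subgroups. Run your finite-group rigidity (open orbits in the compact set $\Rep(\Phi;M)$) in the compact Lie group $M=N_G(L_i)/L_i$ instead of in $G$. Then $H/L_i$ falls into finitely many $M$-classes for each $\Phi$. A conjugating element of $M$ lifts to some $n\in N_G(L_i)$ with $nHn^{-1}=H'$, because $n$ preserves $L_i$. In this form you need neither Brion's finite subgroup nor the Kov\'acs--Robinson bound; the countability of finite groups $\Phi$ suffices.

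A similar slip occurs in the connected part. $\Rep(\tilde S\times T;G)$ is not compact when $T\neq 1$ (already $\Rep(S^1;S^1)\cong\Z$). So open orbits give countably many orbits only through second countability (Lindel\"of), not finitely many through compactness.

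Once you use that argument, note that $H^1(K,\mathfrak g)=0$ holds for \emph{every} compact Lie group $K$. So $G$-orbits are open in the second countable space $\Rep(K;G)$ for all such $K$. Applying this directly to $K\cong H$, over the countably many isomorphism types of compact Lie groups, makes the split into $H^\circ$ and $H^{\#}$ unnecessary. That is essentially the paper's proof: Montgomery--Zippin supplies openness of conjugacy, and algebraicity of $\Rep(F,G)$ supplies countably many components.

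Your Montgomery--Zippin shortcut on the space of closed subgroups also works, with one addition. That theorem only gives subconjugacy. You must add that a subconjugate closed subgroup with the same dimension and the same number of components is conjugate (Lemma 2.1 of Lee--Wu, as in the paper). Then stratify by these two invariants before invoking second countability.
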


\begin{proof}[Sketch of proof]    
The first main ingredient is a simple theorem of Montgomery and Zippin, which says that, for any compact subgroup $F$ of $G$, there is a neighborhood $U$ of $F$ in $G$ with the following property: if $H$ is a subgoup of $G$ contained in $U$, then $g^{-1}Hg\subset F$ for some $g\in G$ (see Section~5.3 p. 215 of~\cite{Montgomery-Zippin}). Moreover, if $H$ is isomorphic to $F$ (as a Lie group), then $g^{-1}Hg=F$, because 
$g^{-1}H^\circ g=F^\circ$ since the dimensions are the same; and then $g^{-1}Hg=F$ since $\card(H^{\#})=\card(F^{\#})$ (see~Lemma 2.1 in~\cite{Lee-Wu}).

We already described the second ingredient in Section~\ref{par:generators_of_compact_lie_groups}:  up to isomorphism, there are only countably many compact Lie groups, hence only countably many possibilities for the compact subgroups in $G$

Now fix such a compact Lie group $F$ and consider the space $\Rep(F,G)$ of homomorphisms from $F$ to $G$. Each homomorphism $F\to G$ is algebraic with respect to the natural structures of real algebraic groups of $F$ and $G$ described in Section~\ref{par:peter-weyl}, so $\Rep(F,G)$ is a countable union of algebraic varieties (homomorphisms given by formulas of degree $\leq d$ form an affine variety). As such, it has at most countably many connected components. The property that two homomorphisms 
are conjugate is a closed property, and by the theorem of Montgomery and Zippin it is also open. This concludes the proof. 
\end{proof}

\subsubsection{Supports of invariant probability measures} 

\begin{lem} \label{lem:support}
	Let $ G $ be a compact Lie group
		and
	$ \mu $ be an
	$ \Aut(\F_n) $-invariant and ergodic probability
	measure on $ \Rep(\F_n; G) $.
	Then there exists a closed subgroup
	$ H $ of $ G $ 
	such that $ \mu $
	is supported on
	$ \Epitop(\F_n; H) $.
\end{lem}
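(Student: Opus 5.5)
The plan is to look at the map $\Phi\colon \Rep(\F_n;G)\to \{\text{closed subgroups of } G\}$, $\rho\mapsto H_\rho=\overline{\rho(\F_n)}$. Since $\rho\circ\varphi^{-1}$ has the same image as $\rho$, this map is $\Aut(\F_n)$-invariant. Ergodicity will then force $\Phi$ to be essentially constant, and I will take $H$ to be that constant value. Then $\mu$ is carried by $\{\rho \,;\, H_\rho=H\}=\Epitop(\F_n;H)$, and this set is Borel.

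Two measurability issues need care. Directly, I would equip the space of closed subgroups with the Chabauty (Hausdorff) topology, which is compact metrizable because $G$ is compact. A lower semicontinuity argument should then make $\rho\mapsto H_\rho$ Borel: $H_\rho$ is the closure of the increasing union of the finite sets $\{\rho(w)\,;\, |w|\leq k\}$, and each of these depends continuously on $\rho$. An invariant Borel map into a standard Borel space is $\mu$-a.e. constant by ergodicity. To see this, pull back a countable separating family of Borel sets: each preimage is invariant, so it has measure $0$ or $1$, and the intersection of the full-measure ones is a single point.

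As an alternative that exploits Theorem~\ref{par:countably_many_subgroups}, I would compose $\Phi$ with the map to conjugacy classes, a countable set. The invariant sets $\{\rho\,;\, H_\rho \text{ conjugate to } F\}$ are countably many, so one of them has full measure. Within that class, the conjugacy class of $F$ is $G/N_G(F)$, a standard space. So either route reduces to the same ergodicity argument, and I would present the Chabauty version as the main argument, mentioning the countability theorem only to simplify the measurability.

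The main obstacle I expect is justifying that $\Phi$ is Borel, or equivalently that $\{\rho\,;\, H_\rho\subset U\}$ and $\{\rho\,;\, H_\rho\cap V\neq\emptyset\}$ are Borel for $U$ closed and $V$ open. For the second set this is clear: it equals the union over $w$ of the open sets $\{\rho(w)\in V\}$. For the first, $H_\rho\subset U$ iff $\rho(w)\in U$ for all $w$, which is a countable intersection of closed sets. These two families generate the Effros Borel structure, which coincides with the Chabauty Borel structure, so $\Phi$ is Borel. This completes the proof.
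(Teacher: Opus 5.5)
Your proposal is correct, and it takes a genuinely different route from the paper. The paper works in two stages. First it invokes Theorem~\ref{par:countably_many_subgroups} (countably many conjugacy classes of closed subgroups, resting on Montgomery--Zippin and on the fact that a closed subgroup cannot strictly contain one of its conjugates). This writes $\Rep(\F_n;G)$ as a countable disjoint union of invariant measurable sets $\Epitop(\F_n;H_i)^G$, and ergodicity selects the one of full measure. Second, it checks by hand that the map $p$ with values in $G/\mathrm{N}(H)$, recording which conjugate of $H$ is the closure of the image, is measurable and invariant, and applies ergodicity again.

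You apply the ergodicity principle only once, to the invariant map $\rho\mapsto\overline{\rho(\F_n)}$ into the compact metrizable space of closed subgroups. Your measurability check is clean. The set $\{\rho\,;\,\overline{\rho(\F_n)}\cap V\neq\emptyset\}=\bigcup_{w\in\F_n}\{\rho\,;\,\rho(w)\in V\}$ is even open, and these hitting sets generate the Borel structure of the Hausdorff/Chabauty topology on closed subsets of a compact metric space. Measurability of $\Epitop(\F_n;H)=\Phi^{-1}(\{H\})$ is then automatic.

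One small point of phrasing in the last step: "the intersection of the full-measure ones is a single point" needs the separating family to be closed under complements. The cleanest formulation is that $\Phi_*\mu$ is a $\{0,1\}$-valued Borel probability measure on a compact metrizable space, hence a Dirac mass.

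Your route buys economy and generality. It needs neither the countability theorem, whose proof the paper only sketches, nor the Lie structure of $G$, and it works verbatim for any compact metrizable group. The paper's route yields the countable partition of $\Rep(\F_n;G)$ by conjugacy type of the closure of the image. That is of some independent interest but not needed for the lemma as stated.
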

\begin{proof}
{\textit{Step 1.--}} {\textit{There is a closed subgroup $H$ 
such that $\mu$ is supported by the set of morphisms $\rho\colon \F_n\to G$ 
such that $\overline{\rho(\F_n)}$ is a conjugate of~$H$. }}
			
According to Theorem~\ref{par:countably_many_subgroups}, we can fix   a countable family  $ (H_i)_{i \in I} $ of closed subgroups, with exactly one $H_i$ per conjugacy class.
	
For subsets $ A \subset G $and $ X \subset \Rep(\F_n; G) $, we set 
$ X^{A} = \left\{ g \rho g^{-1}; \, g \in A, \, \rho \in X \right\} $.
Remark that $ X^{A} $ is compact if $ X $
and $ A $ are compact.
	
Fix a closed subgroup $ H $. 
Let $ I_{H} \subset I $
be the set of indices $ i $ corresponding to conjugacy classes (in G) of proper closed subgroups of $H$.
We have 
\[
\Epitop(\F_n; H)^{G}
=
\Rep(\F_n; H)^{G}
\setminus
\bigcup_{i \in I_H} 
\Rep(\F_n; H_i)^{G}
\]
(here we use, as in the proof of Theorem~\ref{par:countably_many_subgroups}, that a closed subgroup cannot contain strictly one of its conjugates, see Lemma 2.1 of~\cite{Lee-Wu}). Hence
	$ \Epitop(\F_n; H)^{G} $ 
	is measurable,
	for
	each
	$ \Rep(\F_n; H_i)^{G} $
	is compact.
	We have
	$$
	\Rep(\F_n; G)
	=
	\bigsqcup_{i} 
	\Epitop(\F_n; H_i)^{G},
	$$
	where the union is disjoint.
	As each
	$ \Epitop(\F_n; H_i)^{G}  $ 
	is
	$ \Aut(\F_n) $-invariant,
	the ergodicity  implies that $ \mu $ gives mass $1$ to
	$ \Epitop(\F_n; H)^{G} $
	for  a unique closed subgroup $ H $ of $ G $.

	{\textit{Step 2.--}} {\textit{ The measure 
	$ \mu $ is supported
	on a unique 
	conjugate of
	$ \Epitop(F_n; H )$.}}
	
	For $ g_1$ and $ g_2 \in G $,
	the two subsets
	$ \Epitop(F_n; H )^{g_1}$
	and
	$\Epitop(F_n; H )^{g_2}$
	intersect
	if and only if
	they are equal,
	and this happens
	exactly when
	$ g_{2}^{-1} g_1 $ 
	is in
	$ {\mathrm{N}}(H) $, 	the normalizer of $ H $
	in $ G $.
	In other words,
	$ \Epitop(\F_n; H)^{G} $
	is partitioned by the subsets
	$ \Epitop(F_n; H)^{g} $
	for $ g \in G / {\mathrm{N}}(H) $.
	Denote by $ p $ the map
	from
	$ \Epitop(\F_n; H)^{G} $
	to
	$ G / {\mathrm{N}}(H) $
	which maps a representation
	$ \rho $ 
	to the unique $ g \in G / {\mathrm{N}}(H) $
	such that
	$ \rho \in
	\Epitop(F_n; H)^{g} $.
	It is a measurable  map (\footnote{Indeed, 
	let $ \bar{A} $
	be a compact subset of
	$ G / {\mathrm{N}}(H) $,
	and $ A $ its preimage in $ G $.
	The inverse image
	$ p^{-1}(\bar{A}) $
	is equal 
	$ \Epitop(\F_n; H)^{A} $.
	As before, 	
	$
	\Epitop(\F_n; H)^{A}$
	coincides with
	$ \Rep(\F_n; H)^{A}
	\setminus
	\bigcup_{i \in I_H} 
	\Rep(\F_n; H_i)^{G},
	$
	hence this subset is measurable.
	This implies that $ p $ is measurable.}).
	And it is $\Aut(\F_n)$-invariant. Thus, by ergodicity, $\mu$ is supported on a single fiber of $p$.
\end{proof}

\subsubsection{Unique ergodicity on redundant representations} 
\begin{lem}\label{lem:ergodic_redundant}
Let $G$ be a compact subgroup of $\GL_m(\C)$. Suppose $n\geq  N_\#(m)$. 
Let $\mu$ be an $\Aut(\F_n)$-invariant, ergodic probability measure on $\Rep(\F_n;G)$.
If $\mu$ gives mass $1$ to the set 	$ \Redtop^4(\F_n; G) \cap \Epitop(\F_n; G) $, then 
$ \mu $ is the algebraic measure~$ \mes_{G}^n $.
\end{lem}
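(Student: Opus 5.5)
Since $\mu$ is carried by $\Redtop^4(\F_n;G)\cap\Epitop(\F_n;G)$, I will show that $\mu$ is invariant under right multiplication of one coordinate by elements of $G^\circ$, and more precisely under a large enough group of ``coordinate translations'' to force $\mu=\mes_G^n$. The key observation, in the spirit of fact~(b) of Lemma~\ref{lem:red_implies_dense}, is this. If $(g_1,\dots,g_n)$ is a tuple with $\langle g_1,\dots,g_{n-1}\rangle_{top}=G$, then every map $g_n\mapsto g_n w(g_1,\dots,g_{n-1})$, with $w$ a word, is a Nielsen move. The right translate of $g_n$ by an arbitrary element $h\in G$ is then a limit of such moves. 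I would make this measure-theoretic in three steps.

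First, decompose the support. Since $\mu$ gives full mass to $4$-redundant epimorphisms, and there are countably many automorphisms $\varphi\in\Aut(\F_n)$, ergodicity and invariance let us assume, after pushing forward by one $\varphi$, that a positive-measure $\Aut(\F_n)$-saturated set has $\langle g_1,\dots,g_{n-4}\rangle_{top}=G$. Let $E$ be the set of tuples with $\langle g_1,\dots,g_{n-1}\rangle_{top}=G$; then $\mu(E)>0$. Consider the subgroup $\Gamma$ of $\Aut(\F_n)$ of transvections $a_n\mapsto a_n w(a_1,\dots,a_{n-1})$. On $E$, disintegrate $\mu$ with respect to the projection $(g_1,\dots,g_n)\mapsto(g_1,\dots,g_{n-1})$, as in~\S\ref{par:disintegration}. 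The conditional measures $\mu_y$ on the fiber $\{y\}\times G$ are invariant under right multiplication by the dense subgroup $\langle y\rangle$, hence by $G$, since the action is continuous. So $\mu_y=\haar_G$ for almost every $y$, where the fiber is identified with $G$.

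Second, use the extra redundancy to propagate this to every coordinate. Using $4\geq 2$ spare generators, any coordinate can be moved to the last slot while keeping a topologically generating set among the others. The set $E_i$ of tuples where the coordinates other than $i$ generate $G$ contains $E'=\{\langle g_1,\dots,g_{n-4}\rangle_{top}=G\}$ for every $i>n-4$. The restriction $\mu|_{E'}$ is therefore invariant, in the sense of conditional measures, under right translation of each of the last four coordinates by $G$. Note that $E'$ is itself invariant under these translations. The first $n-4$ coordinates can then be handled by Nielsen moves $g_i\mapsto g_i g_j$ with $j>n-4$, once $g_j$ is Haar-distributed. Alternatively, apply a permutation and the $\Aut(\F_n)$-invariance of $\mu$ to swap blocks. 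Combining these, $\mu|_{E'}$ is invariant under right translation by $G^n$ restricted appropriately. I would aim to show $\mu$ is invariant under $(G^\circ)^n$ acting on the right, plus translation compatible with the fibers of $\#$.

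Third, conclude: a probability measure invariant under right multiplication by $(G^\circ)^n$ is a combination of the fiber measures in~\eqref{eq:mes_H}. Its pushforward to $\Rep(\F_n;G^\#)$ is $\Aut(\F_n)$-invariant and ergodic and is carried by $\Epi(\F_n;G^\#)$. Since $n\geq N_\#(m)$, Corollary~\ref{cor:jordan_and_compact}(2) says $\Epi(\F_n;G^\#)$ is a single finite orbit, so the pushforward is uniform. Hence $\mu=\mes_G^n$.

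The main obstacle is the second step: turning invariance of conditional measures on the subset $E'$ into global translation invariance. The difficulty is that $E'$ is not $\Aut(\F_n)$-invariant and the relevant transvections depend on which coordinates generate. I would first try to avoid this by showing directly that the full invariance holds on each $\varphi(E')$ for each $\varphi$, and that these sets cover a full-measure set by $4$-redundancy. Invariance of a measure under a group action on each piece of a countable cover, with the pieces themselves invariant under the translations, should then glue.
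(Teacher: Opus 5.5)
Your Steps 1 and 2 are sound. On $E$ the conditional law of $g_n$ is $\haar_G$. Since $E'$ is a union of fibres of each projection forgetting one of the last four coordinates, $\mu|_{E'}$ is the product of its projection to $G^{n-4}$ with $\haar_G^{\otimes 4}$. Your last step would also be fine \emph{if} global $(G^\circ)^n$-invariance were known.

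\textbf{The gap.} The proof stops exactly at the point you flag as the main obstacle, and the tools you suggest there cannot close it.
\begin{enumerate}
\item A move $g_i\mapsto g_iw$ translates $g_i$ by the value of a word in other coordinates. To read it as a translation on a fibre, you must condition on those multiplier coordinates, and you then only get translations by the closed group they generate.
\item With one Haar-distributed $g_j$, this group is the abelian group $\overline{\langle g_j\rangle}$. Without conditioning, you only get a diagonal translation of all moved coordinates by the same element. Neither gives independent translations of each $g_i$ by arbitrary elements of a non-abelian $G^\circ$.
\item Swapping blocks by a permutation just moves $E'$ elsewhere.
\item ``Invariance on each $\varphi(E')$'' is the push-forward of invariance on $E'$, which is precisely what is missing. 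Moreover, $E'$ is not stable under right translation of its first $n-4$ coordinates, so $\mu|_{E'}$ is a poor object on which to seek $(G^\circ)^n$-invariance.
\end{enumerate}

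\textbf{The missing ingredient} is Theorem~\ref{thm:top_generators}(2): Haar-almost every pair in $(G^\circ)^2$ topologically generates $G^\circ$. With it, your Step~2 can be finished as in the paper's Step~4.
\begin{enumerate}
\item Since the last coordinates are $\haar_G$-distributed on $E'$, you can pick a fibre $F$ of $\#$ with $g^\#_{n-1}=g^\#_n=1^\#$ and $\mu(E'\cap F)>0$. Then $(\pi_{n-1,n})_*\mu_F\geq\alpha\,\haar_{G^\circ}^{\otimes 2}$ for some $\alpha>0$.
\item Disintegrate $\mu_F$ along $\pi_{n-1,n}$. Take $(h_{n-1},h_n)$ in the full-Haar-measure set of pairs generating $G^\circ$. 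The moves $g_i\mapsto g_i\,w_i(h_{n-1},h_n)$, $i\le n-2$, with independent words $w_i$, preserve $F$ and the fibre. Their values are dense in $(G^\circ)^{n-2}$.
\item So these conditional measures are Haar on $\prod_{i\le n-2}\hat g_iG^\circ$. Working inside a single fibre of $\#$ is what prevents them from being mixtures over several cosets.
\item Hence $\mu_F\geq\alpha\,\mes_F$. By transitivity on $\Epi(\F_n;G^\#)$, $\mu\geq\alpha'\mes_G^n$.
\end{enumerate}

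Then, instead of aiming for global invariance, conclude by ergodicity. Since $\mes_G^n\ll\mu$, the Radon--Nikodym derivative is $\Aut(\F_n)$-invariant, hence constant by ergodicity of $\mu$, so $\mu=\mes_G^n$. The paper phrases this last step using ergodicity of both $\mu$ and $\mes_G^n$, the latter proved in its Step~3.
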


Before starting the proof, as in \S~\ref{par:intro_main_result}, consider the natural map 
$$\# \colon \Rep(\F_n; G) \mapsto \Rep(\F_n; G^\#);$$
 to a
representation $\rho\colon \F_n\to G$, it associates the representation $\rho^\#\colon \F_n\to G^\#$ 
obtained by composition with the projection $\pi\colon G\to G^\#$. 
For each element $g^\#$ in $G^\#$, we fix an element $\hat{g}\in G$ with $\pi(\hat{g})=g^\#$ and assume that  
the lift of the neutral element $1^\#$ of $G^\#$ is the neutral element $1\in G$. 
Then, given any element $(g^\#)=(g_1^\#, \ldots, g_n^\#)$ of $(G^\#)^n=\Rep(\F_n; G^\#)$, the fiber $F=\#^{-1}(g^\#)$ can be identified to $\prod_{i=1}^n \hat{g_i} G^\circ$. This gives a map 
$$(h_1, \ldots, h_n)\in (G^\circ)^n \mapsto  (\hat{g_1}h_1, \ldots, \hat{g_n}h_n)$$
parametrizing $F$. The image of the Haar measure $\mes_{G^\circ}^n$ under this map is, by definition, the measure $\mes_F$. As explained in \S~\ref{par:intro_main_result}, this measure $\mes_F$ does not depend on the preliminary choices of lifts $\hat{g}$. And if $\varphi\in \Aut(\F_n)$ maps a fiber $F$ of $\#$ to a fiber $F'$, then $\varphi_*(\mes_F)=\mes_{F'}$. Moreover, 
\begin{equation}
\label{eq:measure_mGn}
\mes_G^n=\frac{1}{\vert \Epi(\F_n;G^\#) \vert}\sum_{(g^\#)\in \Epi(\F_n; G^\#)} \mes_{F_{(g^\#)}}
\end{equation}
where $F_{(g^\#)}$ is the fiber of $\#$ above $(g^\#)$.

\begin{proof}[Proof of Lemma~\ref{lem:ergodic_redundant}] Denote by $\pi_{1, n-4}\colon G^n\to G^{n-4}$ the projection to the first $n-4$ factors, and by $\pi_{n-1,n}$ the projection on the last two factors; similarly, let $\pi_{n-3,n-2}$ be the projection $(g_1, \ldots, g_n)\mapsto (g_{n-3},g_{n-2})$. 

\vspace{0.1cm}

{\sl{Step 1.--}}
 Since $n\geq  N_\#(m)$, we know from Corollary~\ref{cor:jordan_and_compact} that $\Aut(\F_n)$ acts transitively on $\Epi(\F_n;G^\#)$. Thus, the projection of $\mu$ on 
$\Epi(\F_n;G^\#)$ is the counting measure. All we have to prove is that the restriction $\mu_F$ is proportional to $\mes_F$ for every fiber $F$ of $\#$. 

\vspace{0.1cm}

{\sl{Step 2.--}} Let
	$ R = \pi_{1, n-4}^{-1}(
	\Epitop(\F_{n-4}, G))$ be the set of
		$ (g_1, \dots, g_n) $
	such that
	$ (g_1, \dots, g_{n-4}) $
	generates a dense subgroup of $G$.
	Then
	$$
	\Redtop^4(\F_n; G) \cap \Epitop(\F_n; G)
	=
	\bigcup_{\varphi \in \Aut(\F_n)} \varphi(R)
	.
	$$
	The measure $ \mu ( \varphi(R) ) = \mu(R) $
	is independent of $ \varphi \in \Aut(\F_n)$ and 
	by assumption
	$ \mu
	(
	\Redtop^4(\F_n; G) \cap \Epitop(\F_n; G)
	)
	= 1 $,
	hence
	$ \mu(R) > 0 $.
	Since $\Epi(\F_n; G^\#)$ is finite, at least one fiber $F$ of $\#$ satisfies 
	$$\mu(R\cap F)>0.$$
	We fix such a fiber $F$ and denote by $(g_1^\#, \ldots, g_n^\#)=\#(F)$ its image in $\Epi(\F_n; G^\#)$.
	By definition of $R$, $(g_1^\#, \ldots, g_{n-2}^\#)$ generates $G^\#$. Thus, applying an automorphism $\varphi$ of $\F_n$ that fixes the first $n-4$ generators  $a_1, \ldots, a_{n-4}$, we can assume that $g_{j}^\#=1^\#$ for $j=n-3, n-2, n-1, n$. Doing so, we obtain a new fiber $F'$ of $\#$, namely the one above $(g_1^\#, \ldots, g_{n-4}^\#, 1^\#, \ldots, 1^\#)$, which still satisfies $\mu(R\cap F')>0$. For simplicity, we denote $F'$ by $F$ and assume $g_{j}^\#=1^\#$ for $j\geq n-3$.

\vspace{0.1cm}

{\sl{Step 3.--}} Let us prove that the Haar measure $\mes_F$ is ergodic under the action of the stabilizer of $F$ in $\Aut(\F_n)$. This is a variation on a theorem of Gelander (see~\cite{Gelander:Aut(Fn)2008}).

For this, set $\Aut(\F_n)_F$ to be the stabilizer of $F$ (equivalently, of $\#(F)$ in $\Epi(\F_n; G^\#)$), and suppose $A\subset F$ is an  $\Aut(\F_n)_F$-invariant and measurable subset of positive Haar measure. Then, its projection $\pi_{n-1,n}(A)$ is a measurable subset of $(G^\circ)^2$ of positive Haar measure. By Theorem~\ref{thm:top_generators}\,(2), almost every pair $(h_{n-1}, h_n)$ in $\pi_{n-1,n}(A)$ generates a dense subgroup of $G^\circ$. Now, using Nielsen moves of type 
$$
(\hat{g}_1 h_1, \ldots, \hat{g}_{n-2} h_{n-2}, h_{n-1}, h_n)\mapsto (\hat{g}_1 h_1 w_1 , \ldots, \hat{g}_{n-2} h_{n-2}w_{n-2}, h_{n-1}, h_n)
$$
where each $w_j$ is a word in $ h_{n-1}$ and $h_n$, we see that for almost every element in~$A$, the whole fiber of $\pi_{n-1, n}$ through that element is almost entirely contained in $A$. We shall say that $A$ is saturated with respect to the projection $\pi_{n-1, n}$.

Then, consider the projection $\pi_{n-3,n-2}$. By what we  
proved, $\pi_{n-3,n-2}(A)$ is a subset of total Haar measure in $(G^\circ)^2$. Now, we consider moves of type 
$$
(\hat{g}_1 h_1, \ldots, \hat{g}_{n-2} h_{n-2}, h_{n-1}, h_n)\mapsto (\hat{g}_1 h_1  , \ldots, \hat{g}_{n-2} h_{n-2}, h_{n-1}w_{n-1}, h_nw_n),
$$
where $w_{n-1}$ and $w_n$ are words in $h_{n-3}$ and $h_{n-2}$ (recall that $\hat g_{n-3}=1_G=\hat g_{n-2}$); then,  the same argument shows that~$A$ is saturated with respect to the projection $\pi_{1, n-2}$. 
These two saturation properties imply that $A$ is in fact a subset of total Haar measure. Thus $\mes_F$ is ergodic. 

As a consequence, $\mes_G^n$ is ergodic for the action of $\Aut(\F_n)$, because $\Aut(\F_n)$ permutes transitively the fibers of $\#$.

\vspace{0.1cm}

{\sl{Step 4.--}}  Let us come back to the study of $\mu_F$. By Steps 1 and 2, $\mu_F(R\cap F)>0$. 
Let us disintegrate $\mu_F$ with respect to the projection $\pi_{1, n-2}$. The conditional measures $\mu_{(g)}$, for 
$$(g)\in \prod_{i=1}^{n-2}\hat{g_i}G^\circ,$$
are probability measures on $(G^\circ)^2$. Set $S:=\pi_{1, n-2}(R\cap F)$. For $(g)$ in $S$, the measure $\mu_{(g)}$
is invariant by all right translations 
$$
(h_{n-1}, h_n)\mapsto (h_{n-1} w_{n-1}, h_{n} w_n)
$$
where $w_{n-1}$ and $w_n$ are any words in $(g)=(\hat{g_i}h_i)$ such that $w_{n-1}(g^\#)=1^\#=w_n(g^\#)$.
For $(g)$ in $S$, these pairs of words form a dense subset of $(G^\circ)^2$. Thus, $\mu_{(g)}$ coincides with the Haar measure $\mes_{G^\circ}^2$. Since $\mu_F(R\cap F)>0$, we conclude that 
$$(\pi_{n-1,n})_*\mu_F \geq \alpha \mes_{G^\circ}^2$$ 
for some $\alpha   \geq \mu_F(R\cap F)> 0$. By this we mean that if  $B\subset (G^\circ)^2$ is  measurable, then  $(\pi_{n-1,n})_*\mu_F(B)\geq \alpha \mes_{G^\circ}^2(B)$. 

Now, for a set $P\subset (G^\circ)^2$ of total measure for $\mes_{G^\circ}^2$, each pair $(h_{n-1},h_n)\in P$ 
generate a dense subgroup of $G^\circ$. Using right multiplication by words in $(h_{n-1}, h_n)$ on the fibers of
$\pi_{n-1,n}\colon F\to (G^\circ)^2$, we deduce that the conditional measures $\mu_{(h_{n-1}, h_n)}$ of $\mu_F$ 
are Haar measures (more precisely, are images of the Haar measure $\mes_{G^\circ}^{n-2}$ by the parametrization of $F$). Thus, $\mu_F\geq \alpha' \mes_F$ for some $\alpha' > 0$. By invariance of $\mu$ and $\mes_G^n$ under the action of $\Aut(\F_n)$, this property remains true on every fiber of $\#$. Thus, $\mu\geq \alpha' \mes_G^n$. 
And by ergodicity of $\mu$ and $\mes_G^n$, we deduce that $\mu=\mes_G^n$.
\end{proof}

\subsubsection{Conclusion} 
We now show Assertion (3) of
Theorem \ref{thm:main}.
Let $ G $ be a compact Lie group. 
Let $ m $ be the smallest integer $ \ge 1 $ such that $ G $ embeds in $ \GL_{m}(\C) $. 
Fix $n\geq N(m) + 3$. 
By Lemma~\ref{lem:support}, $\mu$ is supported on $\Epi(\F_n; H)$ for some compact group $H\subset G\subset\GL_m(\C)$.
By Theorem~\ref{thm: rdu}, $\Epi(\F_n; H)\subset \Red^4(\F_n;H)$.
Since $n\geq N(m)$, we have $n\geq N_\#(m)$ and 
Lemma~\ref{lem:ergodic_redundant} can be applied to the Lie group $H$.  Hence $\mu$ is the algebraic measure $\mes_H^n$.

\begin{rem} 
\label{rem:estimation_Nm}
In Section~\ref{12main} we used the lower bound $N(m)+2 + \left\lfloor 3m/2 \right\rfloor $, here we used $N(m)+3$, which is smaller. From Equation~\eqref{eq:N_m}, Collins' theorems in~\cite{Collins:Jordan}, and Stirling's formula, this is bounded from above by 
$ b + 3m^2  + 2m^2\log_2(m) $ for some constant $b\leq10^3$ (which can be replaced by $0$ for $m\geq 71$). This justifies the Inequality~\eqref{eq:N(m)_intro}.
\end{rem}

\section{Applications}
\label{par:Applications}

\subsection{Character varieties} 
As in Section~\ref{par:peter-weyl}, we endow the compact Lie group $G$ with its natural real algebraic structure.

Let $\pi\colon \Rep(\F_n; G)\to \chi(\F_n;G)$ be the projection onto the character variety that is, on the 
quotient $\chi(\F_n;G)=  \Rep(\F_n; G)/\!\!\!/ G$ for the action of $G$ by conjugacy. Here, the quotient is 
taken in the sense of geometric invariant theory, so that $\pi$ is a morphism of algebraic varieties, hence continuous. By compactness of $G$, the fibers of $\pi$ are orbits of the conjugacy action on $\Rep(\F_n; G)$.

The action of $\Aut(\F_n)$ on $\Rep(\F_n; G)$ induces an action on $\chi(\F_n; G)$ which factors through the group $\Out(\F_n)=\Aut(\F_n)/\Inn(\F_n)$.

If $H$ is a compact group its Haar measure $\mes_H$ is invariant under the action of the group of continuous automorphisms $\Aut(H)$. Thus, if $H$ is a closed subgroup of $G$, $\mes_H$  is invariant under the action of
the normalizer of~$H$ in~$G$. Now, consider the  probability measure $\mes_H^n$ defined in the Introduction, the support of which is $\Epifin(\F_n;H)$, and project it to get a probability measure 
$
\pi_*\mes_H^n
$
on $\chi(\F_n;G)$;  this measure depends only on the conjugacy class $[H]$ of $H$ in $G$. Doing so, we obtain 
at most countably many distinct probability measures $\mes_{[H]}^n:=\pi_*\mes_H^n$. Theorem~\ref{thm:main} directly implies the following statement.

\begin{thm}
\label{thm:dynamics_on_character_bounded}
Let $G$ be a compact subgroup of $\GL_m(\C)$. If $n\geq N(m)+2+\lfloor\frac{3m}{2} \rfloor$,
the $\Out(\F_n)$-invariant and ergodic probability measures in $\chi(\F_n;G)$ are exactly the measures $\mes_{[H]}^n$. If $K$ is a compact invariant subset on which the action of $\Out(\F_n)$ is topologically transitive, then $K=\pi(\Epifin(\F_n;H))$ for a unique closed subgroup $H$ of $G$. 
\end{thm}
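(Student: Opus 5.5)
The plan is to push everything through the continuous, surjective, $\Aut(\F_n)$-equivariant map $\pi\colon \Rep(\F_n;G)\to\chi(\F_n;G)$ and apply Theorem~\ref{thm:main} upstairs. The threshold $n\geq N(m)+2+\lfloor 3m/2\rfloor$ is exactly the one used in Section~\ref{12main}, and it exceeds $N(m)+3$. Hence both the orbit-closure part and the measure part of Theorem~\ref{thm:main} hold for every closed subgroup $H\subset G\subset\GL_m(\C)$.

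\emph{Measures.} Let $\nu$ be an $\Out(\F_n)$-invariant ergodic probability measure on $\chi(\F_n;G)$, viewed as an $\Aut(\F_n)$-invariant measure. I would lift it by averaging over conjugation: choose any Borel lift $\tilde\nu$ with $\pi_*\tilde\nu=\nu$, for instance via a measurable section, which exists since the fibers are compact $G$-orbits. Then set $\mu=\int_G c_{g*}\tilde\nu\,d\haar_G(g)$, where $c_g$ denotes conjugation by $g$. This $\mu$ is $G$-invariant and $\pi_*\mu=\nu$.

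$\mu$ is also $\Aut(\F_n)$-invariant. The reason is that for $\varphi\in\Aut(\F_n)$, both $\varphi_*\mu$ and $\mu$ are $G$-invariant lifts of $\nu$ (the $G$- and $\Aut(\F_n)$-actions commute). A $G$-invariant lift of $\nu$ is unique: its disintegration along $\pi$ must consist of the unique $G$-invariant probability on each fiber $G\cdot\rho$.

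Next I would take an ergodic decomposition of $\mu$ for $\Aut(\F_n)$. Each ergodic component is some $\mes_H^n$ by Theorem~\ref{thm:main}(3), and it projects to $\mes^n_{[H]}$. Since $\nu$ is ergodic and $\nu=\int \pi_*(\text{components})$, almost every projected component must equal $\nu$, because an ergodic measure is extremal. Hence $\nu=\mes^n_{[H]}$ for some $H$.

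Conversely, $\mes^n_{[H]}$ is ergodic, since it is the image of the ergodic measure $\mes_H^n$ (Theorem~\ref{thm:main}(3), or Step~3 of Lemma~\ref{lem:ergodic_redundant}) under an equivariant map. Distinct classes $[H]$ give distinct measures: the support of $\mes^n_{[H]}$ is $\pi(\Epifin(\F_n;H))$, and the closure of the group generated by a generic point in that support recovers the conjugacy class of $H$.

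\emph{Topologically transitive compact sets.} Let $K$ be compact, invariant and topologically transitive. I expect the main obstacle here: topological transitivity gives a dense orbit only if some Baire argument applies. $K$ is a compact metrizable space and $\Out(\F_n)$ is countable, so by Baire there is a point $x\in K$ with dense orbit.

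Lift $x$ to some $\rho$. Then $\pi$ of the orbit closure of $\rho$ is closed, since it is the image of a compact set, and it contains the orbit of $x$. Hence $K=\pi\bigl(\overline{\Aut(\F_n)\rho}\bigr)=\pi(\Epifin(\F_n;H_\rho))$ by Theorem~\ref{thm:main}(1).

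For uniqueness of $H$ (up to conjugacy, which is the meaningful sense of uniqueness here), suppose $\pi(\Epifin(\F_n;H))=\pi(\Epifin(\F_n;H'))$. Then a point of $\Epitop(\F_n;H)$ is conjugate to a point of $\Epifin(\F_n;H')$, so $H$ is conjugate into $H'$. By symmetry $H'$ is conjugate into $H$. Lemma~2.1 of~\cite{Lee-Wu} (a closed subgroup cannot strictly contain a conjugate of itself) then gives that $H$ and $H'$ are conjugate.
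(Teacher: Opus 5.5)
Your proposal is correct and takes the paper's route: the paper only states that this theorem follows directly from Theorem~\ref{thm:main} through the equivariant projection $\pi$. Your argument supplies the details left implicit there, namely the conjugation-averaged lift of $\nu$ to an $\Aut(\F_n)$-invariant measure, extremality of ergodic measures, the Baire argument giving a dense orbit, and Lemma~2.1 of~\cite{Lee-Wu} for uniqueness, which holds only up to conjugacy in $G$ (the only meaningful reading of the statement).
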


\subsection{Compact orbits in non-compact groups}%
\label{sec:non-compact}
In this section, we replace $G$ by the non-compact linear group $\SL_m(\R)$, for some $m\geq 1$. 
We still denote by $\pi\colon \Rep(\F_n;\SL_m(\R))\to \chi(\F_n;\SL_m(\R))$ the projection on the character variety. 
Unlike the compact case, the fibers
are not conjugacy classes in general.
Two representations are identified in the character variety
when
the closures of their conjugacy classes intersect.

A representation~$\rho\colon \F_n\to \SL_m(\R)$ is {\textit{reducible}} if its image preserves a proper
non-trivial subspace of $\R^m$; otherwise, it is {\textit{irreducible}}. 
It is \emph{completely reducible}
if it preserves each factor of a decomposition
$ \R^{m} = \bigoplus V_{i} $
and its restriction to each factor is irreducible.

If $ \rho $ is reducible,
we can find a
completely reducible representation $ \rho_0 $
such that $ \rho $ and $ \rho_{0} $
have the same image in the character variety.
By Jordan-Hölder theory, it is unique up to conjugacy.
We call $ \rho_{0} $ the
\emph{semisimplification}
of $ \rho $.
To construct it, 
assume that $\rho$ preserves a non-trivial and proper subspace $V\subset \R^m$. 
Let $W$ be any subspace such that $\R^m=V\oplus W$. Let $r_t$ be the linear transformation of $\R^m$
acting by multiplication by $t^w$ on $V$ and by $t^{-v}$ on $W$ with $v=\dim(V)$ and $w=\dim(W)$,
so that  $r_t$ is in $\SL_m(\R)$.
If one conjugates $\rho$ by $r_t$ and let $t$ go to $+\infty$, then at the limit 
one gets a new representation $\rho'$ that preserves each factor of the direct sum $V\oplus W$.
Repeating this process at most $m$ times, we see that there is a completely
reducible representation $\rho_0$ in the closure of the conjugacy class of
$ \rho $. As the projection $ \pi $ is continuous,
$ \rho $ and $ \rho_{0} $ have the same image in the character variety.

\begin{thm}
\label{thm:boundedness_in_charac_var}
	Let $ m $ be a positive integer. 
	Let  $ n $ be  large enough.
	Let $\rho$ be an element of  $ \Rep(\F_n; \SL_m(\R)) $.
	If the
	$ \Out(\F_n) $-orbit of
	$ \pi(\rho) $ 
	is relatively compact in
	$ \chi(\F_n; \SL_m(\R)) $,
		then the semisimplification
	$\rho_0 $
	of $ \rho $ 
	takes values in a compact group. 
	If moreover 
	$ \rho $ is completely reducible,
	$ \rho $ itself takes value in a compact group.
\end{thm}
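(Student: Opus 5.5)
First reduce to the completely reducible case. Since $\pi(\rho)=\pi(\rho_0)$ and the projection $\pi$ is $\Aut(\F_n)$-equivariant, the $\Out(\F_n)$-orbit of $\pi(\rho_0)$ is relatively compact as well. So it suffices to show: if $\rho$ is completely reducible and $\Out(\F_n)\cdot\pi(\rho)$ is relatively compact, then $\rho$ takes values in a compact group.

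The plan is to bound traces and use the redundancy Theorem~\ref{thm: rdu_alg} with $\bfk=\R$ (or $\C$).

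\textbf{Step 1: bounded traces.} The functions $\pi(\rho)\mapsto \tr(\rho(w))$, for $w\in\F_n$, are regular on $\chi(\F_n;\SL_m(\R))$. Relative compactness of the orbit therefore gives, for each fixed word $w$, a bound
\[
\sup_{\varphi\in\Aut(\F_n)}\vert\tr(\rho(\varphi(w)))\vert<\infty .
\]
Taking $w=a_1$ and a single bound $C$, we get $\vert\tr(\rho(\gamma))\vert\le C$ for every primitive element $\gamma$ of $\F_n$. The same holds for $\tr(\rho(\gamma)^k)$ for each fixed $k$, since $a_1^k$ is a fixed word. Hence every eigenvalue of $\rho(\gamma)$ has modulus $1$: otherwise $\tr(\rho(\gamma)^k)$ would be unbounded in $k$. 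This gives no uniformity in $\gamma$ yet, but it shows that every primitive element maps to an element whose eigenvalues all lie on the unit circle.

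\textbf{Step 2: redundancy.} Let $H$ be the Zariski closure of $\rho(\F_n)$, an algebraic subgroup of $\GL_m(\C)$. Since $\rho$ is completely reducible, $H$ is reductive. Assume $n\ge Z(m)+k$. By Theorem~\ref{thm: rdu_alg} and its $k$-redundant version, after a Nielsen move we may assume that $g_1,\dots,g_{n-k}$ already generate a Zariski dense subgroup of $H$. Elements of the form $g_i g_j^{\pm1}$, $g_i w$ with $w$ a word in the other generators, and more generally any image of a basis element under $\Aut(\F_n)$, are then all images of primitive elements. In particular, $g_1 h$ is the image of a primitive element for every $h$ in the group $\Gamma'=\langle g_2,\dots,g_n\rangle$, and this group is Zariski dense in $H$ by redundancy. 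So the set $g_1\Gamma'$ consists of elements with all eigenvalues of modulus $1$, and it is Zariski dense in the coset $g_1H=H$.

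\textbf{Step 3: from eigenvalues on the circle to compactness.} The main obstacle is to pass from ``a Zariski dense subset has eigenvalues of modulus $1$'' to ``$H$ has compact real points containing $\rho(\F_n)$''. Modulus $1$ is not an algebraic condition, so this step needs more than Zariski density. My first approach would be to use the uniform bound from Step 1 directly: the coefficients of the characteristic polynomials of $\rho(\gamma)$ over all primitive $\gamma$ lie in a bounded set. Combined with the redundancy, this gives that the traces $\tr(\rho(\gamma))$ are bounded on the set $\gamma\in a_1\langle a_2,\dots,a_n\rangle$. The function $h\mapsto\tr(g_1h)$ is then bounded on $\Gamma'$. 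Applying the same argument with $g_1$ replaced by the identity (possible using a spare generator equal to $1$, via redundancy with $k\ge1$), $\tr$ is bounded on the whole group $\Gamma'$, and indeed on $\rho(\F_n)$.

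A completely reducible subgroup of $\SL_m(\R)$ with bounded traces is relatively compact. One can see this via Burnside: on each irreducible factor, the traces of a spanning family of group elements give linear coordinates, so bounded traces bound the matrix entries, and a bounded subgroup has compact closure. It follows that $\rho(\F_n)$ lies in a compact group, which finishes the completely reducible case and, by the reduction above, the statement for $\rho_0$. The delicate point, which I expect to be the main obstacle, is ensuring that the trace bound genuinely applies to all of $\rho(\F_n)$ and not only to primitive elements. That is exactly where the $k$-redundancy with spare trivial generators must be used carefully, with ``large enough'' taken as $n\ge Z(m)+2$.
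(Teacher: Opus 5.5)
\textbf{There is a genuine gap in Step 3, at exactly the point you flagged, and the repair you propose does not work.}

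\emph{Why the spare trivial generator is unavailable.} Zariski redundancy (Theorem~\ref{thm: rdu_alg}) only says that, after a Nielsen move, $g_1$ lies in the Zariski closure of $\Gamma'=\langle g_2,\dots,g_n\rangle$. A Nielsen move can turn a coordinate into $1$ only when that coordinate lies in the abstract group generated by the others, and nothing gives you this. Take $\rho$ injective, which is the generic case (for example a Zariski dense free subgroup). Then no point of $\Aut(\F_n)\rho$ has a coordinate equal to $1$, since that would produce a relation in $\rho(\F_n)$. In a non-compact group you do not even get approximations of $1$ in the orbit closure. So you never obtain a trace bound on $\Gamma'$, let alone on $\rho(\F_n)$, and your Burnside step has no input.

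The Burnside step itself is correct if you phrase it through nondegeneracy of the trace form on the semisimple algebra spanned by the group, rather than factor by factor. Separately, Step 1 is unjustified: the bound on $\tr(\rho(\gamma)^k)$ depends on $k$, so exponential growth in $k$ for a fixed $\gamma$ gives no contradiction. The claim that primitive elements have all eigenvalues of modulus $1$ is therefore unproved. Step 3 does not need it, but Step 2 states it.

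\emph{The missing idea.} What you actually have is $\vert\tr(g_1h)\vert\le C$ for all $h\in\Gamma'$. This needs no redundancy, since $ua_1v$ is primitive for $u,v\in\langle a_2,\dots,a_n\rangle$. What is missing is a way to exploit a bounded linear functional on the single coset $g_1\Gamma'$. Redundancy should be used to put $g_1$ inside the Zariski closure $H$ of $\Gamma'$, not to create a trivial generator.

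\emph{The paper's argument.} It treats irreducible $\rho$ first and argues by contradiction.
\begin{enumerate}
\item If $\rho(\F_{n-1})$ is unbounded, rescale to get $\lambda_k\rho(w_k)\to f\neq 0$ with $\lambda_k\to 0$. By irreducibility one may take $f$ non-nilpotent, after replacing $f$ by $\rho(w)f$.
\item The elements $w_1a_nw_2w_k$ are primitive and their traces are bounded, which forces $\tr(\rho(w_1)h\rho(w_2)f)=0$.
\item Zariski density of $\rho(\F_{n-1})$ extends this to $\tr(g_1hg_2f)=0$ for all $g_1,g_2\in G$. Taking $g_1=h^{-1}$ gives $\tr(gf)=0$ on $G$.
\item Hence $\tr(f^l)=0$ for all $l$, so $f$ is nilpotent, a contradiction.
\end{enumerate}
The completely reducible case is then reduced to the irreducible summands.

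\emph{A repair closer to your plan.}
\begin{enumerate}
\item For $u,v\in\Gamma'$, the functionals $X\mapsto\tr(vg_1uX)=\tr(g_1\,uXv)$ are all bounded by $C$ on $\Gamma'$.
\item The linear span $A$ of $\Gamma'$ is Zariski closed, so it contains $H\ni g_1^{\pm1}$. Therefore the elements $vg_1u$ span $A$.
\item The trace form on $A$ is nondegenerate because $A$ is semisimple ($\rho$ completely reducible). Hence $\Gamma'$ is bounded.
\item The closure of $\Gamma'$ is a compact, hence algebraic, group. It contains $H$, hence $g_1$, so $\rho(\F_n)$ lies in a compact group.
\end{enumerate}
This variant avoids proximality and handles the completely reducible case directly.
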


For reducible representations, passing to  the semisimplification is necessary.
For example, consider a non-trivial representation $\rho$ into the group of unipotent and  upper-triangular matrices.
Its image in $ \chi(\F_n; \SL_m(\R)) $ is the class of the trivial representation, hence it is fixed by $ \Out(\F_n) $,
but $\rho$ is not  bounded. 

\begin{cor}
\label{cor:minimal}
	For $ m \ge 1 $ and $ n $ large enough,
	the minimal $ \Out(\F_n) $-invariant compact subsets
	in
	$ \chi(\F_n; \SL_m(\R)) $
	are the sets
	$ \pi(\Epifin(\F_n; G)) $,
	where $ G $ is a compact subgroup
	of $ \SL_m(\R) $. 
\end{cor}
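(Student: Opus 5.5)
The plan is to prove both inclusions: every minimal compact $\Out(\F_n)$-invariant set $K\subset\chi(\F_n;\SL_m(\R))$ is of the form $\pi(\Epifin(\F_n;G))$ for a compact $G\subset\SL_m(\R)$, and conversely such sets are minimal. I take $n$ large enough for Theorem~\ref{thm:boundedness_in_charac_var} and for Theorem~\ref{thm:main} applied to every compact subgroup of $\SL_m(\R)$. Such subgroups lie in conjugates of $\SO_m(\R)\subset\UU_m(\C)$, so one threshold of the form $N(m)+2+\lfloor 3m/2\rfloor$ works uniformly.

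\emph{Minimal sets have the stated form.} Let $K$ be minimal and pick $x\in K$. Write $x=\pi(\rho)$ with $\rho$ completely reducible, replacing $\rho$ by its semisimplification $\rho_0$. The orbit of $x$ lies in $K$, so it is relatively compact. Theorem~\ref{thm:boundedness_in_charac_var} then shows that $\rho$ takes values in a compact group, hence in a conjugate of $\SO_m(\R)$; after conjugating, $H_\rho:=\overline{\rho(\F_n)}$ is a compact subgroup. By Theorem~\ref{thm:main}\,(1), the $\Aut(\F_n)$-orbit closure of $\rho$ in $\Rep(\F_n;H_\rho)$ is $\Epifin(\F_n;H_\rho)$. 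This set is compact and $\pi$ is continuous and equivariant, so the orbit closure of $x$ is $\pi(\Epifin(\F_n;H_\rho))$. Minimality forces $K$ to equal this set.

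\emph{Which $G$ actually occur.} Here I would refine the previous step by choosing $x$ well. By Theorem~1.1 of~\cite{Brion:Pacific2015} there is a finite subgroup $F\subset H_\rho$ with $\pi(F)=H_\rho^{\#}$. Since $n\ge \rg(F)$, some $\rho'$ with $\rho'(\F_n)=F$ lies in $\Epifin(\F_n;H_\rho)$. By minimality, $K$ is also the orbit closure of $\pi(\rho')$, which is $\pi(\Epi(\F_n;F))$. So every minimal set is $\pi(\Epifin(\F_n;G))$ with $G$ compact, and in fact we may take $G$ finite, in which case $\Epifin=\Epi$.

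\emph{Converse: these sets are minimal.} Here I expect the main obstacle, namely the precise range of $G$ for which minimality holds. For $G$ finite the argument is clean. By Corollary~\ref{cor:jordan_and_compact}\,(2), applied to $G=G^{\#}$ for large $n$, $\Aut(\F_n)$ acts transitively on $\Epi(\F_n;G)$. Hence $\pi(\Epi(\F_n;G))$ is a single finite orbit, which is compact, invariant and minimal. For $G$ with $G^\circ\neq\{1\}$ I would check carefully against the paper's intended reading. The construction of the previous paragraph exhibits a strictly smaller closed invariant subset $\pi(\Epi(\F_n;F))$. This subset is strictly smaller because conjugation preserves the closure of the image, so points of $\pi(\Epi(\F_n;F))$ cannot give dense representations into $G$. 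Thus minimality of $\pi(\Epifin(\F_n;G))$ holds exactly when $G$ is finite.

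I would therefore write the corollary's proof with $G$ ranging over compact subgroups realized as finite ones, since $\pi(\Epifin(\F_n;G))=\pi(\Epifin(\F_n;F))$ when $G=F$ is finite. Read this way, the two steps above give exactly the claimed characterization. A further point to verify: distinct conjugacy classes of finite subgroups give distinct minimal sets, which follows because conjugation preserves the image group up to conjugacy.
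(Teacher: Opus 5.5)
Your proposal is correct. Its first step is precisely the derivation the paper intends. The corollary is stated there without proof, as a consequence of Theorem~\ref{thm:boundedness_in_charac_var} and Theorem~\ref{thm:main}\,(1). You represent a point of $K$ by a completely reducible $\rho$, and use the relatively compact orbit to get $H=\overline{\rho(\F_n)}$ compact. Continuity of $\pi$ and compactness of $\Epifin(\F_n;H)$ then identify the orbit closure of $\pi(\rho)$ with $\pi(\Epifin(\F_n;H))$, and minimality gives $K=\pi(\Epifin(\F_n;H))$.

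Your refinement goes beyond the paper, and it is also correct. It shows the statement holds only as the inclusion ``every minimal set is of the form $\pi(\Epifin(\F_n;G))$'', or with $G$ restricted to finite subgroups. Indeed, $\pi(\Epifin(\F_n;G))$ is not minimal once $G^\circ\neq\{1\}$; for connected $G$ it already contains the fixed point coming from the trivial representation.

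For strictness of $\pi(\Epi(\F_n;F))\subsetneq\pi(\Epifin(\F_n;G))$ you appeal to conjugacy of fibres of $\pi$. A self-contained argument uses traces instead. Suppose $\rho\in\Epitop(\F_n;G)$ and $\pi(\rho)=\pi(\rho'')$ with $\rho''(\F_n)$ finite. Then $\tr(\rho(w))=\tr(\rho''(w))$ for every $w\in\F_n$, since these are conjugation-invariant regular functions. So $\tr$ takes finitely many values on $G=\overline{\rho(\F_n)}$, hence is identically $m$ on the connected group $G^\circ$. An element of a compact subgroup of $\GL_m(\C)$ with trace $m$ is the identity, so $G^\circ=\{1\}$.

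Consequently, the minimal compact invariant sets are exactly the finite orbits $\pi(\Epi(\F_n;F))$ with $F$ finite. The sets $\pi(\Epifin(\F_n;G))$, for $G$ an arbitrary compact subgroup, are instead exactly the compact invariant sets admitting a dense orbit. These are the analogue of the topologically transitive sets in Theorem~\ref{thm:dynamics_on_character_bounded}, and this is presumably what the paper's formulation meant.
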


If one considers $\C^m$ as a real vector space, one gets an embedding of $\SL_m(\C)$ into $\SL_{2m}(\R)$ and a continuous map $\chi(\F_n; \SL_m(\C))\to \chi(\F_n; \SL_{2m}(\R))$. This can be used to prove the above statements for $\SL_m(\C)$ in place of $\SL_m(\R)$.  

In particular, if $n$ is large ($n\geq  Z(2m)$) and if $\rho\colon \F_n\to \SL_m(\C)$ has a finite orbit in the character variety, then the image of its semisimplification is finite.

\begin{proof}[Proof of Theorem~\ref{thm:boundedness_in_charac_var}] 
         Let
	$ G $ be the (real) Zariski closure of
	$ \rho (\F_n)$ in $\SL_m(\R)$.

	\vspace{0.1cm}

{\textit{Step 1.--}} First, we assume that $\rho$ is irreducible.	
	If $G$ is compact, it is conjugate to a subgroup of the maximal compact subgroup $\SO_m(\R)$, 
	and we are done. So, arguing by contradiction, we 	suppose that $ G $ is not compact.
	Denote by $ r $ the \emph{proximal rank} of $ G $
	(see \cite[Section 4.1]{benoistRandomWalksReductive2016}).
	By definition, this is the smallest integer $ \ge 1 $
	for which there exist elements $ g_k \in G $, 
	scalars $ \lambda_k \in \R$, and a non-zero endomorphism $f$ of rank $ r $ such that
	$ \lambda_k g_k $ converges towards~$ f $.
	As $ G $ is not compact,
	the rank is $ \leq m-1 $
	and $ f $ is not invertible.
	
	By Theorem \ref{thm: rdu_alg},
	$\rho$ is redundant with respect to the Zariski topology.
	We can suppose that
	$ \rho(F_{n-1}) = \left< \rho(a_1), \dots, \rho(a_{n-1})\right> $
	is
	Zariski dense in
	$ G $.
	By
	\cite[Lemma 6.23]{benoistRandomWalksReductive2016}
	the proximal rank of $ \rho(\F_{n-1}) $ is also
	equal to $ r $.
	Thus, there is a sequence
	$ w_k \in \F_{n-1}  $
	and scalars $ \lambda_k $ such that
	$ \lambda_k \rho(w_k) $ converges to some  
	endomorphism
	$ f $ of rank $ r $.
	As the determinant of
	$ \rho(w_k) $ is $1$
	and $ f $ is not invertible,
	$ \lambda_{k} $
	converges towards $0$.
	By irreducibility
	of $ \rho(F_{n-1}) $,
	up to replacing $ f $
	by $ \rho(w) f $
	for some $ w \in F_{n-1} $,
	we can assume that
	$ f $ is not nilpotent,
	for otherwise,
	we would have
	$ \rho(w) \mathrm{Im} f \subset \mathrm{Ker} f $
	for every $ w \in F_{n-1} $,
	which would contradict the irreducibility of
	$ \rho(F_{n-1}) $.

	Let $ h = \rho(a_n) $.
	Take $ w_1$, $w_2 \in \F_{n-1} $,
	to be determined later.
	There exists a sequence
		$ \varphi_{k} \in \Aut(\F_n) $
	such that
	$ (\varphi_{k})_{*} \rho(a_{n})
	= \rho(w_1 a_n w_2 w_k) $.
	Since
	\begin{align*}
		\lambda_k
			(\varphi_{k})_{*}  \rho(a_{n})
		=
		\lambda_k
			\rho(w_1 a_n w_2 w_{k})
		=
			\rho(w_1)
			h
			\rho(w_2)
			\lambda_k
			\rho(w_k)
			\end{align*}
we obtain 
\begin{align*}
		\lim_{k\to +\infty} \lambda_k
			(\varphi_{k})_{*}  \rho(a_{n})
&=			\rho(w_1)
			h
			\rho(w_2)
			f.
	\end{align*}
	Taking traces, we see that if 
		$ \tr
	\left( 
		\rho(w_1)
		h
		\rho(w_2)
		f
	\right) \neq 0 $,
	then
	$ \tr 
	\left( 
		(\varphi_{k})_{*}  \rho(a_{n})
	\right) $
	goes to $ +\infty $
	with $ k $.
	As the function
	$ \rho \mapsto \tr ( \rho(a_n) ) $
	induces a continuous function
	on the character variety,
	this would imply that the orbit of
	$ \pi(\rho) $
	is not relatively compact in
	$ \chi(\F_n; \SL_m(\R)) $.

	Let us now show that we can find
	$ w_1$ and $ w_2 \in \F_{n-1} $
	such that
	\[ \tr
	\left( 
		\rho(w_1)
		h
		\rho(w_2)
		f
	\right) \neq 0.
	\]
	Indeed, otherwise 
	$ \tr
	\left( 
		g_1
		h
		g_2
		f
	\right) = 0$
	for every
	$ g_1$,  $g_2 \in \rho(\F_{n-1}) $.
	As this is a polynomial equation
	in $ g_1 $ and $ g_2 $,
	and  
	$ \rho(\F_{n-1}) $
	is Zariski-dense in $ G $,
	it is  satisfied
	by all 	$ g_1$, $g_2 \in G $. 
	Choosing   $ g_1 = h^{-1} $, we obtain
	$ \tr
	\left( 
		g
		f
	\right) = 0$
	for every $g\in G$.
	Choosing $ g = \rho(w_k)^{l-1} $ with $l\in \Z$,
	this gives
	$ \tr
	\left( 
		( \lambda_k \rho(w_k))^{l-1} 
		f
	\right) = 0$.
	When $ k $ goes to infinity,
	we obtain
	$ \tr
	\left( 
		f^{l} 
	\right) = 0$,
	for every $l\in \Z$.
	This implies that
	$ f $ is nilpotent,
	a contradiction.

	\vspace{0.1cm}
{\textit{Step 2.--}} Now, we do not assume anymore that  $\rho$ is irreducible.
Consider the semisimplification  $ \rho_{0} $ of $ \rho $.
Let $\R^m=\oplus_i V_i$ be a decomposition of $\R^m$ in irreducible representations of $\rho_0$. 
By assumption, $\pi(\rho_0)$ has a bounded orbit. This means that the regular functions on $\Rep(\F_n; \SL_m(\R))$ which are invariant under conjugacy are bounded along $\Aut(\F_n)(\rho_0)$. These functions include all symmetric functions in the eigenvalues of $\rho(w)$, for  any $w\in \F_n$; and there is a finite list of words $(w_j)$, with $w_i=a_i$ for $i \leq n$, such that  the functions $\rho\mapsto \tr(\rho(w_j))$ generate the algebra of regular function on $\chi(\F_n; \SL_m(\R))$.
Moreover,  a subset of~$\C^m$ is bounded if and only if  its image under the elementary symmetric functions is also bounded.
From these remarks, we see that the boundedness of $\Out(\F_n)(\pi(\rho_0))$  is equivalent to the following: all eigenvalues of all elements $\rho(\varphi(w_j))$ have modulus $\leq D$, for all $\varphi\in \Aut(\F_n)$ and some $D>0$.
As a consequence, for each $i$, the point in $\chi(\F_n;\SL(V_i))$ determined by the restriction $(\rho_0)_{\vert V_i}$ has a bounded orbit under the action of $\Out(\F_n)$. By the first step, the restriction of $\rho_0(\F_n)$ to each $V_i$ is relatively compact, and $\rho_0(\F_n)$ itself is relatively compact,
hence takes values in a compact subgroup.
\end{proof}

\subsection{Constraints on primitive elements}

Let $ P_n \subset \F_n $ be the set of primitive elements, where $a\in \F_n$ is {\textit{primitive}} if it is an element of a free basis of $\F_n$.

Let $ d(m) < m^2 $ be the smallest integer such that every Zariski closed subgroup
of $ \SL_m(\C) $ can be topologically generated by
$ d(m) $ elements.

\begin{lem}
\label{lem:primitive_dense}
Let $ \rho$ be a homomorphism from $\F_n$ to $\SL_m(\C) $
and let $ G $ be the Zariski closure of  $ \rho (\F_n)$.
If $n\geq N(m)+d(m)-1$, then $ \rho(P_n) $ is Zariski dense
in $ G $.
\end{lem}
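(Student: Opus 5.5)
The plan is to use Zariski redundancy (Theorem~\ref{thm: rdu_alg}) to free up $d(m)$ generators, fill these slots with Zariski generators of $G$, and then exhibit all words in them as images of primitive elements.

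First, since $n\geq N(m)+d(m)-1$ (with $N$ the constant relevant in the Zariski setting, i.e.\ $Z(m)$), Theorem~\ref{thm: rdu_alg} and the analogue of Remark~\ref{rem:redundant_vs_k-redundant} show that $\rho$ is $d(m)$-redundant for the Zariski topology. Since $\rho(P_n)$ is invariant under precomposition by $\Aut(\F_n)$ (as $P_n$ is $\Aut(\F_n)$-invariant), we may apply a Nielsen move. Then, with $k=n-d(m)$, the tuple $(\rho(a_1),\ldots,\rho(a_k))$ generates a Zariski dense subgroup of $G$.

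Next, fix $x_1,\ldots,x_{d(m)}\in G$ generating a Zariski dense subgroup of $G$; this is possible by the definition of $d(m)$. The key point is to reach these elements in the free slots:
\begin{enumerate}[\rm (i)]
\item By the Zariski analogue of fact~(b) before Lemma~\ref{lem:red_implies_dense}, for each $j>k$ the values $\rho(a_j w)$, with $w$ ranging over words in $a_1,\ldots,a_k$, form a Zariski dense subset of $\rho(a_j)G=G$.
\item Each $a_jw$ is primitive, since $(a_1,\ldots,a_k,\ldots,a_jw,\ldots)$ is still a basis.
\end{enumerate}
So already $\rho(P_n)$ is Zariski dense in $G$: its Zariski closure contains the closure of $\{\rho(a_n w)\}$, which is $G$.

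The direct argument above is what I would check carefully, namely that $\{\rho(a_n)\rho(w) : w\in\langle a_1,\ldots,a_k\rangle\}$ has Zariski closure $\rho(a_n)\,\overline{\rho(\langle a_1,\ldots,a_k\rangle)}^{Zar}=G$. Left translation is a Zariski homeomorphism, so this holds. If it holds, the extra $d(m)-1$ slots are only needed in the authors' possibly different route; plain $1$-redundancy would then suffice, which is a reason for caution. The main obstacle is being precise about which redundancy constant governs the Zariski setting; I would take the threshold to be the one from Theorem~\ref{thm: rdu_alg}. A secondary point is to check that primitivity is preserved under the Nielsen moves used, which is immediate.
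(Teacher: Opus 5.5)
Your argument is correct, and it takes a different and shorter route than the paper. The paper uses $d(m)$-redundancy together with Lemma~\ref{lem:red_implies_dense} to show that the whole orbit $\Aut(\F_n)\rho$ is Zariski dense in $\Red^{d}(\F_n;G)\cap\Epi(\F_n;G)$. It then projects onto the first coordinate, using $\rho(P_n)=\pi_1(\Aut(\F_n)\rho)$. That is where the extra $d(m)-1$ slots enter, and with them the constant $d(m)$, a uniform bound on the number of Zariski generators.

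You only need $1$-redundancy. Once $\Gamma=\rho(\langle a_1,\ldots,a_{n-1}\rangle)$ is Zariski dense in $G$, every element $a_nw$ with $w\in\langle a_1,\ldots,a_{n-1}\rangle$ is primitive. Left multiplication by $\rho(a_n)$ is an automorphism of the variety $\SL_m(\C)$, so the Zariski closure of $\rho(a_n)\Gamma$ is $\rho(a_n)G=G$.

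So your caution at the end is unwarranted: the argument holds. It gives the sharper threshold $n\geq Z(m)$ and dispenses with both $d(m)$ and the minimality lemma. The step where you fix $x_1,\ldots,x_{d(m)}$ is never used and should be deleted. What the paper's route buys is information about the entire orbit, whose closure contains $\Red^{d}(\F_n;G)\cap\Epi(\F_n;G)$. That is more than this lemma asks for, but it is not a better version of the lemma.

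You are also right that the threshold must come from Theorem~\ref{thm: rdu_alg}, i.e.\ from $Z(m)$ rather than the compact constant $N(m)=2mN_\#(m)$, which is strictly smaller than $Z(m)$. The paper's own proof needs the same reading.
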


\begin{proof}  By
Theorem~\ref{thm: rdu_alg},
$ \rho $ is $ d(m) $-redundant for the Zariski topology.
By Lemma~\ref{lem:red_implies_dense},
the orbit of $ \rho $ is Zariski dense
in 
$\Redtop^{d} (\F_n; G) \cap \Epitop(\F_n; G)$.
Now,
$ \rho(P_n) = \pi_1( \Aut(\F_n)\rho) $,
where $ \pi_1$ is the projection on the first coordinate,
so
the closure of $ \rho(P_n) $
contains
$ \pi_1(
\Redtop^{d} (\F_n; G) \cap \Epitop(\F_n; G)) = G$.
\end{proof}

The following statement is precisely Conjecture 7.1 (or equivalently 7.3) of~\cite{Gelander:JournalAlgebra2024} {\textit{when $n$ is sufficiently large}}. 

\begin{thm}
Let ${\mathrm{Uni}}(m)\subset \GL_m(\C)$ be the algebraic subvariety of unipotent elements. 
Suppose $n\geq N(m)+d(m)-1$. 
If $ \rho \in \Rep(\F_n; \GL_m(\C))$ maps
every primitive element to a unipotent element, {\sl{i.e.}}\ $\rho(P_n)\subset {\mathrm{Uni}}(m)$,
then  $\rho(\F_n)$ is conjugate to a group of unipotent upper triangular matrices. 
\end{thm}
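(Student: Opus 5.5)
The plan is to combine Lemma~\ref{lem:primitive_dense} with the fact that ${\mathrm{Uni}}(m)$ is Zariski closed. Let $G$ be the Zariski closure of $\rho(\F_n)$ in $\GL_m(\C)$. Lemma~\ref{lem:primitive_dense} is stated for $\SL_m(\C)$, but its proof only uses Theorem~\ref{thm: rdu_alg} and Lemma~\ref{lem:red_implies_dense}, both valid in $\GL_m(\C)$. The one adjustment is that the bound $d(m)$ on the number of topological generators must also cover Zariski closed subgroups of $\GL_m(\C)$. Alternatively, we can first observe that every primitive element has unipotent image, hence determinant $1$. Since the generators $a_i$ are primitive, this gives $\rho(\F_n)\subset \SL_m(\C)$, and Lemma~\ref{lem:primitive_dense} applies directly with $n\geq N(m)+d(m)-1$. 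In either version we obtain that $\rho(P_n)$ is Zariski dense in $G$.

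Since $\rho(P_n)\subset {\mathrm{Uni}}(m)$ and ${\mathrm{Uni}}(m)=\{g \,;\, (g-\Id)^m=0\}$ is Zariski closed, we deduce $G\subset {\mathrm{Uni}}(m)$. Thus $G$ is an algebraic subgroup of $\GL_m(\C)$ consisting entirely of unipotent elements.

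It remains to apply Kolchin's theorem. A subgroup of $\GL_m(\C)$ all of whose elements are unipotent fixes a nonzero vector. This can be seen via Burnside's theorem: on an irreducible subquotient, $\tr(g)=\dim$ for all $g$, which forces trivial action. Inducting on flags, the group is conjugate into the upper triangular unipotent matrices. Applied to $G$, and hence to $\rho(\F_n)\subset G$, this gives the conclusion.

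The only delicate point is the first step. We must check that Lemma~\ref{lem:primitive_dense} really gives density of $\rho(P_n)$ in the full Zariski closure $G$, which is where the hypothesis $n\geq N(m)+d(m)-1$ is used, through $d(m)$-redundancy. We must also ensure the reduction from $\GL_m$ to $\SL_m$ is legitimate; the determinant argument above handles this cleanly. Once density is in hand, the rest is classical.
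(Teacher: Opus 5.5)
Your proof is correct and follows the paper's argument. Lemma~\ref{lem:primitive_dense} gives Zariski density of $\rho(P_n)$ in the Zariski closure of $\rho(\F_n)$; the fact that ${\mathrm{Uni}}(m)$ is Zariski closed then makes that whole closure unipotent, and Kolchin's theorem concludes. Your determinant remark (each $\rho(a_i)$ is unipotent, so $\rho$ takes values in $\SL_m(\C)$) also justifies applying the lemma, which is stated for $\SL_m(\C)$, a step the paper leaves implicit.
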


Indeed, Lemma~\ref{lem:primitive_dense} implies that every element of $\rho(\F_n)$ is unipotent. 
The theorem follows, because a subgroup of $\GL_m(\C)$
whose elements are all unipotent is conjugate to a group of upper triangular matrices. 

\section{Redundancy for surface groups}

Let $ S_{g} $ be a closed surface of genus $ g $, and let $\pi_1(S_g)$ denote its fundamental group. 
The group $\Aut(\pi_1(S_g))$ acts on $\Rep(\pi_1(S_g); G)$ for any group $G$, and one might wonder whether an analogue of Theorem~\ref{thm:main} holds in this context. We don't know yet how to obtain such a statement. 
But we provide an analogue of Theorem~\ref{THMB}.

Following Dunfield-Thurston,
we say that
$ \rho : \pi_1(S_g) \to G $
is a \emph{stabilization} if
$ S_{g} $ can be written as a connected sum
$ S_{g} = S_{g_1} \# S_{g_2} $
and
if there exists a representation
$ \rho' : \pi_1(S_{g_1} ) \to G $
such that $ \rho $ is equal to
$ \rho' $
on
$ S_{g_1} $
and the trivial representation on
$ S_{g_2} $ (in particular, $\rho(\gamma)=1_G$ on the curve along which the sum $S_{g_1} \# S_{g_2} $ is done). The following is Proposition 6.16 of~\cite{Dunfield-Thurston}.

\begin{thm}
\label{thm:dunfield-thurston}
	Let $ G $ be a finite group.
	If $ g > \card (G) $
	then every homomorphism
	$ \rho : \pi_1(S_g) \to G $
	is a stabilization.
\end{thm}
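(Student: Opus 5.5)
The plan is to imitate, for surface groups, the compressibility argument of Theorem~\ref{thm:lubotzky}\,(1). Use the standard presentation
\[
\pi_1(S_g)=\langle a_1,b_1,\ldots,a_g,b_g \mid [a_1,b_1]\cdots[a_g,b_g]\rangle ,
\]
so that $\rho$ is a tuple $(x_1,y_1,\ldots,x_g,y_g)\in G^{2g}$ with $\prod_i [x_i,y_i]=1$. The mapping class group acts on such tuples through ``surface Nielsen moves'', i.e.\ Dehn twists. A stabilization means that, after such a move, some handle pair $(x_j,y_j)$ is $(1,1)$ and the splitting curve is mapped to $1$. The goal is therefore to show that when $g>\card(G)$ some mapping class kills a handle.

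\textbf{Step 1: pigeonhole on partial products.} Set $p_0=1$ and $p_k=\prod_{i\le k}[x_i,y_i]$ for $0\le k\le g$. Since $g>\card(G)$, we get $g+1>\card(G)+1$ values, so two of them coincide: $p_r=p_s$ with $r<s$. Then the curve separating handles $r+1,\dots,s$ from the rest maps to $1$. This splits $S_g=S'\# S''$, where $S''$ has genus $s-r$ and bounds a subsurface on which $\rho$ restricts to a representation of the closed surface group $\pi_1(S_{s-r})$. We still need $\rho$ to be trivial on one side, so more is required.

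\textbf{Step 2: a handle whose entries lie in the group generated by the others.} I would count more carefully. Consider the chain of subgroups $H_k=\langle x_1,y_1,\ldots,x_k,y_k\rangle$. It can strictly increase at most $\lc(G)\le\log_2\card(G)$ times. Hence, for $g>\card(G)$, there are many indices $k$ with $x_{k},y_{k}\in H_{k-1}$; in fact after reordering handles (braid-type moves between handles, which are available in the mapping class group), all but $\lc(G)$ handles have entries in the group generated by a fixed set of ``essential'' handles. For such a handle, I would use the surface analogue of Remark~\ref{Delta}. Handle-slides along curves in the essential part multiply $x_k$ and $y_k$ by elements of the form $w$, for $w$ a word in the essential handles, up to conjugation by elements that the moves allow us to control. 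Using these slides, I aim to reduce $x_k$ to $1$; then a twist along $x_k$ makes $y_k$ adjustable arbitrarily within the group generated, so it can be reduced to $1$ as well. This step requires the boundary word of the handle, $[x_k,y_k]$, to be compatible, which is where the partial-product pigeonhole from Step~1 is combined in.

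\textbf{Main obstacle.} The hard step is Step~2. Handle-slides do not multiply $x_k$ freely by arbitrary elements of $H_{k-1}$; they act by $x_k\mapsto x_k\,\gamma$ only for $\gamma$ represented by curves disjoint from the handle, and they simultaneously conjugate other generators because of the relation. If direct handle-sliding is too messy, I would fall back on Dunfield--Thurston's approach. Their idea is to view $\rho$ as a $G$-cover and use that, among more than $\card(G)$ handles, the images in $H_1(S_g)$ under the relevant local data repeat. One then finds a genus-one subsurface with one boundary component on which $\rho$ becomes trivial after a mapping class; this reduces to a counting of orbits of $\mathrm{Mod}(S_{1,1})$ on pairs in $G^2$ with prescribed commutator.
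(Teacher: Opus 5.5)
Your proposal does not yet contain a proof: nothing in it produces the one-holed torus on which $\rho$ is trivial. Step~1 only gives a \emph{separating} curve in $\ker\rho$. As you note yourself, that splits $\rho$ into two closed-surface representations and brings you no closer to a trivial handle. Step~2 has to carry the whole argument, but it is asserted rather than proved, and its last move is wrong. The twist $T_{a_k}$ acts on the handle by $(x_k,y_k)\mapsto(x_k,y_kx_k^{\pm1})$, so once $x_k=1$ it does not change $y_k$ at all. The fallback is also misdirected. $\mathrm{Mod}(S_{1,1})$ acts on pairs $(x,y)$ by Nielsen moves, so it preserves $\langle x,y\rangle$ and never sends a pair generating a nontrivial group to $(1,1)$; a handle can only be killed using curves that leave it. The paper quotes this statement from Dunfield--Thurston without proof, so your argument has to stand on its own.

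Two ingredients are missing.

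\emph{The right pigeonhole.} Apply it to $1,\rho(a_1),\rho(a_1a_2),\ldots,\rho(a_1\cdots a_g)$, not to partial products of commutators. This gives $u<v$ with $\rho(a_{u+1}\cdots a_v)=1$. For the standard system of loops, $a_{u+1}\cdots a_v$ is represented by a \emph{nonseparating} simple closed curve $\alpha$.

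\emph{A genuine surface analogue of Remark~\ref{Delta}.} It concerns the curves dual to $\alpha$ and needs $\rho(\alpha)=1$. Since both boundary copies of $\alpha$ map to $1$, the restriction of $\rho$ to $S_g\setminus\alpha$ factors through the closed genus $g-1$ surface $\hat S$ obtained by capping them. In a surface, every homotopy class of paths between two distinct points contains a simple arc (by point-pushing). Hence, with basepoints on $\alpha$, the values $\rho(\beta)$ for $\beta$ a simple closed curve meeting $\alpha$ once fill the whole coset $H\rho(b)$. Here $H=\rho(\pi_1(S_g\setminus\alpha))$ and $b$ is any one such curve.

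\emph{The induction.} If $H=\Gamma:=\rho(\pi_1(S_g))$, you get some $\beta$ with $\rho(\beta)=1$, and the handle $N(\alpha\cup\beta)$ is trivial. Otherwise $H$ has index $\ge2$. This really happens: for $\rho\colon\pi_1(S_g)\to\Z/2\Z$ given by mod~$2$ intersection with $\alpha$, every curve crossing $\alpha$ once maps to $1$. In that case you induct on $(\hat S,\hat\rho)$, which has genus $g-1$ and image $H$, and push the torus found there off the two capping disks. Inducting on $|\Gamma|$ (the case $\Gamma=\{1\}$ is trivial), this closes as soon as $g\ge|\Gamma|$, because $g-1\ge|\Gamma|-1\ge|\Gamma|/2\ge|H|$ when $|\Gamma|\ge2$. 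In particular $g>\card(G)$ suffices.
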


We establish the following generalization.

\begin{thm}
\label{thm:dunfield-thurston-uniform}
	Let $ G $ be a finite group
	of Jordan size at most
	$ (J,R) $.
	If $ g \ge J + R + 1 $
	then every
	$ \rho : \pi_1(S_g) \to G $
	is a stabilization.
\end{thm}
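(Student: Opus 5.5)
The plan is to follow the strategy of Dunfield and Thurston's proof of Theorem~\ref{thm:dunfield-thurston}, but to run it in two layers, along the exact sequence $0\to A\to G\to Q\to 1$ with $A$ abelian of rank $\leq R$ and $\card(Q)\leq J$. Write $\pi_1(S_g)=\langle x_1,y_1,\ldots,x_g,y_g \mid \prod_i [x_i,y_i]\rangle$ and identify $\rho$ with the tuple $(\rho(x_1),\rho(y_1),\ldots)$. The mapping class group acts on such tuples by moves that play the role of Nielsen moves. These are:
\begin{itemize}
\item the $\SL_2(\Z)$-moves inside a handle, for instance $(x_i,y_i)\mapsto(x_i,y_ix_i)$;
\item the handle slides, which multiply the pair $(x_i,y_i)$ by elements coming from another handle;
\item the braid-like moves permuting handles, which conjugate by partial products of commutators.
\end{itemize}
Being a stabilization is invariant under these moves. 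Moreover, it suffices to produce a single handle $(x_i,y_i)$ with $\rho(x_i)=\rho(y_i)=1$, after permuting it to the end; the separating curve is then sent to $1$ automatically.

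\emph{Step 1 (the quotient layer).} Project to $Q$ and look at the prefix products $c_k=\prod_{i\leq k}[\bar x_i,\bar y_i]\in Q$, for $0\leq k\leq g$. I intend to imitate the pigeonhole argument of Dunfield--Thurston in $Q$ rather than in $G$, using only the bound $\card(Q)\leq J$. The goal is to find at most $J$ handles, together with a sequence of moves, such that afterwards every remaining handle has $\bar x_i=\bar y_i=1$ in $Q$. Equivalently, the last $g-J\geq R+1$ handles should take values in $A$.

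Concretely, I would proceed greedily. Suppose the subgroup $Q'$ of $Q$ generated by the images of the first $k$ handles does not contain the image of handle $k+1$. Then $Q'$ grows strictly. Since a strictly increasing chain of subgroups of $Q$ has length at most $\log_2 J\leq J$, this happens fewer than $J$ times. Otherwise the image of handle $k+1$ lies in $Q'$, and handle slides by words in the earlier handles should kill it in $Q$. This is the analogue of Remark~\ref{Delta}, and it is the point where the surface relation must be handled carefully. Each slide has to be realized by a genuine mapping class, which multiplies $x_{k+1}$ and $y_{k+1}$ by elements of the subgroup generated by the earlier handles, while keeping track of the conjugations by the $c_k$.

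\emph{Step 2 (the abelian layer).} After Step 1, the last $r:=g-J\geq R+1$ handles are sent into $A$. There all commutators are trivial, so the relevant mapping classes act on $A^{2r}$ through $\Sp_{2r}(\Z)$, acting by integral symplectic substitutions. Since $A$ has rank at most $R$, I would use symplectic transvections to perform a symplectic analogue of the argument of Corollary~\ref{cor:lubotzky}\,(2), inducting on the chain length of the subgroup generated by these entries, or equivalently a symplectic Smith normal form over $\Z$. This should concentrate the generated subgroup $B\subset A$, which has rank at most $R$, into at most $R$ handles, leaving one handle equal to $(1,1)$. The expected count is that $2r\geq 2R+2$ entries suffice. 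Symplectic transvections act transitively on primitive vectors, so each generator of $B$ can be placed in an $x$-slot of its own handle, and the dual $y$-slots can then be cleared. That handle gives the stabilization, with $g_1=g-1$.

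\emph{Main obstacle.} I expect Step 1 to be the hard part. The moves of the mapping class group are not arbitrary Nielsen moves: a handle slide by an element of the subgroup $\langle$earlier handles$\rangle$ is only available for words realizable by simple closed curves, and it conjugates the other handles. I would first try to reproduce Dunfield--Thurston's key lemma in a relative form. That lemma says a handle whose image lies in the group generated by the other handles can be traded, by mapping classes, for a handle with image in any prescribed coset. I would apply it to the projection to $Q$, and then check that the resulting error terms lie in $A$ and are absorbed in Step 2. This interface between the two layers is where the hypothesis $g\geq J+R+1$ has to be spent exactly: $J$ handles for Step 1 and $R+1$ handles for Step 2.
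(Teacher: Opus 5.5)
\paragraph{The gap is in Step~1.} Step~1 is where the proof actually lives, and the greedy mechanism you propose for it cannot work. Your step ``if the image of handle $k+1$ in $Q$ lies in the subgroup $Q'$ generated by the earlier handles, then handle slides kill it'' is the surface analogue of Remark~\ref{Delta}, and it is false. Mapping classes preserve an extra invariant, the image of the fundamental class in $H_2(Q;\Z)$.

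Here is a concrete counterexample. Take $Q=(\Z/2\Z)^2$, $g=2$ and $(\bar x_1,\bar y_1,\bar x_2,\bar y_2)=(e_1,e_2,e_1,e_2)$. The mapping class group acts through $\Sp_4(\Z)$ on the two coordinate columns $c_1=(1,0,1,0)$ and $c_2=(0,1,0,1)$ in $(\Z/2\Z)^4$, so the symplectic pairing $\omega(c_1,c_2)\bmod 2$ is invariant. It equals $0$ here. But it equals $\det(r_1,r_2)=1$ for every tuple $(r_1,r_2,0,0)$ with $r_1,r_2$ generating $Q$. Hence the second handle, whose image lies in $Q'=Q$, can never be made trivial. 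So your chain-length count (at most $\log_2 J$ enlargements, every other handle killed) is not available. Your fallback, a ``relative form'' of the Dunfield--Thurston key lemma whose error terms are absorbed in Step~2, is a hope rather than an argument.

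\paragraph{The missing idea.} Nothing needs to be re-proved: apply Theorem~\ref{thm:dunfield-thurston} as a black box to the composite $\bar\rho\colon\pi_1(S_g)\to Q$.
\begin{enumerate}
\item Since $\card(Q)\le J<g$, you get $S_g=S_{g-1}\# T$ with $\bar\rho$ trivial on $\pi_1(T^*)$. That is, $\rho(\pi_1(T^*))\subset A$.
\item Because $A$ is abelian, the attaching curve (a commutator of the two generators of $\pi_1(T^*)$) is sent to $1$ by $\rho$ itself, not only by $\bar\rho$. So $\rho$ descends to $\pi_1(S_{g-1})\to G$, and you can repeat.
\item You can repeat $R+1$ times as long as the genus stays $>J$, i.e.\ $g-R\ge J+1$. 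This is exactly the hypothesis.
\end{enumerate}
This yields $S_g=S_{g-R-1}\# S_{R+1}$ with $\rho$ valued in $A$ on the second summand, which is precisely the output you wanted from Step~1.

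\paragraph{Step~2.} Your Step~2 then goes through essentially as in the paper. The cleanest version treats $A=A_1\oplus\cdots\oplus A_r$ one cyclic factor, i.e.\ one column, at a time. Use transitivity of $\Sp_{2g}(\Z)$ on primitive vectors to push the current column into a single handle. Then recurse on the remaining handles with block matrices $\mathrm{diag}(\Id_2,M)$. After $r\le R$ rounds the last handle is trivial. There is no need to clear any $y$-slots, and arguing through generators of $B$ placed in $x$-slots is less clean than working column by column.
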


\begin{cor}
\label{cor:dunfield-thurston-uniform}
For every  integer $m \geq 1$, there exist a positive integer $T(m)$ such that 
$ \Rep (\pi_1(S_g) ; \UU_m(\C)) \subset \Red (\pi_1(S_g) ; \UU_m(\C)) $ for all  $g\geq T(m)$.
\end{cor}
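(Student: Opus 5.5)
The plan is to copy the recursion used for Theorem~\ref{thm: rdu}, with Theorem~\ref{thm:dunfield-thurston-uniform} replacing Corollary~\ref{cor:jordan_and_compact} in the finite step, and with subsurfaces replacing free factors. The statement being inducted on has to be about surfaces with one boundary component, so I first set up that framework.

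Write $\Sigma_{h,1}$ for a surface of genus $h$ with one boundary component; its fundamental group is free of rank $2h$. For $S_g$, removing a disk gives $\Sigma_{g,1}\subset S_g$, and $\pi_1(\Sigma_{g,1})\to\pi_1(S_g)$ is onto. Call $\rho\colon\pi_1(\Sigma_{h,1})\to G$ \emph{redundant} if there is an embedded $\Sigma_{h',1}\subset\Sigma_{h,1}$ with $h'<h$ such that $\rho(\pi_1(\Sigma_{h',1}))$ and $\rho(\pi_1(\Sigma_{h,1}))$ have the same closure. The complement of $\Sigma_{h',1}$ then contains a handle, which is exactly what redundancy for $\pi_1(S_g)$ should mean. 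The analogue of Lemma~\ref{lem:redundancy_from_partial_redundancy} is immediate. Suppose $\Sigma'\subset\Sigma$ is a one-holed subsurface on which $\rho$ is redundant, via $\Sigma''\subset\Sigma'$. Then $\Sigma''$ together with the handles of $\Sigma\setminus\Sigma'$ is a proper one-holed subsurface of $\Sigma$. Its image closure contains that of $\Sigma'$ and the images of the remaining handles, hence the whole image closure. The proof is the same inclusion argument as before.

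The base case concerns the finite quotient. Embed $G=\overline{\rho(\pi_1(S_g))}$ in $\UU_m(\C)$. By Theorem~\ref{thm:jordan_general_version}, $G^{\#}$ has Jordan size at most $(J(m),m)$. If $g\ge J(m)+m+1$, Theorem~\ref{thm:dunfield-thurston-uniform} shows that $\rho^{\#}$ is a stabilization. Iterating the theorem on the piece where $\rho^\#$ is not yet known to be trivial, I can arrange $S_g=S_{g_1}\# S_{g_2}$ with $g_1\le J(m)+m$, with $\rho^{\#}$ trivial on the $S_{g_2}$ side, and with $\rho^\#(\pi_1(\Sigma_{g_1,1}))=G^{\#}$. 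So $\rho$ maps $\pi_1(\Sigma_{g_2,1})$ into $G^\circ$, where $\Sigma_{g_2,1}$ is the $S_{g_2}$ side cut open. Put $M(m,0)=J(m)+m+1$; this covers the case $\ell(G)=0$. When $G$ is finite, the one-holed statement is obtained by running the same argument on $\Sigma_{h,1}$, possibly after capping off if needed.

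For the induction step, define $M(m,\ell+1)=M(m,0)+\max_{j\le\ell}M(m,j)$. Then $T(m)=M(m,2m-1)$, about $2m(J(m)+m+1)$, works by Theorem~\ref{thm:longest_chain_in_Um}. Let $H=\overline{\rho(\pi_1(\Sigma_{g_2,1}))}\subset G^\circ$. If $\ell(H)<\ell(G)$, then $g_2\ge\max_{j\le\ell}M(m,j)$, so the induction hypothesis makes $\rho|_{\Sigma_{g_2,1}}$ redundant, and the subsurface lemma makes $\rho$ redundant. If $\ell(H)=\ell(G)$, then $H=G^\circ$. Choose $\Sigma_{g_2-1,1}\subset\Sigma_{g_2,1}$ and let $K$ be the closure of its image. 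If $K=H$, then $\rho$ is redundant: the subsurface $\Sigma_{g_1,1}\cup\Sigma_{g_2-1,1}$, joined along a band, already has image closure $G$. Otherwise $\ell(K)<\ell(G)$, the induction hypothesis applies to $\Sigma_{g_2-1,1}$, and I conclude with the subsurface lemma.

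The main obstacle is making the base step produce a genuine subsurface decomposition with $g_1$ \emph{bounded} and with the induction statement phrased for one-holed surfaces. Theorem~\ref{thm:dunfield-thurston-uniform} is stated for closed surfaces, and the recursion needs it on $\Sigma_{h,1}$. For that, I would first try to show that triviality on the boundary curve can always be arranged after a mapping class move, using the handle-sliding moves that underlie Dunfield--Thurston's argument. This is where I expect to spend the effort.
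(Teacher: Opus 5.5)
Your architecture matches what the paper intends. The paper only says that the recursion of Theorem~\ref{thm: rdu} transfers, with Theorem~\ref{thm:dunfield-thurston-uniform} as the one input that uses moves non-trivially, to handle $G^{\#}$. You are right that the recursion actually runs on one-holed pieces. So the finite input is needed for $\rho\colon\pi_1(\Sigma_{h,1})\to F$, with $F$ finite of Jordan size at most $(J(m),m)$ and \emph{arbitrary} boundary value. It is needed twice: for your base case, and at the start of every induction step, where you must split off a bounded-genus one-holed piece surjecting onto $H^{\#}$. Your proposal leaves exactly this step open, and the route you suggest cannot close it. Every mapping class of $\Sigma_{h,1}$ fixes $\partial\Sigma_{h,1}$, so $\rho(\partial\Sigma_{h,1})$ and its image in $H^{\#}$ are invariants of the orbit. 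No handle slide makes them trivial, and capping off is then impossible. These values are non-trivial in the situations your recursion produces. The boundary of your piece $\Sigma_{g_2,1}$ is the splitting curve $\gamma$, and $\rho(\gamma)$ is in general a non-trivial element of $H$. This happens even when $H$ is a finite subgroup of $G^\circ$, for instance dihedral, whose chain length $\lc(H)$ is not bounded in terms of $m$.

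Rather than trying to kill the boundary, change the subsurface. Let $0\to A\to F\to Q\to 1$ with $A$ abelian of rank $\leq R$ and $\card(Q)\leq J$, and let $\bar\rho$ be the induced map to $Q$.
\begin{enumerate}[\rm (1)]
\item Consider the nested curves $c_i=\prod_{k\leq i}[a_k,b_k]$, $0\leq i\leq h$. Pigeonholing their images in $Q$ gives $i<j$ with $\bar\rho(c_i)=\bar\rho(c_j)$ and $j-i\geq (h+1)/J-1$.
\item The handles $i+1,\dots,j$ span a one-holed subsurface $B\subset\Sigma_{h,1}$ with boundary $c_i^{-1}c_j$, which $\bar\rho$ kills. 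So $\bar\rho_{\vert B}$ factors through the capped closed surface $\hat B$.
\item Run Step~1 of the proof of Theorem~\ref{thm:dunfield-thurston-uniform} $R+1$ times in $\hat B$, isotoping each splitting curve off the capping disc. This yields a one-holed genus-$(R+1)$ subsurface $C\subset B$ with $\rho(\pi_1(C))\subset A$.
\item The boundary of $C$ is a product of commutators in $A$, hence is killed. So Step~2 applies to $\hat C$ and gives a handle inside $C\subset\Sigma_{h,1}$ on which $\rho$ is trivial.
\end{enumerate}
This is a one-holed version of Theorem~\ref{thm:dunfield-thurston-uniform}, valid for $h\geq J(J+R+2)$. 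Iterating it gives both your base case and the splitting at each induction step. Your recursion then goes through with $M(m,0)$ of order $J(m)^2$ instead of $J(m)+m+1$. The resulting $T(m)$ is correspondingly larger than the value $2m(1+m+J(m))$ stated in the paper.
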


Here, $\rho$ is redundant if there is a decomposition $ S_{g} = S_{g_1} \# S_{g_2} $ such that $\rho(\pi_1(S_{g_1}^*))$ is dense in $\rho(\pi_1(S_{g}))$, where $S_{g_1}^*$ is the punctured surface.
Once Theorem~\ref{thm:dunfield-thurston-uniform} is at our disposal, the corollary can be derived exactly as  Theorem~\ref{thm: rdu}. Indeed, the only place in this proof where Nielsen moves are used in a non-trivial way is to deal with the discrete part $G^\#$ of $G$. The constant $T(m)$ that we get is $2m(1+ m + J(m))$.

\begin{proof}[Proof of Theorem~\ref{thm:dunfield-thurston-uniform}]
	Let
	$ 0\to A \to G \to Q\to 1$
	be an
	exact sequence
	for $ G $
	with $ A $ abelian of rank $ \leq R $
	and $ Q $ of cardinal $\leq  J $.
	
	{\textit{Step 1.--}} Composing $ \rho $
	with the quotient map
	$ G \to Q $
	we obtain
	$ \bar{\rho} : \pi_1(S_g) \to Q $.
	Applied to
	$ \bar{\rho} $,
Theorem~\ref{thm:dunfield-thurston} 	gives a decomposition
	\[ S_{g} = S_{g-1} \# T \]
	which induces a decomposition
	of the fundamental group
	$ \pi_1(S_{g}) =
	\pi_1(S_{g-1}^{*})
	\star_{\gamma} 
	\pi_1(T^{*})
	$,
	where $ T^{*} $ is a one-holed torus
	and
	$ S_{g-1}^{*} $ is a one-holed
	genus $ g-1 $ surface,
	such that
	the restriction of $ \bar{\rho} $
	to
	$\pi_1(T^{*})$
	is trivial.
	This means that
	the restriction of $ \rho $
	to
	$\pi_1(T^{*})$
	takes values in the abelian group $ A $.
	In particular, the curve $ \gamma $ along which
	$ S_{g-1} $ and $ T $ are attached,
	which is equal to the commutator of the generators
	of
	$\pi_1(T^{*})$,
	is mapped by $ \rho $ to the identity in $ G $.
	We deduce that
	$ \rho $
	induces two representations
	$ \rho : \pi_1(S_{g-1}) \to G $
	and
	$ \rho : \pi_1(T) \to A $.

	The same process can be applied to
	$ \rho : \pi_1(S_{g-1}) \to G $.
	Iterating this process $ R+1 $ times,
	we obtain a decomposition 
	$ S_{g} = S_{g-R-1} \# S_{R+1} $
	such that $ \rho $
	induces representations
	$ \rho : \pi_1(S_{g-R-1}) \to G $
	and
	$ \rho : \pi_1(S_{R+1}) \to A $.

	{\textit{Step 2.--}} It is now sufficient to restrict the study to the abelian part: 
	we can assume that
	$ \rho : \pi_1(S_{g}) \to A $
	takes values in an abelian group
	and $ g \ge R+1 $. The representation factorizes through the abelianization
	and can be identified with a vector $ v \in H^{1}(S_{g}; A) $ (i.e.\ $v \in \Rep(S_g; A)$).
	Via the choice of a symplectic basis,
	we identify
	$ H^{1}(S_{g}; A) $
	with
	$ A^{2g} $.
	The action of the mapping class group
	$ \Mod_{g} $ 
	of $ S_{g} $
	on
	$ H^{1}(S_{g}; A)$
	factorizes through the
	action of
	$ \Sp_{2g}(\Z) $
	on
	$ A^{2g} $.
	Recall that the morphism
	$\Mod_{g} \to \Sp_{2g}(\Z) $
	is surjective
	(see Theorem~6.44 in~\cite{FarbMargalit}).
	We will show that there exists
	$ M \in \Sp_{2g}(\Z) $
	such that the last two coordinates
	of
	$ Mv $ are null.
	This will imply that there exists
	a one-holed torus in $ S_{g} $
	in restriction to which
	$ \rho $ is trivial.

	\begin{lem}
		The symplectic group
		$ \Sp_{2g}(\Z) $
		acts transitively
		on primitive vectors
		in $ \Z^{2g} $.
	\end{lem}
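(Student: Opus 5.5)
We prove that for every primitive vector $v\in\Z^{2g}$ there is $M\in\Sp_{2g}(\Z)$ with $Mv=e_1$, where $(e_1,f_1,\ldots,e_g,f_g)$ is the standard symplectic basis with $\omega(e_i,f_i)=1$. The plan is to use the elementary generators of $\Sp_{2g}(\Z)$ as a symplectic substitute for Nielsen moves, and to run a Euclidean algorithm that decreases the size of the nonzero coordinates of $v$.

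\emph{Step 1: the moves.} We record the symplectic moves we will use.
\begin{enumerate}[\rm (i)]
\item Inside each hyperbolic plane $\langle e_i,f_i\rangle$, the group $\Sp_2(\Z)=\SL_2(\Z)$ acts. It contains the transvections $x\mapsto x+k\,\omega(x,e_i)e_i$ and $x\mapsto x+k\,\omega(x,f_i)f_i$.
\item Between two planes $i\neq j$, the transvection $T_u(x)=x+\omega(u,x)u$ with $u=e_i+e_j$ or $u=f_i+e_j$ is symplectic. It couples coordinates of different planes.
\item The block permutations of the planes are symplectic.
\end{enumerate}

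\emph{Step 2: concentrate each plane.} Write $v=\sum_i(a_ie_i+b_if_i)$ with $\gcd$ of all the $a_i,b_i$ equal to $1$. Since $\SL_2(\Z)$ acts transitively on vectors of $\Z^2$ with given $\gcd$ (Euclid's algorithm), we apply in each plane a matrix of $\SL_2(\Z)$ sending $(a_i,b_i)$ to $(c_i,0)$ with $c_i=\gcd(a_i,b_i)$. After this, $v=\sum c_ie_i$ with $\gcd(c_1,\ldots,c_g)=1$.

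\emph{Step 3: Euclid across planes.} This is the only step needing care. First we create an $f$-component: using the transvection in plane $j$, we can turn $c_je_j$ into $c_je_j+c_jf_j$. Then we apply $T_u$ with $u=e_i+e_j$. Since $\omega(u,\cdot)$ picks up $f$-coordinates, this adds a multiple of $c_j$ to the $e_i$-coordinate. After re-cleaning plane $j$ with $\SL_2(\Z)$, we have effectively performed $c_i\mapsto c_i\pm c_j$ while keeping the shape $\sum c_ie_i$. This pairwise reduction must be checked explicitly on the coordinates of the planes $i$ and $j$; it is a routine $4\times4$ computation. Iterating Euclid's algorithm, we reach $v=e_1$, using block permutations if needed.

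Applied to the vector $v\in A^{2g}$ of Step~2 of the theorem, this lemma gives the transitivity we need, much as Corollary~\ref{cor:lubotzky} does in the free group setting.
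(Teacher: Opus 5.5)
Your proof is correct. Your Step~2 is the paper's first reduction: its ``we can assume $w=\sum_i a_iu_i$'' comes from applying $\SL_2(\Z)$ in each hyperbolic plane, exactly as you do. The two arguments differ only in the cross-plane step.

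The paper settles that step in one stroke. For $M\in\SL_g(\Z)$, the block matrix $\mathrm{diag}(M,{}^t\!M^{-1})$, written in the basis $(u_1,\ldots,u_g,v_1,\ldots,v_g)$, is symplectic. It preserves the Lagrangian spanned by the $u_i$ (your $e_i$) and acts on it by $M$. Transitivity of $\SL_g(\Z)$ on primitive vectors of $\Z^g$ then concludes.

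You instead simulate the elementary operations $c_i\mapsto c_i+kc_j$ by composing a transvection in plane $j$, the transvection $T_{e_i+e_j}$, and a re-cleaning of plane $j$. The computation you call routine does go through. Since it is the only step with real content, you should write it out:
\begin{itemize}
\item From $c_ie_i+c_je_j$, the plane-$j$ transvection with multiplier $k$ gives $c_ie_i+c_je_j+kc_jf_j$.
\item On this vector $\omega(e_i+e_j,\cdot)$ equals $kc_j$, so $T_{e_i+e_j}$ produces $(c_i+kc_j)e_i+(1+k)c_je_j+kc_jf_j$.
\item The plane-$j$ part $\bigl((1+k)c_j,kc_j\bigr)$ has gcd $\vert c_j\vert$. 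Hence $\SL_2(\Z)$, which contains $-\Id$, brings it back to $c_je_j$.
\end{itemize}

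The detour is avoidable. In the ordering $(e_1,\ldots,e_g,f_1,\ldots,f_g)$, the matrix $\mathrm{diag}(\Id+kE_{ij},{}^t(\Id+kE_{ij})^{-1})$, with $E_{ij}$ the elementary matrix, performs $c_i\mapsto c_i+kc_j$ directly on vectors $\sum_l c_le_l$, with no re-cleaning. Your route does have one small merit. Every move you use is a product of symplectic transvections $T_u$ with $u$ primitive, i.e.\ images of Dehn twists up to inverses. So in the application to $S_g$ the moves can be realized explicitly, without appealing to the surjectivity of $\Mod_g\to\Sp_{2g}(\Z)$.

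One minor imprecision in your closing sentence: the lemma is not applied to $v\in A^{2g}$ itself. As in the paper, it is applied successively to the primitive part of a lift to $\Z^{2g}$ of each block $p_i(v)$, restricted to the remaining planes.
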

	\begin{proof}[Proof (see Lemma~5.3 in~\cite{Benoist:Integral_Symplectic})]
	Let $(u_1, \ldots, u_g, v_1, \ldots, v_g)$ be the ca\-nonical symplectic basis of $\Z^{2g}$,
	the symplectic form being given by $u_1^*\wedge v_1^* + \ldots +u_g^*\wedge v_g^*$.
	Let $w$ be a primitive vector in $\Z^{2g}$. If $g=1$, there is an element of 
	$\Sp_2(\Z)=\SL_2(\Z)$ that maps $w$ to $u_1$ (by Bézout's theorem). Thus, we can assume
	that $w=\sum_i a_i u_i$ for some primitive vector $w_0=(a_1, \ldots, a_g)\in \Z^{g}$. Since 
	$\SL_g(\Z)$ acts transitively on primitive vectors, there is a matrix $M\in \SL_g(\Z)$ such that
	$Mw_0=u_1$; the matrix ${\mathrm{diag}}(M, ^t\!M^{-1})$ is in $\Sp_{2g}(\Z)$ and maps $w$ to~$u_1$.
	\end{proof}
	
	Let us decompose $ A $
	as a sum of cyclic groups
	$ A =  A_{1} \oplus \cdots \oplus A_{r} $ with $r\leq R$,
	and  denote by
	$ p_{i} $ the  projection onto $A_i$.
	We can write $ v $ as a $ 2g \times R $ matrix:
	$$
	\begin{bmatrix}
		p_1(v_1) & \cdots & p_{R} (v_1) \\
		\vdots & \ddots & \vdots \\
		p_1(v_{2g} ) & \cdots & p_{R} (v_{2g})
	\end{bmatrix}
	$$

	Lifting $ p_1(v) \in A_1^{2g}  $ to $ \Z^{2g}  $,
	 the lemma provides
	$ M_1 \in \Sp_{2g}(\Z) $
	such that $ M_1 p_1(v) = (\star,\star, 0, \dots, 0) $,
	hence $ M_1 v $ can be written as a block matrix:
	$$
	\left[
	\begin{array}{c|ccc}
		\star & \star & \cdots & \star \\
		\star & \star & \cdots & \star \\
		\hline
		0 & \star & \star & \star \\
		\vdots &    \vdots & \vdots &  \vdots \\
		0 & \star & \star & \star
	\end{array}
	\right]
	$$
	Using an element of
	$ \Sp_{2g}(\Z) $ of the form
	$ \begin{bmatrix}
		\Id_2 & 0 \\
		0 & M_2 
	\end{bmatrix} $,
	we can apply the same process
	to the bottom-right matrix,
	which is in
	$ (A_{2} \oplus \cdots \oplus A_{R})^{2(g-1)} $.
	After $ r $ iteration,
	$ v $ is sent
	by an element of
	$ \Sp_{2g}(\Z) $
	to a vector with its last two coordinates null.
\end{proof}

\bibliographystyle{plain}

\bibliography{referencesRedon}

\end{document}